\journal{Nonlinear Analysis: Hybrid Systems}
\newtheorem{thm}{Theorem}
\newtheorem{lem}{Lemma}
\newtheorem{cor}{Corollary}
\newtheorem{alg}{Algorithm}
\newtheorem{assumption}{Assumption}
\newtheorem{definition}{Definition}
\newtheorem{problem}{Problem}
\newcommand{\new}[1]{\ifthenelse{\boolean{color-new}}%
  {{\color{blue} #1}}{#1}}
\DeclareMathAlphabet{\mathpzc}{OT1}{pzc}{m}{it}
\DeclareFontFamily{U}{msb}{}
\DeclareFontShape{U}{msb}{m}{n}{ <5> <6> <7> <8> <9> gen * msbm
<10> <10.95> <12> <14.4> <17.28> <20.74> <24.88> msbm10}{}
\DeclareSymbolFont{AMSb}{U}{msb}{m}{n}
\DeclareMathSymbol{\Reals}{\mathalpha}{AMSb}{'122}
\DeclareMathSymbol{\Naturals}{\mathalpha}{AMSb}{'116}
\DeclareMathSymbol{\Knumbers}{\mathalpha}{AMSb}{'113}
\DeclareMathSymbol{\Rationals}{\mathalpha}{AMSb}{'121}
\DeclareSymbolFont{AMSb}{U}{msb}{m}{n}
\DeclareMathSymbol{\setB}{\mathalpha}{AMSb}{'102}
\DeclareMathSymbol{\setC}{\mathalpha}{AMSb}{'103}
\DeclareMathSymbol{\setD}{\mathalpha}{AMSb}{'104}
\DeclareMathSymbol{\setE}{\mathalpha}{AMSb}{'105}
\DeclareMathSymbol{\setF}{\mathalpha}{AMSb}{'106}
\DeclareMathSymbol{\setI}{\mathalpha}{AMSb}{'111}
\DeclareMathSymbol{\setK}{\mathalpha}{AMSb}{'113}
\DeclareMathSymbol{\setM}{\mathalpha}{AMSb}{'115}
\DeclareMathSymbol{\setN}{\mathalpha}{AMSb}{'116}
\DeclareMathSymbol{\setP}{\mathalpha}{AMSb}{'120}
\DeclareMathSymbol{\setQ}{\mathalpha}{AMSb}{'121}
\DeclareMathSymbol{\setR}{\mathalpha}{AMSb}{'122}
\DeclareMathSymbol{\setS}{\mathalpha}{AMSb}{'123}
\DeclareMathSymbol{\setT}{\mathalpha}{AMSb}{'124}
\DeclareMathSymbol{\setU}{\mathalpha}{AMSb}{'125}
\DeclareMathSymbol{\setV}{\mathalpha}{AMSb}{'126}
\DeclareMathSymbol{\setW}{\mathalpha}{AMSb}{'127}
\DeclareMathSymbol{\setX}{\mathalpha}{AMSb}{'130}
\DeclareMathSymbol{\setY}{\mathalpha}{AMSb}{'131}
\DeclareMathSymbol{\setZ}{\mathalpha}{AMSb}{'132}
\begin{document}

\begin{frontmatter}

\title{Projection-Based Iterative Mode Scheduling for Switched Systems} %

\author{T. M. Caldwell\corref{mycorrespondingauthor}}\ead{caldwelt@u.northwestern.edu}
\author{T. D. Murphey}\ead{t-murphey@northwestern.edu}
\address{Mechanical Engineering\\Northwestern University\\Evanston IL 60208}
\cortext[mycorrespondingauthor]{Corresponding author}

\begin{abstract}
This paper describes a method for scheduling the events of a switched system to achieve an optimal performance.  The approach has guarantees on convergence and computational complexity that parallel derivative-based iterative optimization but in the infinite dimensional, integer constrained setting of mode scheduling.  In comparison to methods relying on mixed integer programming, the presented approach does not require a priori discretizations of time or state.  Furthermore, in comparison to embedding and relaxation methods, every iteration of the algorithm returns a dynamically feasible solution.  A large class of problems call for optimal mode scheduling. 
\new{This paper considers a vehicle tracking problem and a high dimensional multimachine power network synchronization problem. For the power network example, }
both single horizon and receding horizon approaches prevent instability of the network, and the receding horizon approach does so at near real-time speeds on a single processor.
\end{abstract}

\begin{keyword}
Optimal control\sep Switched-mode systems\sep Power network regularization\sep Mode scheduling
\end{keyword}

\end{frontmatter}


\section{Introduction}
Optimally scheduling the distinct dynamic modes of a switched system is not a combinatoric problem when using variational techniques.  Instead of discretizing in state or time a priori and applying mixed integer programming or combinatoric searches \cite{bemporad_borrelli_morari,caines_shaikh_2005,gorges_izak_liu,seatzu_corona_etal}, we remove integer constraints and calculate local variations in the resulting unconstrained set.  The locally varied solution is projected back to the set of dynamically feasible trajectories.  This work is an extension from \cite{caldwell_murphey_acc12,caldwell_murphey_cdc12} where we show that iteratively stepping in the direction of the local variation and projecting the result is absolutely continuous in the step size.   That absolute continuity result is needed for the Armijo-like line search presented here to have a sufficient descent property for convergence.
\new{This paper is a more complete presentation of \cite{caldwell_murphey_acc12,caldwell_murphey_cdc12}, as well as our preliminary optimal mode scheduling work in \cite{caldwell_murphey_cdc13}.
}

\new{
There are two main themes to numerical mode scheduling: averaging methods \cite{bengea_decarlo, vasudevan_etal}, and insertion methods \cite{axelsson_wardi_et_al,egerstedt_wardi_axelsson,gonzalez_vasudevan_etal,wardi_egerstedt_hale,wardi_egerstedt_acc12}. Averaging methods directly handle chattering of optimal mode scheduling solutions by modulating an embedding of the control authority by averaging the time spent in each mode. Insertion methods use needle variations to iteratively update the control schedule in a manner that both reduces the cost and guarantees convergence to a local infima. As seen in the examples in \cite{wardi_egerstedt_hale} and here, insertion methods rapidly descend the cost function in a few iterations when starting from an initial guess far from a local infima.

Our approach is an insertion method. We apply a projection operator in a similar manner to gradient projection methods for finite dimensional inequality constrained optimization \cite{nocedal_wright} and optimal control of trajectory functionals \cite{hauser,hausermeyer}.  We locally search an unconstrained space quantifying the search through a cost function composed with the projection. As such, our approach always returns a dynamically feasible solution at each step of the optimization. In comparison to embedding methods, we never relax the problem statement.

Like insertion methods \cite{axelsson_wardi_et_al,egerstedt_wardi_axelsson,gonzalez_vasudevan_etal,wardi_egerstedt_hale,wardi_egerstedt_acc12}, we use the mode insertion gradient to iteratively update the switching control (see \cite{axelsson_wardi_et_al,egerstedt_wardi_axelsson,gonzalez_vasudevan_etal,wardi_egerstedt_hale,wardi_egerstedt_acc12} and our review in Section~\ref{sec-MIG} for a description of the mode insertion gradient). In comparison, we utilize the mode insertion gradient differently.  Other insertion methods use the mode insertion gradient to determine an insertion time and mode and conduct a line search on the insertion duration, or corresponding Lebesgue measure, to update the switching control. This approach was limited to a single mode insertion per iteration in \cite{gonzalez_vasudevan_etal}, but \cite{wardi_egerstedt_hale,wardi_egerstedt_acc12} extended this approach by conducting a line search on the full Lebesgue measure of many possible insertions in a novel way. Our approach is most similar to \cite{wardi_egerstedt_hale,wardi_egerstedt_acc12} but it differs in that we treat the mode insertion gradient as Lebesgue integrable curves that act as local variations in the unconstrained space and project steps in the direction of the local variation to feasible switching controls. While this process is similar to \cite{wardi_egerstedt_hale,wardi_egerstedt_acc12} in that it results in a change of Lebesgue measure, our line search steps directly in the unconstrained control space while \cite{wardi_egerstedt_hale,wardi_egerstedt_acc12} steps the line search in the feasible control space. Whereas our line search consists of choosing a distance and projecting, \cite{wardi_egerstedt_hale,wardi_egerstedt_acc12} require additional algorithmic complexity to determine the switching control for the desired Lebesgue measure. Additionally, the equations in Section~\ref{sec-suffdesc} for the line search include both the mode insertion gradient and projection, making the role of the curvature (and higher order derivatives) of the mode insertion gradient explicit.

The mode insertion gradient as a local variation is not an actual gradient since the set of valid variations that guarantee a feasible projection do not form a Hilbert space.  Despite this fact, our projection-based mode scheduling does parallel iterative optimization techniques based on differentiability; each iterate steps in a descent direction of size given by backtracking as part of a line search, which sufficiently reduces the cost for guarantees on convergence.  The primary objective of this paper is to show that the same procedures from derivative-based optimization are valid for mode scheduling even though mode scheduling is an infinite dimensional, non-smooth problem.  In other words, in the context of mode scheduling, we prove convergence properties for the procedure of: 1) calculating a descent direction, 2) taking a step of size calculated using backtracking, 3) updating, and repeating.
}

Explicitly, the contributions of this paper are the following.
\begin{enumerate}[(A)]
\item A local approximation of the cost function in the direction of the negative mode insertion gradient for use in iterative optimization. 
\item Showing the negative mode insertion gradient is a descent direction for the projection-based framework.
\item A test for sufficient descent.
\item Showing conditions under which backtracking calculates a step size which satisfies sufficient descent in a finite number of iterations.
\end{enumerate}
The local approximation of the cost (A) in the direction of the local variation is needed to prove the (C), and (D) contributions, while (B) is a consequence of our choice of projection. Our analysis concludes by finding guarantees for convergence.

This paper is organized as follows: Section \ref{sec-review} reviews switched systems, the projection-based optimal mode scheduling problem, the switching time gradient, and the mode insertion gradient. Section~\ref{sec-approx_cost_ST} provides and analyzes the local approximation of the cost, which is Contribution A. Section~\ref{sec-approx_cost_ST} additionally shows that the negative mode insertion gradient is a descent direction, Contribution B.  Section~\ref{sec-suffdesc} proposes both a sufficient descent condition and a backtracking algorithm and proves convergence, Contributions C and D.  Section~\ref{sec-mode_sched} presents the full mode scheduling algorithm and discusses implementation issues. Finally, Section~\ref{sec-eg} applies mode scheduling to two examples. The first is a vehicle tracking problem with control authority constrained to four modes. The second is a disturbance response for the IEEE 118 Bus Test Case multimachine power network \cite{christie}, which is composed of 54 generators, 108 states, 118 buses and 186 lines. 

A notation table is in the Appendix and unless otherwise stated the lemma proofs are in the Appendix.

\section{Review and Introductory Results}
\label{sec-review}
The following reviews switching control of switched systems \cite{caldwell_murphey_acc12,caldwell_murphey_cdc12}, the switching time gradient \cite{caldwell_murphey_journal,egerstedt_wardi_axelsson,johnson_murphey_tac,xu_antsaklis_2}, the mode insertion gradient \cite{egerstedt_wardi_axelsson,gonzalez_vasudevan_etal,wardi_egerstedt_hale,wardi_egerstedt_acc12}, the max-projection operator for switched systems \cite{caldwell_murphey_acc12,caldwell_murphey_cdc12}, and the projection-based optimal mode scheduling problem \cite{caldwell_murphey_acc12,caldwell_murphey_cdc12}.  


\subsection{Switched Systems}
The evolution of a switched system over the bounded time interval $[0,T]$, $T>0$ depends on a set of distinct modes. Supposing there are $N$ such modes describing the system's motion, label them $f_i:\setR^n\rightarrow\setR$, $i\in\{1,\ldots,N\}$. At any time $t\in[0,T]$, the evolution depends only on a single mode---i.e. $\dot{x}(t) = f_i(x(t))$ for one $i\in \{1,\ldots,N\}$. The times when the system transitions from one mode to another are referred to as \emph{switching times} and are the times $t\in[0,T]$ for which $\dot{x}(t^-) = f_i(x(t^-))$ but $\dot{x}(t^+) = f_j(x(t^+))$ where $i\neq j\in \{1,\ldots,N\}$ and `$\cdot^+$' is the limit from the right and `$\cdot^-$' is the limit from the left.  Label $M-1$ as the total number of switching times.

We consider two equivalent representations to parameterize a switched system: mode schedule, and switching control.  Both representations play a critical role in the mode scheduling algorithm. The mode schedule is a natural way to specify the control policy, while the variations for numerical iterative descent depend on the switching control representation. 

The mode schedule specifies which mode dictates the system evolution at any given time. 
\begin{definition}\label{def-sched}
A \emph{mode schedule} is composed of the pair $(\Sigma,\mathcal{T})$ where $\Sigma = [\sigma_1,\ldots,\sigma_M]$ is the mode sequence and $\mathcal{T} = [T_1,\ldots,T_{M-1}]$ is the strictly monotonically increasing vector of switching times.  Each mode is $\sigma_i\in\{1,\ldots,N\}$, each switching time is $T_i\in[0,T]$, and the total number of elements in the mode sequence is $M\in\setN$.
\end{definition}
The switching control signal $u$ defines a mode schedule through a piecewise constant signal.  This representation enables taking variations in the control, including changing the order of modes.
\begin{definition}\label{def-omega}
The curve $u = [u_1,\ldots,u_N]^T$ composed of $N$ piecewise constant functions of time is a \emph{switching control} if 
\begin{itemize}
\item for almost every $t\in[0,T]$, $\sum_{i=1}^N u_i(t) = 1$, and 
\item for each $i\in\{1,\ldots,N\}$, $u_i(t)\in\{0,1\}$.
\end{itemize}
\end{definition}

A unique mapping exists between both representations.  Given a mode schedule, $(\Sigma,\mathcal{T})$, the switching control $u$ is $u(t) = e_{\sigma_i}$ for $t\in[T_{i-1},T_i)$, $i=1,\ldots,M$ where $T_0 = 0$, $T_M = T$ and $e_{\sigma_i}$ is the $\sigma_i^{\textrm{th}}$ column of the $N$ dimensional identity matrix. Additionally, given a switching control $u$, the mode schedule is $(\Sigma,\mathcal{T}) = ([\sigma_1,\ldots,\sigma_M],[T_1,\ldots,T_{M-1}])$ where $\mathcal{T} = \{t\in(0,T)|u(t^+)\neq u(t^-)\}$ and $e_{\sigma_i} = u(t)$ for $t\in[T_{i-1},T_i)$, $i = 1,\ldots,M$.

Each parameterization is identified as chattering or non-chattering depending on the number of switching times. Define \emph{non-chattering} as:

\begin{definition}\label{def-non-chattering}
A mode schedule $(\Sigma,\mathcal{T})$ is \emph{non-chattering} when there is a $\delta t>0$ such that every active mode dwells for at least $\delta t$ time---i.e. $|T_{i} - T_j|>\delta t$ for each adjacent pair $T_{i}$ and $T_j\in\mathcal{T}$. 
\end{definition}

Since the time interval $[0,T]$ is bounded, the number of switching times, $M-1$, is finite. We say a switching control is non-chattering if the corresponding mode schedule is non-chattering. Label the set of all \emph{non-chattering switching controls} as $\Omega$.  Therefore, any $u\in\Omega$ switches values at most a finite number of times in $[0,T]$. We embed $\Omega$ in the space of Lebesgue integrable functions from $[0,T]$ to $\setR^N$, labelled $\mathcal{U}$, for performing addition and scalar multiplication operations.

A \emph{switched system trajectory} is the state and the switching control,  $(x,u)$---alternatively, $(x,\Sigma,\mathcal{T})$---that satisfies the state equations.  Here, $x$ is assumed to be an element of $\mathcal{X}$, the space of Lebesgue integrable functions from $[0,T]$ to $\setR^n$. If, as above, the switched system has state $x$ and switching control $u$, then the state equations are
\begin{equation}
\dot{x}(t) = F(x(t),u(t)) := \sum_{i=1}^Nu_i(t)f_i(x(t)),\hspace{10pt}x(0) = x_0.
\label{eq-state}
\end{equation}
The corresponding formal definition of a switched system trajectory is:
\begin{definition}\label{def-S}
The pair $(x,u)\in\mathcal{X}\times\mathcal{U}$ is a \emph{feasible switched system trajectory} if
\begin{itemize}
\item $u\in\Omega$ (i.e. $u$ is a non-chattering switching control) and
\item $x(t)-x(0)-\int_0^tF(x(\tau),u(\tau))d\tau = 0$ for almost all $t\in[0,T]$.\footnote{The integral is the Lebesgue integral.}
\end{itemize}
Denote the set of all such pairs of state and switching controls by $\mathcal{S}$.
\end{definition}


\subsection{Optimal Mode Scheduling Problem}
Define the objective function as 
\[
J(x,u) = \int_0^T\ell(x(\tau))d\tau
\]
where the running cost, $\ell:\setR^n\rightarrow\setR$ is $\mathcal{C}^2$ in $\setR^n$.  (We include the control $u\in\mathcal{U}$ in the definition of the objective because it is a design variable in the following optimization problem.)
\new{\begin{problem}
\label{prob-const}
Find the infimum of the cost $J$ with respect to $x$ and $u$ under the constraint that $x$ and $u$ constitute a feasible switched system trajectory\textemdash i.e. $(x,u)\in\mathcal{S}$:
\[
\inf_{(x,u)\in\mathcal{S}}J(x,u).
\]
\end{problem}
Sequences of non-chattering switching controls, $\{u^k\}\in\Omega$, can converge to chattering switching controls \cite{bengea_decarlo} necessitating pursuing an infimum. We tackle the problem by generating a sequence of non-chattering switched system trajectories, $\{(x^0,u^0),(x^1,u^1),(x^2,u^2),\ldots\}$ for which $\lim_{k\rightarrow \infty}J(x^k,u^k) = J^\star$. 
}

The mode scheduling algorithm in this paper provides a method for generating such a sequence and gives conditions which guarantee that $J(x^k,u^k)$ converges to $J^\star$ (while, at every iteration, $(x^k,u^k)\in\mathcal{S}$).  Since the proposed approach is an iterative descent one, the infimum $J^\star$ might not be the cost's global lower bound. Therefore, solutions to Problem~\ref{prob-const} can only be argued as locally infimal in general, which is often the case for non-convex iterative optimization. 


\subsection{Switching Time Gradient}
The problem of optimizing the switching times when the mode sequence is fixed is considered in \cite{caldwell_murphey_journal,egerstedt_wardi_axelsson,flasskamp_murphey_oberblobaum,johnson_murphey_tac,xu_antsaklis_2}. 
Suppose for a given switching control, $u$, the corresponding mode schedule is $(\Sigma,\mathcal{T})$. 
\new{
Additionally, suppose the cost associated with $(\Sigma,\mathcal{T})$ is $J(\Sigma,\mathcal{T}):=\int_0^T\ell(x(\tau))d\tau$, where $x$ is the solution to the state equations Eq.~(\ref{eq-state}), and 
$\ell$ and $f_i$, $i\in\{1,\ldots,N\}$ are $\mathcal{C}^1$ in $\setR^n$. 
}
Then, the $i^\textrm{th}$ switching time derivative of the cost is (\cite{caldwell_murphey_journal,egerstedt_wardi_axelsson,flasskamp_murphey_oberblobaum,johnson_murphey_tac,xu_antsaklis_2})
\begin{equation}
\frac{\partial}{\partial T_i}J(x,u) = \rho^T(T_i)(f_{\sigma_i}(x(T_i))-f_{\sigma_{i+1}}(x(T_i)))
\label{eq-stgrad}
\end{equation}
where $x$ is the solution to the state equations, Eq.~(\ref{eq-state}), and $\rho$ is the solution to the following adjoint equation\footnote{$D$ is the partial derivative with respect to the only argument. When a function has multiple arguments, the argument slot is specified. For example, $Dg(a) = \frac{\partial}{\partial a} g(a)$, $D_1g(a,b) = \frac{\partial}{\partial a} g(a,b)$, and $D_2g(a,b) = \frac{\partial}{\partial b} g(a,b)$.}
\begin{equation}
\begin{array}{c}
\dot{\rho}(t) = -Df_{\sigma_i}(x(t))^T\rho(t)-D\ell(x(t))^T,\\T_{i-1}<t<T_{i}\hspace{10pt}\textrm{for }i\in\{1\ldots,M\}
\end{array}
\label{eq-rhodot}
\end{equation}
where $\rho(T) = 0$. We call $\frac{\partial}{\partial \mathcal{T}}J(x,u):=[\frac{\partial}{\partial T_1}J(x,u),\ldots,\frac{\partial}{\partial T_{M-1}}J(x,u)]^T$ the switching time gradient \cite{caldwell_murphey_journal,egerstedt_wardi_axelsson,flasskamp_murphey_oberblobaum,johnson_murphey_tac,xu_antsaklis_2}.  The adjoint equation Eq.~(\ref{eq-rhodot}) plays an important role in the mode insertion gradient, discussed next.


\subsection{Mode Insertion Gradient}
\label{sec-MIG}
For projection-based switched system optimization, the cost does not have a gradient in the same sense that differentiable functions in an inner product space have a gradient.  However, the cost does have a function with a similar role in the optimization as the gradient plays in finite dimensional smooth optimization.  This function is the \emph{mode insertion gradient} \cite{egerstedt_wardi_axelsson,gonzalez_vasudevan_etal,wardi_egerstedt_hale,wardi_egerstedt_acc12}. 

\new{
The mode insertion gradient measures the cost's sensitivity to inserting a mode at a time $t\in[0,T]$ for an infinitesimal interval. Suppose the current switched system is $(x,u)\in\mathcal{S}$ where $u$ corresponds to schedule $(\Sigma, \mathcal{T})$ such that $\Sigma = [\sigma_1,\ldots,\sigma_M]$ and $\mathcal{T} = [T_1,\ldots,T_{m-1}]$. Let $(\Sigma_{\lambda,a,t},\Sigma_{\lambda,a,t})$ be the mode schedule after inserting mode $a\in\{1,\ldots,N\}$ at time $t\in[0,T]$ for short time duration $\lambda>0$. Supposing the insertion occurs between switching times $T_{i-1}$ and $T_i$, $i\in\{1,\ldots,M\}$, then $\Sigma_{\lambda,a,t} = [\sigma_1\ldots,\sigma_i,a,\sigma_i,\ldots,\sigma_M]$ and $\mathcal{T}_{\lambda,a,t} = [T_1,\ldots,T_{i-1},t, t+\lambda,T_i,\ldots,T_{m-1}]$. The mode $a$ insertion gradient at time $t$, labelled $d_a(t)$, is defined as 
\begin{equation}
\label{eq-insertion_gradient_def}
d_a(t) := \left.\frac{\partial}{\partial \lambda} J(\Sigma_{\lambda,a,t},\mathcal{T}_{\lambda,a,t})\right|_{\lambda\rightarrow 0^+} = \left. D_2J(\Sigma_{\lambda,a,t},\mathcal{T}_{\lambda,a,t})\frac{\partial}{\partial \lambda} \mathcal{T}_{\lambda,a,t}\right|_{\lambda\rightarrow 0^+}
\end{equation}
where $D_2J(\Sigma_\lambda,\mathcal{T}_\lambda)$ is the switching time gradient, Eq.~(\ref{eq-stgrad}) and as such, 
\[
d_a(t) = \rho(t+\lambda)^T (f_{a}(x(t+\lambda))-f_{\sigma_{i}}(x(t+\lambda)))|_{\lambda\rightarrow 0^+}
\]
where $\rho$ is the solution to the adjoint equation Eq.~(\ref{eq-rhodot}). Since $F(x(t),u(t)) = f_{\sigma_{i+1}}(x(t))$,
\begin{equation}
d_a(t) := \rho(t)^T(f_a(x(t)) - F(x(t),u(t))).
\label{eq-insertion_gradient}
\end{equation}
By the definition of the mode $a$ insertion gradient, Eq.~(\ref{eq-insertion_gradient_def}), if $d_a(t)<0$, then there is a $\lambda>0$ such that $J(\Sigma_{\lambda,a,t},\mathcal{T}_{\lambda,a,t}) < J(\Sigma, \mathcal{T})$. Thus, the condition $d_a(t)<0$ guarantees descent in cost is possible, specifically by inserting mode $a$ for a sufficiently short duration at time $t$. 
}

Since the mode $a$ insertion gradient can be calculated for each $t\in[0,T]$ and mode $a\in\{1,\ldots,N\}$, define $d:[0,T]\rightarrow\setR^N$ as the 
\new{\emph{mode insertion gradient} of $J$ at $(x,u)$.}\footnote{In this paper the mode insertion gradient is $d$, an $N$-dimensional list of curves, while in \cite{egerstedt_wardi_axelsson,gonzalez_vasudevan_etal,wardi_egerstedt_hale,wardi_egerstedt_acc12} the mode insertion gradient is $d_a(t)$, the evaluation of $d$ for the $a^\textrm{th}$ mode at time $t$.} That is, $d(t) = [d_1(t),\ldots,d_N(t)]^T$.  

Define the mode insertion gradient difference of mode $a\in\{1,\ldots,N\}$ with that of mode $b\in\{1,\ldots,N\}$ as $d_{ab}(t):=d_a(t)-d_b(t) = \rho(t)^T(f_a(x(t)) - f_b(x(t)))$.\footnote{We use the double index as it is here to represent the difference of the first index with the second throughout the paper.}
In Section~\ref{sec-type2_suffdesc}, the proof of sufficient descent relies on the assumption that $\ddot{d}_{ab}(t):=\ddot{d}_a(t)-\ddot{d}_b(t)$ is Lipschitz continuous over any time interval bounded by adjacent switching times. In order to make such a claim, we make the following assumptions on each vector field $f_i$ and the running cost $\ell$:
\begin{assumption}
\label{ass-C2bnd}
Assume for every $x(t)\in\setR^n$
\begin{enumerate}
\item for each $i\in\{1,\ldots,N\}$, $f_i(x(t))$ is $\mathcal{C}^2$ and there exists $K_2>0$ such that $\|D^2f_i(x(t))\|\leq K_2$, and
\item $\ell(x(t))$ is $\mathcal{C}^2$ and there exists $\overline{K}_2>0$ such that $\|D^2\ell(x(t))\|\leq\overline{K}_2$.
\end{enumerate}
\end{assumption}
In Assumption~\ref{ass-C2bnd}.1, $\|\cdot\|$ is an operator norm on the space of real $n \times n \times n$ operators, while in Assumption~\ref{ass-C2bnd}.2, $\|\cdot\|$ is an operator norm on the space of real $n \times n$ operators. 

With Assumption~\ref{ass-C2bnd}, we can guarantee the existence and uniqueness of both $x$, the solution to the state equation Eq.~(\ref{eq-state}), and $\rho$, the solution to the adjoint equation Eq.~(\ref{eq-rhodot}), for $u\in\Omega$ using Theorem 3.2 in \cite{khalil}. The existence and uniqueness of $x$ and $\rho$ are useful for proving the following Lemma that guarantees Lipschitz continuity of $\ddot{d}_{ab}(t)$. 

\begin{lem}[Lipschitz condition for $\ddot{d}_{ab}(t)$]
\label{lem-lipschitz}
Suppose $u\in\Omega$ and is constant in the time interval $(\tau_1,\tau_2)$, $\tau_1<\tau_2\in[0,T]$, and $d$ is the mode insertion gradient of $J$ calculated from $(x,u)$.   There exists an $L>0$ such that for each $a\neq b\in\{1,\ldots, N\}$ and $t_1,t_2\in(\tau_1,\tau_2)$, 
\[
|\ddot{d}_{ab}(t_2) - \ddot{d}_{ab}(t_1)| \leq L|t_2-t_1|.
\]
\end{lem}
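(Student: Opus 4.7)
The plan is to view $\ddot d_{ab}(t)$ on the open interval $(\tau_1,\tau_2)$, where $u$ is constant and equal to some $e_\sigma$, as a smooth functional of $(x(t),\rho(t))$, and then to bound its oscillation in $t$ using the state/adjoint equations together with Assumption~\ref{ass-C2bnd}. Because $u(t)=e_\sigma$ throughout $(\tau_1,\tau_2)$, the equations $\dot x = f_\sigma(x)$ and $\dot\rho = -Df_\sigma(x)^T\rho - D\ell(x)^T$ hold with a single, fixed mode, so $x(\cdot),\rho(\cdot)\in C^2((\tau_1,\tau_2))$ and $d_{ab}(t)=\rho(t)^T(f_a(x(t))-f_b(x(t)))$ is twice continuously differentiable in $t$.

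First I would differentiate $d_{ab}$ twice by the chain rule, substituting the state and adjoint ODEs to eliminate $\dot x$ and $\dot\rho$. This produces an explicit expression $\ddot d_{ab}(t)=G_{a,b,\sigma}(x(t),\rho(t))$, where $G_{a,b,\sigma}$ is linear in $\rho$ and polynomial in $\{f_i(x), Df_i(x), D^2f_i(x):i\in\{a,b,\sigma\}\}$ and $\{\ell(x),D\ell(x),D^2\ell(x)\}$. Next I would collect the boundedness ingredients: because $(x,u)\in\mathcal S$ on the compact interval $[0,T]$, existence and uniqueness (the paper's appeal to \cite{khalil}) guarantees that $x(\cdot)$ and $\rho(\cdot)$ remain bounded on $[0,T]$. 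Hence $f_i(x(t)),Df_i(x(t)),\ell(x(t)),D\ell(x(t))$ are bounded along the trajectory, while $D^2f_i(x(t))$ and $D^2\ell(x(t))$ are globally bounded by $K_2$ and $\overline K_2$. Moreover, the bounds on the second derivatives make $f_i, Df_i, \ell, D\ell$ Lipschitz in $x$, which, composed with the Lipschitz map $x(\cdot)$, yields Lipschitz-in-$t$ estimates for each of them on $(\tau_1,\tau_2)$.

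Finally I would bound $|\ddot d_{ab}(t_2)-\ddot d_{ab}(t_1)|$ by expanding the difference $G_{a,b,\sigma}(x(t_2),\rho(t_2))-G_{a,b,\sigma}(x(t_1),\rho(t_1))$ as a telescoping sum of single-factor differences, then applying the triangle inequality and bounding each single-factor difference by the product of the remaining uniformly bounded factors and a Lipschitz-in-$t$ bound for the varying factor. Summing contributions produces a Lipschitz constant $L_{a,b,\sigma}$ that depends only on the $C^2$-bounds of Assumption~\ref{ass-C2bnd} and the sup-norms of $x,\rho$ over $[0,T]$; taking $L:=\max_{a\ne b,\sigma}L_{a,b,\sigma}$ (a finite maximum, since $N$ is finite) yields the uniform constant required by the lemma.

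The main obstacle is bookkeeping: $\ddot d_{ab}$ is a sum of many cross-terms produced by two successive applications of the product and chain rules, and each must be shown to be Lipschitz in $t$. The subtlest pieces are the terms carrying $D^2f_i(x(t))$ and $D^2\ell(x(t))$: for these to be Lipschitz (rather than merely continuous) in $t$, one needs the map $x\mapsto D^2f_i(x)$ (and likewise for $\ell$) to be Lipschitz on the range of $x(\cdot)$. This is a mild strengthening of Assumption~\ref{ass-C2bnd} and follows automatically whenever $f_i,\ell$ are $C^3$ with bounded third derivatives on the bounded trajectory tube; once this regularity is secured, the rest of the estimate assembles routinely by the triangle inequality.
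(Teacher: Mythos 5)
Your proposal is correct and follows essentially the same route as the paper's proof: differentiate $d_{ab}(t)=\rho(t)^T(f_a(x(t))-f_b(x(t)))$ twice, substitute the state and adjoint equations, bound every factor using Assumption~\ref{ass-C2bnd} together with existence/uniqueness and boundedness of $x$ and $\rho$ on $[0,T]$, and assemble the Lipschitz constant by a telescoping/triangle-inequality argument on the product terms. The subtlety you flag---that the terms carrying $D^2f_i(x(t))$ and $D^2\ell(x(t))$ require $x\mapsto D^2f_i(x)$ to be Lipschitz on the trajectory tube, a mild strengthening of Assumption~\ref{ass-C2bnd}---is genuine, and the paper's own proof tacitly relies on the same property when it concludes that $\ddot\rho(t;u)$ is Lipschitz in $t$ from boundedness of $D_1^2F$ alone.
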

The proof of Lemma~\ref{lem-lipschitz} is in Appendix~\ref{app-lipschitz}.

From Lemma~\ref{lem-lipschitz}, we see that $\ddot{d}_{ab}$ is piecewise Lipschitz for any $u\in\Omega$. Take some $u\in\Omega$ and let $T_1,\ldots,T_{M-1}$ be the switching times in its mode schedule representation.  Then, $u(t)$ is constant for any $t\in(T_i,T_{i+1})$, $i=0,\ldots,M-1$, and therefore, $\ddot{d}_{ab}$ is Lipschitz over each interval $(T_i,T_{i+1})$. 

We never calculate the Lipschitz constant $L$ but its existence is needed to approximate the cost in the direction of the negative mode insertion gradient (see Section~\ref{sec-approx_cost_ST}) and to provide a region of step sizes for which the line search proposed in Section~\ref{sec-suffdesc} satisfies steepest descent.


\subsection{Projection Operator\label{sec-proj}}
We wish to project curves $(\alpha,\mu) \in \mathcal{X}\times\mathcal{U}$ to $(x,u) \in \mathcal{S}$ so that at every step of an iterative mode scheduling optimization guarantees feasible $(x,u)$. To construct an appropriate choice of projection, we first define the mapping $\mathcal{Q}:\mathcal{U}\rightarrow \mathcal{U}$, where the $i^\textrm{th}$ element of $\mathcal{Q}(\mu(t))$, $\mu\in\mathcal{U}$ is defined as (where $\mu_{ij} = \mu_i-\mu_j$):
\begin{equation}
\mathcal{Q}_i(\mu(t)) := \prod^N_{j\neq i}1(\mu_{ij}(t)).
\label{eq-maxProjStep}
\end{equation}
Here $1:\setR\rightarrow\{0,1\}$ is the step function\textemdash i.e. $1(\mu_{ij}(t)) = 0$ if $\mu_{ij}(t)<0$ and $1(\mu_{ij}(t)) = 1$ if $\mu_{ij}(t)\geq 0$.  Note that this mapping always returns a vector of ones and zeros.  

The purpose of the mapping is to project elements of $\mathcal{U}$ to $\Omega$. However, the mapping will not always return a switching control. For example, suppose $N=2$ and $\mu_1(t)=\mu_2(t)$ for a connected interval of $t$. Then, $\mathcal{Q}(\mu(t)) = [1, 1]^T$ for that interval and thus $\mathcal{Q}(\mu)\not\in\Omega$.  For this reason, we only apply $\mathcal{Q}$ to the subset $\mathcal{R}\subset\mathcal{U}$, where $\mathcal{R}$ is defined as the pre-image $\mathcal{Q}^{-1}(\mathcal{S})$.  

In this paper, $\mu$ will always have the form $\mu = u + \gamma v$ where $u\in\Omega$, $\gamma\in\setR^+$ and $v \in \mathcal{U}$.  With the following assumption on $v$, Lemma 1 in \cite{caldwell_murphey_cdc12} guarantees that $u + \gamma v\in\mathcal{R}$.
\begin{assumption}\label{ass-v_crit}
Assume $v = [v_1,\ldots,v_N]^T\in\mathcal{U}$ is piecewise continuous in $[0,T]$ such that for each $i\neq j \in \{1,\ldots,N\}$, $v_i-v_j$ has a finite number of critical points\footnote{A critical point is a point $t$ of a real valued function $v$ in which either $\dot{v}(t)=0$ or $v$ is not differentiable at $t$.} in $[0,T]$.
\end{assumption}
The choice of $v$ used in this paper is the negative mode insertion gradient $v = -d$.  Since $d_{ab} := d_a - d_b = \rho(t)^T(f_a(x(t))-f_b(x(t)))$, $a\neq b\in\{1,\ldots,N\}$, Assumption~\ref{ass-v_crit} in part requires the modes to be distinct on connected time intervals. It may be possible to design the system and pick a control so that $d_{ab}$ is constant for a connected time interval, which we treat as a degeneracy for the purposes of this paper. \new{Due to the possible scenario that $d$ is such that $\mu = u-\gamma d\not\in\mathcal{R}$, we apply Assumption~\ref{ass-v_crit} to the negative mode insertion gradient, $-d$. }

The max-projection is defined using $\mathcal{Q}$ as:
\begin{definition}\label{def-P}
Take $(\alpha,\mu)\in\mathcal{X}\times\mathcal{R}$. The \emph{max-projection}, $\mathcal{P}:\mathcal{X}\times\mathcal{R}\rightarrow \mathcal{S}$, at time $t\in[0,T]$ is 
\begin{equation}
\mathcal{P}(\alpha(t),\mu(t)):=\left\{\begin{array}{l}
\dot{x}(t) = F(x(t),u(t)),\hspace{10pt} x(0) = x_0 \\
u(t) = \mathcal{Q}(\mu(t)).
\end{array}\right.
\label{eq-maxP}
\end{equation}
\end{definition}
The max-projection is a projection---i.e. $\mathcal{P}(\mathcal{P}(\alpha,\mu)) = \mathcal{P}(\alpha,\mu)$ for all $(\alpha,\mu)\in\mathcal{X}\times\mathcal{R}$---according to Lemma 1 of \cite{caldwell_murphey_acc12}. Notice, since the max-projection does not depend on $\alpha$, we occasionally write $\mathcal{P}(\mu)$.  We include the unconstrained state in the definition in order for $\mathcal{P}$ to be a projection.  (Other projections proposed in \cite{caldwell_murphey_acc12} do depend on $\alpha$.)

\subsection{Projection-Based Optimal Mode Scheduling}
\label{sec-proj_ms}

Problem~\ref{prob-const} provides the mode scheduling optimal control problem where the optimization is constrained to $(x,u) \in \mathcal{S}$.  With the definition of $\mathcal{P}$ in Definition~\ref{def-P}, we pose an alternative problem statement that removes the constraints to feasible switched system trajectories so that optimization may be performed over the \emph{unconstrained} $(\alpha,\mu) \in \mathcal{X} \times \mathcal{R}$.
\new{
\begin{problem}
\label{prob-unconst}
Find the infimum of the cost $J$ composed with the projection $\mathcal{P}$ with respect to $(\alpha,\mu)\in\mathcal{X}\times\mathcal{R}$:
\[
\inf_{(\alpha,\mu)\in\mathcal{X}\times\mathcal{R}}J(\mathcal{P}(\alpha,\mu)).
\]
\end{problem}
}
Since $\mathcal{P}$ is a projection, Problem~\ref{prob-unconst} is equivalent to Problem~\ref{prob-const}. To see this, note that for any sequence $(\alpha^k,\mu^k)\in\mathcal{X}\times\mathcal{R}$,  $\{J(\mathcal{P}(\alpha^k,\mu^k))\}=\{J(x^k,u^k)\}$ where $(x^k,u^k) = \mathcal{P}(\alpha^k,\mu^k)\in\mathcal{S}$.  Likewise, for any sequence $(x^k,u^k)\in\mathcal{S}$,  $\{J(x^k,u^k)\}$ $= \{J(\mathcal{P}(\alpha^k,\mu^k))\}$ where $(\alpha^k,\mu^k)$ is in the pre-image of $\mathcal{P}(x^k,u^k)$.  Note the pre-image of $\mathcal{P}$ is nonempty because $(x,u) = \mathcal{P}(x,u)$.

In this paper, we solve Problem~\ref{prob-unconst} by generating a sequence $\{x^k,u^k\}\in\mathcal{X}\times\mathcal{R}$ for which $\lim_{k\rightarrow \infty}J(\mathcal{P}(x^k,u^k)) = J^\star$,  where $J^\star$ is a local infimum of Problem~\ref{prob-unconst}. 
Specifically, for each $k$, we compute an update $(x^k,u^k)\rightarrow (x^{k+1},u^{k+1})$ of the form:
\begin{equation}
(x^{k+1},u^{k+1}) = \mathcal{P}(x^k,u^k-\gamma^k d^k)
\label{eq-update}
\end{equation}
starting with $u^0\in\Omega$.  Here, $\gamma^k\in\setR^+$ and $d^k$ is the mode insertion gradient Eq. (\ref{eq-insertion_gradient}) calculated from $u^k$.  From Assumption~\ref{ass-v_crit}, we know that $u^k-\gamma^k d^k\in\mathcal{R}$ and therefore the mapping $\mathcal{Q}:\mathcal{R}\rightarrow\Omega$ is well defined. With this procedure, although each pair $(x^k,u^k-\gamma^k d^k)$ is an element of $\mathcal{X}\times\mathcal{R}$,  $\mathcal{P}(x^k,u^k-\gamma^k d^k)$ is an element of $\mathcal{S}$, and so every iteration is a feasible (non-chattering) switched system trajectory.

The goal is for the sequence of costs $\{J(x^k,u^k)\}$ to converge to a local infimum. Similar to derivative-based iterative optimization (see \cite{armijo,nocedal_wright,kelley}) we need to guarantee a) that a step in the search direction $d^k$ exists that reduces the cost and b) that convergence of $\{x^k,u^k\}$ coincides with $J(x^k,u^k) - J^\star\rightarrow 0$---i.e. that sequence convergence implies that the infimum is found. To guarantee a), we show in Section~\ref{sec-descdir} that the negative mode insertion gradient $-d$ is a descent direction---i.e. that for sufficiently small $\gamma^k$, $J(\mathcal{P}(x^k,u^k-\gamma^k d^k))<J(x^k,u^k)$. As for b), we show in Section~\ref{sec-type2_suffdesc} that there is a connected interval of step sizes which guarantee sufficient descent. Furthermore, in Section~\ref{sec-backtracking}, we provide a means to calculate a step size of sufficient descent using backtracking and provide bounds on the number of backtracking steps required. 

The convergence to an infimum is argued through the optimality conditions from the Hybrid Maximum Principle.

\subsection{Optimality Condition}
Through the hybrid maximum principle \cite{riedinger_kratz_etal,riedinger_iung_etal} expressed for Problem~\ref{prob-const}, we can specify an optimal condition---i.e. an equality $\theta(x^\star,u^\star) = 0$ necessary for the switched system $(x^\star,u^\star)$ to be a solution to Problem~\ref{prob-const} where we define $\theta$ shortly. \new{We use $\theta$ as in \cite{polak_wardi}, as an optimality function}. The condition $\theta=0$ assumes that the cost is so that a feasible (non-chattering) switched system trajectory optimizes the problem. When an optimal switched system trajectory exists, a sequence $\{x^k,u^k\}$ for which $\theta(x^k,u^k)\rightarrow 0$ implies $(x^k,u^k)\rightarrow (x^\star,u^\star)$. When an optimum does not exist, the infimum can only be pursued at the limit as $k\rightarrow \infty$.

The maximum principle expressed for Problem~\ref{prob-const} is as follows, where the Hamiltonian is $H(\rho, \rho_0,x,u,t) := \rho(t)^TF(x(t),u(t)) + p_0\ell(x(t))$ (see \cite{riedinger_kratz_etal} Theorem 1):
\begin{thm}[Switched system maximum principle]
\label{thm-mp}
If $(x^\star,u^\star)\in\mathcal{S}$ is an optimal feasible (non-chattering) switched system trajectory, then there exists an absolutely continuous curve $\rho^\star$ and constant $\rho_0^\star\geq 0$ such that
\begin{enumerate}
\item $\dot{x}^\star(t) = \frac{\partial}{\partial \rho}H(\rho^\star, \rho_0^\star,x^\star,u^\star,t)^T$,
\item $\dot{\rho}^\star(t) = -\frac{\partial}{\partial x}H(\rho^\star, \rho_0^\star,x^\star,u^\star,t)^T$, and
\item $H(\rho^\star, \rho_0^\star,x^\star,u^\star,t) = \min_{\sigma\in\{1,\ldots,N\}}H(\rho^\star, \rho_0^\star,x^\star,e_\sigma,t)$.
\end{enumerate}
\end{thm}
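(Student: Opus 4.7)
My plan is to leverage the mode insertion gradient, which has already been introduced in Section~\ref{sec-MIG}, because it literally is the first-order effect of a needle variation on the cost. This means the bulk of the maximum principle can be obtained essentially by rereading Eqs.~(\ref{eq-insertion_gradient_def})--(\ref{eq-insertion_gradient}) through the lens of the Hamiltonian. Specifically, I would take $\rho_0^\star = 1$ (the normal case, which is consistent here because there are no endpoint constraints beyond $x(0)=x_0$) and define $\rho^\star$ as the unique absolutely continuous solution of the adjoint equation Eq.~(\ref{eq-rhodot}) driven by $(x^\star,u^\star)$ with terminal condition $\rho^\star(T)=0$. Existence, uniqueness, and absolute continuity of $\rho^\star$ follow from Assumption~\ref{ass-C2bnd} via the same Theorem 3.2 of \cite{khalil} invoked just before Lemma~\ref{lem-lipschitz}.

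Conditions 1 and 2 are then verifications rather than theorems. Since $H(\rho,\rho_0,x,u,t)=\rho^T F(x,u)+\rho_0\ell(x)$, we have $\partial H/\partial \rho = F(x^\star,u^\star)^T$, which together with Eq.~(\ref{eq-state}) gives Condition 1, and $\partial H/\partial x = \rho^{\star T}D_1 F(x^\star,u^\star) + \rho_0^\star D\ell(x^\star)$, which on each mode interval $(T_{i-1}^\star,T_i^\star)$ equals $\rho^{\star T}Df_{\sigma_i^\star}(x^\star)+D\ell(x^\star)$, matching the negative of $\dot{\rho}^\star$ in Eq.~(\ref{eq-rhodot}). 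The heart of the argument is Condition 3, which I would prove by contradiction. Suppose that at some interior point $t^\star$ of a mode interval, there is a mode $a$ with $H(\rho^\star,\rho_0^\star,x^\star,e_a,t^\star) < H(\rho^\star,\rho_0^\star,x^\star,u^\star,t^\star)$. Cancelling the common $\rho_0^\star\ell(x^\star(t^\star))$ and rearranging gives $\rho^\star(t^\star)^T(f_a(x^\star(t^\star))-F(x^\star(t^\star),u^\star(t^\star))) < 0$, which is exactly $d_a(t^\star)<0$ by Eq.~(\ref{eq-insertion_gradient}). By Eq.~(\ref{eq-insertion_gradient_def}), this produces a $\lambda>0$ for which the inserted schedule $(\Sigma_{\lambda,a,t^\star},\mathcal{T}_{\lambda,a,t^\star})$ has strictly smaller cost than $(\Sigma^\star,\mathcal{T}^\star)$. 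The inserted schedule is still non-chattering and feasible, contradicting optimality of $(x^\star,u^\star)$. Hence $d_a(t^\star)\geq 0$ for every $a$ and every such $t^\star$, which is precisely Condition 3.

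The main obstacle is extending the pointwise condition from the interior of each mode interval to the full measure-one set required by the maximum principle, and justifying the normalization $\rho_0^\star=1$. For the former, the switching times form a finite set and contribute measure zero, while absolute continuity of $\rho^\star$ and continuity of $f_i$, $\ell$ guarantee that the Hamiltonian inequality passes to left/right limits at switches; this together with Definition~\ref{def-sched} (the switching times are a finite, strictly increasing sequence) is enough to cover $[0,T]$ almost everywhere. For the latter, one would have to argue that the abnormal case $\rho_0^\star=0$ does not arise: in the free-endpoint setting considered here, $\rho_0^\star=0$ would force $\rho^\star\equiv 0$ via the adjoint equation with $\rho^\star(T)=0$ and $\ell$-independent dynamics, which trivializes Condition 3 into an empty statement and is therefore harmless to include or exclude. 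A secondary, more technical obstacle is confirming that the needle variation used in Eq.~(\ref{eq-insertion_gradient_def}) indeed produces a \emph{non-chattering} comparison trajectory, so that the contradiction lands inside $\mathcal{S}$ rather than merely in its closure; this is guaranteed because adding two new switching times to a finite schedule still yields a finite, strictly increasing sequence of switches.
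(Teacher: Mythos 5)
Your proposal is essentially correct, but it takes a genuinely different route from the paper: the paper does not prove Theorem~\ref{thm-mp} at all, it simply imports it as Theorem 1 of \cite{riedinger_kratz_etal} (the hybrid maximum principle) and only proves the downstream Corollary~\ref{cor-opt_cond}. What you have done is give a self-contained needle-variation proof specialized to this free-endpoint, finite-mode-set Lagrange problem: Conditions 1 and 2 become definitional once $\rho_0^\star=1$ and $\rho^\star$ is taken to be the adjoint solution of Eq.~(\ref{eq-rhodot}), and Condition 3 follows by observing that its violation at an interior point of a mode interval is exactly $d_a(t^\star)<0$, which by Eq.~(\ref{eq-insertion_gradient_def}) yields a feasible, non-chattering competitor of strictly lower cost. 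This buys a more elementary and transparent argument (and makes explicit why the abnormal case $\rho_0^\star=0$ is vacuous here), at the price of leaning on the one-sided differentiability of $\lambda\mapsto J(\Sigma_{\lambda,a,t},\mathcal{T}_{\lambda,a,t})$ at $\lambda=0^+$, i.e.\ the mode insertion gradient identity itself, which the paper also takes from the literature (\cite{egerstedt_wardi_axelsson} et al.) rather than proving; so your proof is self-contained only modulo that same imported fact. The citation route, by contrast, covers more general hybrid settings (endpoint constraints, state-dependent switching costs) where the normalization $\rho_0^\star=1$ and the simple adjoint construction would not suffice. Your handling of the measure-zero switching set and of the non-chattering feasibility of the perturbed schedule is correct and closes the gaps one would worry about.
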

In the maximum principle 1) requires the optimal trajectory must satisfy the state equation Eq.~(\ref{eq-state}) while 2) requires the curve $\rho^\star$ is the solution to the adjoint equation Eq.~(\ref{eq-rhodot}).  Additionally, 3) requires that the Hamiltonian for the optimal mode has least value compared to all other switching controls---recall $e_\sigma$ is the $\sigma^{\textrm{th}}$ column of the $N$ dimensional identity matrix. In general, direct synthesis of the optima is impossible since the three requirements form a boundary value problem which are commonly solved through iterative approaches like the one in this paper.

Requirement 3) can be written in a familiar manner through the mode insertion gradient where, as seen in the following corollary to Theorem~\ref{thm-mp}, the optimality condition is a function of the lower bound on the mode insertion gradient. Define $\theta$ as this lower bound for some switched system $(x,u)\in\mathcal{S}$: \new{First, set $\sigma'\in\{1,\ldots,N\}$ and $T'\in[0,T]$ as the index and timing of the lower bound of $d$:
\begin{equation}
(\sigma', T') = \arg \min_{\sigma\in\{1,\ldots,N\},t\in[0,T]} d_{\sigma}(t).
\label{eq-insertion_mode_time}
\end{equation}
Then,
\begin{equation}
\theta(x,u) := d_{\sigma'}(T')
\label{eq-thetacalc}
\end{equation}
This $\sigma'$ and $T'$ are important as they will be the mode and timing of the initial insertion of $\mathcal{Q}(u-\gamma d)$ as $\gamma$ increases from 0. Additionally, $(\sigma', T')$ may not be unique, but that does not affect the value of $\theta$.
}
\begin{cor}[Optimality condition]
\label{cor-opt_cond}
The switched system $(x^\star,u^\star)$ and cost $J$ with mode insertion gradient $d^\star$ is an optimal feasible (non-chattering) switched system trajectory if $\theta = 0$.
\end{cor}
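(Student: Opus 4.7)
The plan is to reduce the corollary to condition 3 of the Hybrid Maximum Principle (Theorem~\ref{thm-mp}) by algebraically identifying the Hamiltonian difference with the mode insertion gradient. Conditions 1 and 2 of Theorem~\ref{thm-mp} already coincide (under the implicit normalization $\rho_0^\star=1$) with the paper's state equation Eq.~(\ref{eq-state}) and adjoint equation Eq.~(\ref{eq-rhodot}), so the only nontrivial work is translating the pointwise Hamiltonian-minimization condition 3 into the single-scalar statement $\theta(x^\star,u^\star)=0$.

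The key algebraic step is the identity
\begin{equation*}
H(\rho^\star,\rho_0^\star,x^\star,e_\sigma,t) - H(\rho^\star,\rho_0^\star,x^\star,u^\star,t) = \rho^{\star T}(t)\bigl(f_\sigma(x^\star(t)) - F(x^\star(t),u^\star(t))\bigr) = d^\star_\sigma(t),
\end{equation*}
in which the $\rho_0^\star\,\ell(x^\star(t))$ terms cancel and the right-hand side is exactly the mode insertion gradient of Eq.~(\ref{eq-insertion_gradient}). Condition 3 of Theorem~\ref{thm-mp} therefore reads $d^\star_\sigma(t)\geq 0$ for every $\sigma\in\{1,\ldots,N\}$ and almost every $t\in[0,T]$. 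Since the active mode contributes $d^\star_{\sigma_i}(t)=\rho^{\star T}(t)(f_{\sigma_i}(x^\star(t))-f_{\sigma_i}(x^\star(t)))=0$ on each interval $(T_{i-1},T_i)$, the infimum in Eq.~(\ref{eq-insertion_mode_time}) is both bounded below by $0$ and attained at $0$, so $\theta(x^\star,u^\star)=0$ by Eq.~(\ref{eq-thetacalc}). Conversely, if $\theta(x^\star,u^\star)=0$ then $d^\star_\sigma(t)\geq 0$ a.e., which recovers condition 3 and, combined with Eq.~(\ref{eq-state}) and Eq.~(\ref{eq-rhodot}), the full set of necessary conditions of Theorem~\ref{thm-mp}.

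The main obstacle is the abnormal multiplier. Theorem~\ref{thm-mp} only guarantees $\rho_0^\star\geq 0$, and if $\rho_0^\star=0$ the Hamiltonian cancellation above still goes through, but the adjoint Eq.~(\ref{eq-rhodot}) (which carries the $D\ell^T$ forcing) no longer matches the MP adjoint, so the correspondence between $\rho^\star$ and the paper's $\rho$ breaks. I would therefore carry out the argument under the normal-case assumption $\rho_0^\star>0$ and rescale to $\rho_0^\star=1$; this is the standard regime and is implicit in the paper's earlier definitions. A secondary technicality is that condition 3 holds only almost everywhere and $u^\star$ is discontinuous on a measure-zero set of switching times, but this is harmless: on every positive-measure subinterval $(T_{i-1},T_i)$ the value $d^\star_{\sigma_i}(t)=0$ is available as a witness, so the infimum in Eq.~(\ref{eq-insertion_mode_time}) is genuinely attained at zero rather than merely approached.
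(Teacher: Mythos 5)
Your proof is correct and follows essentially the same route as the paper's: identify the Hamiltonian difference in condition 3 of Theorem~\ref{thm-mp} with the mode insertion gradient $d^\star_\sigma(t)$ and conclude that condition 3 is equivalent to $\theta=0$. Your added observations---that the active mode always supplies the witness $d^\star_{\sigma_i}(t)=0$ so the infimum is genuinely attained at zero, and that the normal-case normalization $\rho_0^\star=1$ is implicitly assumed---are details the paper's proof glosses over, but they do not change the argument.
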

\begin{proof}
Requirement 3) of Theorem~\ref{thm-mp} is equivalent to: For each $\sigma \in\{1,\ldots,N\}$ and for each $t\in[0,T]$,
\[
H(\rho^\star, \rho_0^\star,x^\star,e_\sigma,t) - H(\rho^\star, \rho_0^\star,x^\star,u^\star,t) \geq 0.
\]
Plugging in for the definition of the Hamiltonian, the left hand side of the inequality is the $\sigma^\textrm{th}$ mode insertion gradient and so for each $\sigma \in\{1,\ldots,N\}$ and for each $t\in[0,T]$,
\[
\rho^\star(t)^T[f_\sigma(x(t)) - F(x(t),u(t))] = d^\star_\sigma(t) \geq 0
\]
Taking the lower bound on $d^\star_\sigma(t)$ for each mode $\sigma$ and time $t$, claim 3) of Theorem~\ref{thm-mp} is equivalent to $\theta= 0$. 
\end{proof}
Through Corollary~\ref{cor-opt_cond}, convergence of a sequence $\{(x^k,u^k)\}$ to an infimizing switched system trajectory is indicated by convergence of the optimality function $\theta(x^k,u^k)$ to 0.

\section{Local Approximation of the Cost}
\label{sec-approx_cost_ST}
The goal of the iterative update Eq.~(\ref{eq-update}) is to generate a sequence of switched systems $\{(x^k,u^k)\}$ with costs $\{J(x^k,u^k)\}$ that converge to a local infimal cost, in order to solve Problem~\ref{prob-unconst} while ensuring that every iterate is in $\mathcal{S}$ (thus enabling applications like receding horizon control, as shown in the examples in Section~\ref{sec-eg}).  In derivative-based optimization the update and convergence guarantees are based on local approximations.  For instance, gradients and Hessians are solutions to local quadratic models \cite{zeidler}. For Problem~\ref{prob-unconst}, the set over which the optimization is occurring  is $\mathcal{X}\times\mathcal{R}$, but $\mathcal{R}$ is not a Hilbert space even when coupled with an inner product. In fact, $\mathcal{R}$ is not a vector space since it does not contain the origin. Even if $\mathcal{R}$ were an inner product space, it would not be complete. Fortunately, though, as we find in this section, the cost can still be approximated in the direction $-d$.

\subsection{Initial Update in the Direction $-d$}
To determine how the iterative update in Eq.~(\ref{eq-update}) varies with $\gamma$, fix $(x,u)\in\mathcal{S}$ and calculate $d$ from Eq.~(\ref{eq-insertion_gradient}). The updated switching control is $\mathcal{Q}(u-\gamma d)$. This section shows that the update is unchanging near $\gamma = 0$.  In other words, there is a $\overline{\gamma}>0$ such that for every $\gamma\in[0,\overline{\gamma})$, $\mathcal{Q}(u-\gamma d) = u$.  In the following lemma we not only show that $\overline{\gamma}$ exists, but calculate the upper bound $\overline{\gamma}$, labelled $\gamma_0$, for a given $d$.  Since $\gamma_0$ is this upper bound, the projection must change for $\gamma$ just right of $\gamma_0$---i.e. $\mathcal{Q}(u-\gamma_0^+ d) \neq u$. 
\new{
Since the projected switching control returns the modes with maximum corresponding value in $u-\gamma d$ for each time, $\gamma_0$ depends on the lower bound of $d$, Eq.~(\ref{eq-insertion_mode_time}), and therefore the value of $\gamma_0$ directly depends on the optimality function $\theta$, Eq.~(\ref{eq-thetacalc}).}
\begin{lem}[$\gamma_0$]\label{lem-gamma0}
For switched system trajectory $(x,u)$ and cost $J$, if $\theta:=\theta(x,u) = 0$, then $\mathcal{Q}(u-\gamma d) = u$ for all $\gamma>0$. Otherwise, the value 
\begin{equation}
\gamma_0 =-\frac{1}{\theta}
\label{eq-gamma0k}
\end{equation}
is such that $\mathcal{Q}(u-\gamma d) = u$ for all $\gamma\in[0,\gamma_0)$ and $\mathcal{Q}(u-\gamma_0^+ d)\neq u$.
\end{lem}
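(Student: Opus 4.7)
The plan rests on the key observation that for the active mode at time $t$ (say $u(t) = e_{\sigma_i}$), the mode insertion gradient satisfies $d_{\sigma_i}(t) = \rho(t)^T(f_{\sigma_i}(x(t)) - F(x(t),u(t))) = 0$, since $F(x(t),u(t)) = f_{\sigma_i}(x(t))$. Consequently, at each time $t$ the components of $u(t) - \gamma d(t)$ read $1$ at the active mode slot and $-\gamma d_j(t)$ at every inactive slot $j\neq\sigma_i$.

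Next I would translate the condition $\mathcal{Q}(u-\gamma d) = u$ into a pointwise inequality. By Eq.~(\ref{eq-maxProjStep}) and the $\geq$-convention in the step function, $\mathcal{Q}_{\sigma_i}(\mu(t))=1$ and $\mathcal{Q}_j(\mu(t))=0$ for every $j\neq\sigma_i$ if and only if $1 > -\gamma d_j(t)$ for every $j\neq\sigma_i$; equality would create a tie and produce a vector with two entries equal to $1$ (hence outside $\Omega$). Since $d_{\sigma_i}(t) = 0$, the inequality $1 > -\gamma d_j(t)$ can also be written as $\gamma d_\sigma(t) > -1$ for every $\sigma\in\{1,\ldots,N\}$ and every $t\in[0,T]$, so only those $(\sigma,t)$ with $d_\sigma(t)<0$ can be binding.

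The two cases then fall out of this uniform condition combined with the definition $\theta = \min_{\sigma,t} d_\sigma(t)$ from Eq.~(\ref{eq-insertion_mode_time})--(\ref{eq-thetacalc}). If $\theta = 0$ then $d_\sigma(t)\geq 0$ everywhere, so $\gamma d_\sigma(t)\geq 0 > -1$ trivially for any $\gamma>0$, giving $\mathcal{Q}(u-\gamma d) = u$ for all $\gamma>0$. If $\theta < 0$, set $\gamma_0 = -1/\theta$; for any $\gamma\in[0,\gamma_0)$ the bound $d_\sigma(t)\geq\theta$ yields $\gamma d_\sigma(t)\geq\gamma\theta > \gamma_0\theta = -1$ uniformly in $(\sigma,t)$, so $\mathcal{Q}(u-\gamma d) = u$. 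Conversely, at the minimizer $(\sigma', T')$ attaining $\theta$, any $\gamma>\gamma_0$ gives $-\gamma d_{\sigma'}(T') > 1$, so mode $\sigma'$ strictly dominates the active mode at $T'$ and $\mathcal{Q}(u-\gamma d)(T') = e_{\sigma'}\neq u(T')$ (note $\sigma'\neq\sigma_i$ at $T'$ since $d_{\sigma_i}(T')=0\neq\theta$).

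The only subtlety is ensuring that the minimum in Eq.~(\ref{eq-insertion_mode_time}) is attained so that the uniform-in-$t$ inequality $\gamma d_\sigma(t)>-1$ on $[0,\gamma_0)$ is genuine rather than merely an infimum that might fail to be strict. This follows because $\rho$ is absolutely continuous, $x$ is continuous, and $u\in\Omega$ is piecewise constant with finitely many jumps on the compact interval $[0,T]$, so each $d_\sigma$ is piecewise continuous with finitely many pieces and the minimum is achieved on each closed subinterval. This compactness-plus-piecewise-continuity argument is the main technical step worth being careful about; once it is in hand, both directions of the statement follow from the pointwise comparison above.
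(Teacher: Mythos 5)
Your proposal is correct and follows essentially the same route as the paper's proof: both arguments hinge on $d_{\sigma_i}(t)=0$ for the active mode, reduce $\mathcal{Q}(u-\gamma d)=u$ to the pointwise inequality $\gamma d_a(t) > -1$, and then invoke the definition of $\theta$ as the lower bound of $d$ to split the two cases. Your added remarks on ties at $\gamma=\gamma_0$ and on attainment of the minimum via piecewise continuity on $[0,T]$ are sensible refinements of details the paper passes over, but they do not change the substance of the argument.
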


The significance of $\gamma_0$ is that it is the lower bound of $\setR^+$ for which the update $\mathcal{Q}(u-\gamma d)$ is useful. Therefore, we use $\gamma_0$ as the lower bound on the line search in the iterative procedure to solve Problem~\ref{prob-unconst} (see Section~\ref{sec-suffdesc}). Additionally, when an optimum is found---i.e. when $\theta = 0$---Lemma \ref{lem-gamma0} indicates that the projected control is unchanged for any $\gamma>0$.

\subsection{Derivative of the Cost in the Direction $d$ Almost Everywhere}
As $\mathcal{Q}(u-\gamma d)$ varies with $\gamma$, both the switching times $\mathcal{T}$ and the mode sequence $\Sigma$ in the updated mode schedule vary. However, $\Sigma$ will not vary for all $\gamma>\gamma_0$. Define $\Gamma$ as the $\gamma\in\setR^+$ where the mode sequence $\Sigma$ varies:
\begin{equation}
\begin{array}{l}
\Gamma := \{\gamma \in \setR^+| \forall \delta \gamma>0, \exists \gamma'\in(\gamma-\delta\gamma,\gamma+\delta\gamma)\cap\setR^+, \\\hspace{20pt}
\textrm{ where } \Sigma(\mathcal{Q}(u-\gamma d))\neq\Sigma(\mathcal{Q}(u-\gamma' d))\}. 
\end{array}
\label{eq-Gamma}
\end{equation}
For all $\gamma\not\in\Gamma$ only the switching times vary. 
\new{
From Lemma 3 in \cite{caldwell_murphey_cdc12}, due to Assumption~\ref{ass-v_crit}, the cardinality of $\Gamma$ is finite and so $\Gamma$ has zero measure on $\setR^+$. 
}
Define $\Sigma(\gamma) :=\Sigma(\mathcal{Q}(u-\gamma d)) = [\sigma_1,\ldots,\sigma_{M}]$ and $\mathcal{T}(\gamma):=\mathcal{T}(\mathcal{Q}(u-\gamma d)) = [T_1(\gamma),\ldots,T_{M-1}(\gamma)]$ as the updated mode sequence and switching times at $\gamma\not\in\Gamma$. The cost parameterized by the mode schedule is 
\[
J(\Sigma(\gamma),\mathcal{T}(\gamma)):= J(\mathcal{P}(u-\gamma d)).
\]
The derivative of the cost with respect to $\gamma\neq\Gamma$ is
\begin{equation}
\frac{\partial}{\partial \gamma}J(\Sigma(\gamma),\mathcal{T}(\gamma)) =D_2J(\Sigma(\gamma),\mathcal{T}(\gamma)) D\mathcal{T}(\gamma)
\label{eq-DJgamma}
\end{equation}
where $D_2J(\Sigma(\gamma),\mathcal{T}(\gamma)) = \frac{\partial}{\partial \mathcal{T}}J(x,u)$ is the switching time gradient (see Eq.~(\ref{eq-stgrad})). Additionally, $D\mathcal{T}(\gamma)$ is the derivative of the vector of switching times with respect to the step size $\gamma$ and is given in the following lemma, which is Lemma 5 from \cite{caldwell_murphey_cdc12}.  
\begin{lem}[$DT_i(\gamma)$]
\label{lem-DTgamma}
If $\gamma\not\in\Gamma$\textemdash i.e. $\Sigma(\gamma)$ is constant\textemdash then the $i^{\textrm{th}}$ element of the derivative of $\mathcal{T}(\gamma)$, $D\mathcal{T}(\gamma)_i = DT_i(\gamma)$, is given for the following two cases:
\begin{enumerate}
\item If $T_i(\gamma)$ is not a critical time of $\mu_{\sigma_i\sigma_{i+1}}:= u_{\sigma_i\sigma_{i+1}}-\gamma d_{\sigma_i\sigma_{i+1}}$, then
\begin{equation}
DT_i(\gamma) =  -\frac{u_{\sigma_i\sigma_{i+1}}(T_i(\gamma))}{\gamma^2\dot{d}_{\sigma_i\sigma_{i+1}}(T_i(\gamma))}.
\label{eq-Dtau_cont}
\end{equation}
\item If $T_i(\gamma)$ is a discontinuity time of $\mu_{\sigma_i\sigma_{i+1}}$ and 
\[
0\in (\mu_{\sigma_i\sigma_{i+1}}(T_i(\gamma)^-), \mu_{\sigma_i\sigma_{i+1}}(T_i(\gamma)^+)),
\]
then $DT_i(\gamma) = 0$.
\end{enumerate}
\end{lem}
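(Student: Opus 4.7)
The plan is to characterize each $T_i(\gamma)$ implicitly as a sign change of
\[
\mu_{\sigma_i\sigma_{i+1}}(t,\gamma) := u_{\sigma_i\sigma_{i+1}}(t) - \gamma\, d_{\sigma_i\sigma_{i+1}}(t),
\]
since the projection $\mathcal{Q}(u-\gamma d)$ switches between modes $\sigma_i$ and $\sigma_{i+1}$ exactly where this scalar function changes sign. Between consecutive (fixed, $\gamma$-independent) discontinuities of $u$, the term $u_{\sigma_i\sigma_{i+1}}$ is constant in $t$, so the time-regularity of $\mu_{\sigma_i\sigma_{i+1}}$ is inherited from $d_{\sigma_i\sigma_{i+1}}$, which is at least $\mathcal{C}^2$ by Lemma~\ref{lem-lipschitz}. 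Two distinct geometries for the zero can occur, giving the two cases.

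\textbf{Case 1 (smooth transversal zero).} Suppose $T_i(\gamma)$ is not a critical time of $\mu_{\sigma_i\sigma_{i+1}}$, i.e.\ $\mu_{\sigma_i\sigma_{i+1}}$ is continuously differentiable in $t$ at $T_i(\gamma)$ with nonzero time derivative. Since $u$ is locally constant there, $\partial_t\mu_{\sigma_i\sigma_{i+1}}(T_i(\gamma),\gamma) = -\gamma\,\dot d_{\sigma_i\sigma_{i+1}}(T_i(\gamma))\neq 0$, and $\partial_\gamma\mu_{\sigma_i\sigma_{i+1}}(T_i(\gamma),\gamma) = -d_{\sigma_i\sigma_{i+1}}(T_i(\gamma))$. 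Applying the implicit function theorem to $\mu_{\sigma_i\sigma_{i+1}}(T_i(\gamma),\gamma) = 0$ gives
\[
DT_i(\gamma) \;=\; -\frac{\partial_\gamma \mu_{\sigma_i\sigma_{i+1}}}{\partial_t \mu_{\sigma_i\sigma_{i+1}}}\bigg|_{(T_i(\gamma),\gamma)} \;=\; -\frac{d_{\sigma_i\sigma_{i+1}}(T_i(\gamma))}{\gamma\,\dot d_{\sigma_i\sigma_{i+1}}(T_i(\gamma))}.
\]
Then I substitute the zero condition $\gamma\, d_{\sigma_i\sigma_{i+1}}(T_i(\gamma)) = u_{\sigma_i\sigma_{i+1}}(T_i(\gamma))$ into the numerator to rewrite this in the form of Eq.~(\ref{eq-Dtau_cont}).

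\textbf{Case 2 (jump zero).} Here $T_i(\gamma)$ coincides with a discontinuity of $\mu_{\sigma_i\sigma_{i+1}}$, which—since $d_{\sigma_i\sigma_{i+1}}$ is continuous on the surrounding interval by Lemma~\ref{lem-lipschitz}—must be a jump of $u$ itself. Such jumps sit at $\gamma$-independent times. The strict straddling condition $0\in(\mu_{\sigma_i\sigma_{i+1}}(T_i(\gamma)^-),\mu_{\sigma_i\sigma_{i+1}}(T_i(\gamma)^+))$ is an open condition, and $\mu_{\sigma_i\sigma_{i+1}}(\cdot^\pm)$ depends continuously (indeed affinely) on $\gamma$ at that jump point, so the straddling persists on a neighborhood of $\gamma$. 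On that neighborhood $T_i(\gamma')$ stays pinned to the same $u$-discontinuity, giving $DT_i(\gamma) = 0$.

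The main obstacle is bookkeeping rather than analytic depth: one must confirm that the non-critical assumption in Case 1 genuinely supplies a nonzero $\partial_t\mu_{\sigma_i\sigma_{i+1}}$ (so IFT applies with the clean formula above, rather than requiring a higher-order expansion), and that in Case 2 the jump's location really is independent of $\gamma$—both of which hinge on $u$ being piecewise constant with $\gamma$-independent breakpoints and on $d$ inheriting enough smoothness from Lemma~\ref{lem-lipschitz} on the relevant subintervals. The hypothesis $\gamma\notin\Gamma$ ensures that the mode sequence does not collapse or split at $\gamma$, so the indexing $\sigma_i,\sigma_{i+1}$ is unambiguous in a neighborhood and the differentiation is well-posed.
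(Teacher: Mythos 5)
Your proof is correct, and it takes the standard route: the paper itself does not reprove this lemma (it is imported verbatim as Lemma 5 of \cite{caldwell_murphey_cdc12}), but your implicit-function-theorem argument on the zero-crossing equation $u_{\sigma_i\sigma_{i+1}}(T_i(\gamma))-\gamma d_{\sigma_i\sigma_{i+1}}(T_i(\gamma))=0$ is exactly the mechanism the paper relies on downstream (Eq.~(\ref{eq-Tigamma_form}) in Lemma~\ref{lem-d_properties} and the differentiation in Lemma~\ref{lem-STapprox}), and your handling of the two cases---transversal zero on a subinterval where $u$ is constant, versus a zero pinned to a $\gamma$-independent jump of $u$ (the only possible discontinuities, since $d_{\sigma_i\sigma_{i+1}}=\rho^T(f_{\sigma_i}-f_{\sigma_{i+1}})$ is continuous)---is sound.
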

There are times $t$ of $\mu_{\sigma_i\sigma_{i+1}}(\cdot)$ where $t$ is a critical time but not a discontinuity time---e.g. when $\dot{\mu}_{\sigma_i\sigma_{i+1}}(t) = 0$. According to Eq.~(\ref{eq-Dtau_cont}), as a switching time $T_i(\gamma)$ approaches a time $t$ where $\dot{d}_{\sigma_i\sigma_{i+1}}(t) = 0$, $DT_i(\gamma)$ goes unbounded. By Assumption~\ref{ass-v_crit} on $d$, there are only a finite number of critical times. These times are handled in the next section, specifically at step size $\gamma_0$, defined in Eq.~\ref{eq-gamma0k}. 

Eq.~(\ref{eq-DJgamma}), the derivative of the cost with respect to $\gamma$, uses the switching time gradient, Eq.~(\ref{eq-stgrad}), along with the result in Lemma~\ref{lem-DTgamma}. In the next section, we approximate the switching times' dependence on $\gamma$ near $\gamma_0$ uses Lemma~\ref{lem-DTgamma}.

\subsection{Local  Approximation of the Switching Times \label{sec-approx_ST}}
Recall Contribution A in which we wish to locally approximate $J(\mathcal{P}(u-\gamma d))$ in a neighborhood of $\gamma_0$ for $\gamma>\gamma_0$.  Since the size of $\Gamma$ in Eq.~(\ref{eq-Gamma}) is finite, there exists some $\delta\gamma>0$ for which $\Sigma(\gamma)$ is constant for $\gamma\in(\gamma_0,\gamma_0+\delta\gamma)$.  Consequently, only $\mathcal{T}(\gamma)$ varies for $\gamma\in(\gamma_0,\gamma_0+\delta\gamma)$ and the approximation of $J(\mathcal{P}(u-\gamma d))$ in the direction $d$ depends on the approximation of $\mathcal{T}(\gamma)$.

In order for the mode schedule to vary for $\gamma\in(\gamma_0,\gamma_0+\delta\gamma)$ at least one switching time of $\mathcal{T}(\gamma)$ must vary with $\gamma$.  Suppose this switching time is $T_i(\gamma)\in\mathcal{T}(\gamma)$, which separates adjacent modes $\sigma_i$, $\sigma_{i+1}\in\{1,\ldots,N\}$.  We wish to approximate $T_i(\gamma)$ near $\gamma_0$. Since the mode sequence might not be constant at $\gamma_0$---e.g. when $\gamma_0\in\Gamma$---it is possible that switching time $T_i$ exists at $\gamma_0^+$ but not at $\gamma_0^-$. Therefore, we approximate $T_i(\gamma)$ for $\gamma$ in neighborhoods immediately following $\gamma_0$.

Often, a function approximation is made from its Taylor expansion. Here, though, it is not always possible to directly expand $T_i(\gamma)$ around $\gamma_0$ since $DT_i(\gamma)$ can go unbounded when $\gamma$ approaches $\gamma_0^+$.   For example, referring to Eq.~(\ref{eq-Dtau_cont}), $DT_i(\gamma_0^{+})$ is unbounded when $\dot{d}_{\sigma_i\sigma_{i+1}}(T_i(\gamma_0^{+})) = 0$. We find though that even when $DT_i(\gamma_0^+)$ is unbounded, $T_i(\gamma)$ can still be approximated. We label the switching time $T_i(\gamma_0^+)$ with a \emph{type} in order to distinguish between when $\dot{d}_{\sigma_i\sigma_{i+1}}(T_i(\gamma_0^{+}))$ and higher order derivatives are zero or not. 
\begin{definition}\label{def-type}
Suppose $T_i(\gamma_0^+) \in \mathcal{T}(\gamma_0^+)$ is the switching time between modes $\sigma_i$ and $\sigma_{i+1}\in \Sigma(\gamma_0^+)$. The switching time $T_i(\gamma_0^+)$ is of \emph{type} $m(T_i(\gamma_0^{+}))\in\{0,1,\ldots\}$ where $m(T_i(\gamma_0))=0$ if there is $\delta\gamma>0$ such that $T_i(\gamma_0^+)$ is constant for all $\gamma\in(\gamma_0,\gamma_0 + \delta\gamma)$, else
\[
m(T_i(\gamma_0^{+})) :=\min\{m\in\setN | D^md_{\sigma_i\sigma_{i+1}}(T_i(\gamma_0^{+}))\neq 0\}
\]
assuming $d_{\sigma_i\sigma_{i+1}}(T_i(\gamma_0^{+}))$ is as differentiable as needed.
\end{definition}

Figure~\ref{fig-mk1and2} shows two example mode insertion gradients for which type-1 (pictured left) and type-2 (pictured right) switching times occur. The type-0 switching times fall under case 2 of Lemma~\ref{lem-DTgamma} where $DT_i(\gamma_0^+) = 0$.

With just Assumption~\ref{ass-C2bnd}, there is no guarantee that $D^md_{\sigma_i\sigma_{i+1}}(T_i(\gamma_0^{+}))$ exists for $m>2$.  For this reason, we regard the situation when $\dot{d}_{\sigma_i\sigma_{i+1}}(T_i(\gamma_0^{+})) = \ddot{d}_{\sigma_i\sigma_{i+1}}(T_i(\gamma_0^{+})) = 0$ as a degeneracy and we make the additional following assumption
\begin{assumption}\label{ass-type}
Each switching time $T_i(\gamma_0^+)\in\mathcal{T}(\gamma_0^+)$ is of type-0, 1, or 2.
\end{assumption}

\new{
Depending on $f_{\sigma_i}$, $f_{\sigma_{i+1}}$, and $J$, an arbitrary number of derivatives of $d_{\sigma_i\sigma_{i+1}}$ could be zero at a switching time assuming they exist, which affects one's ability to specify a sufficient descent condition and bound the number of line search steps. However, we will find in Corollary~\ref{cor-descent_direction_analytic} in Section~\ref{sec-descdir} that $d$ is a descent direction as long as $\theta<0$ and one of these derivatives is non-zero---in other words, we can still guarantee a reduction to cost is possible. Typically, $\dot{d}_{\sigma_i\sigma_{i+1}}(T_i(\gamma_0^+))\neq 0$ or $\ddot{d}_{\sigma_i\sigma_{i+1}}(T_i(\gamma_0^+))\neq 0$, and when it is we can provide a bound on the number of steps required.

Moreover, recall $d_{ab}(t) = \rho^T(t)(f_a(x(t)) - f_b(x(t)))$, $a\neq b\in\{1,\ldots,N\}$, and that $\dot{d}_{ab}(t) = \dot{\rho}^T(t)(f_a(x(t)) - f_b(x(t))) + \rho^T(t)(\frac{d}{dt}Df_a(x(t)) - \frac{d}{dt}f_b(x(t)))$. The equation for $\ddot{d}_{ab}(t)$ is in the proof of Lemma~\ref{lem-lipschitz} in Eq.~\ref{eq-dab_doubledot}. For $t$ to be a type-2 switching time, $\dot{d}_{ab}(t)$ must be zero which means that $[\rho(t),\dot{\rho}(t)]^T$ is in the null space of $[(f_a(x(t)) - f_b(x(t)))^T, (\frac{d}{dt}Df_a(x(t)) - \frac{d}{dt}f_b(x(t)))^T]$. Certainly, if $f_a(x(t)) - f_b(x(t)) = 0$ and $\frac{d}{dt}Df_a(x(t)) - \frac{d}{dt}f_b(x(t)) = 0$, then $t$ cannot be type-1. The degree of agreement between $f_a$ and $f_b$ can be tested a priori, In general though, the null space condition must be checked at runtime to see if the two vector fields are ``accidentally’’ too similar to each other at a particular state or if their difference happens to lie in the null space. A similar argument can be made for finding $t$ to not be type-2 either, in which case, both $\dot{d}_{ab}(t) = 0$ and $\ddot{d}_{ab}(t) = 0$. In practice, we have only seen type-0, 1, and 2 switching times. There may be a way to show this on full measure sets.
}

\begin{figure}
\centering
\def\svgwidth{240pt}
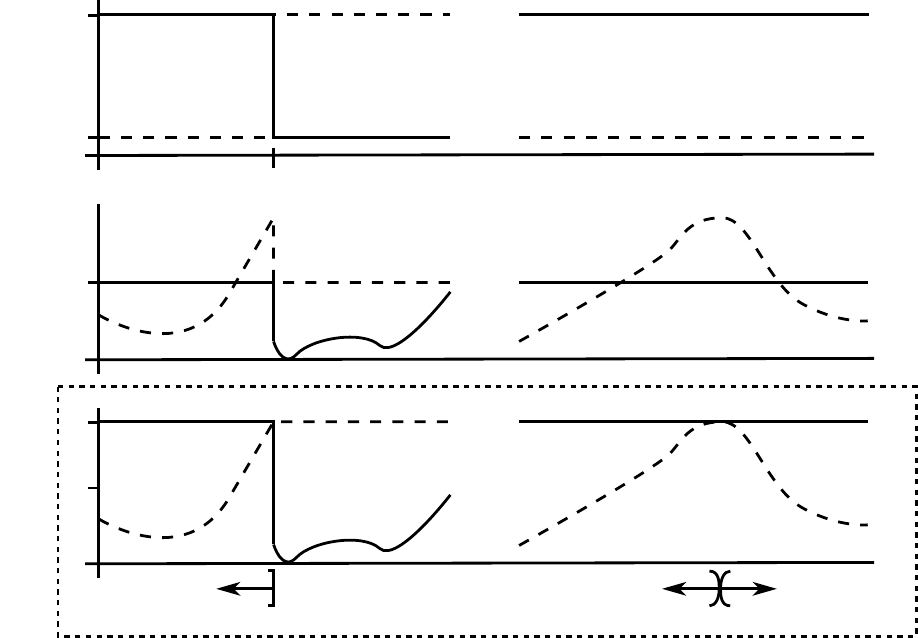
\caption{Example curves $u$, $-d$ and $\mu = u-\gamma_0 d$ showing type-1 (left) and type-2 (right) switching times.  Notice the type-1 switching time is a switching time of $u^k$ and no new insertion occurs, while type-2 switching times come in pairs to insert a mode. The directions in time the switching times vary with $\gamma>\gamma_0$ are also shown.  }
\label{fig-mk1and2}
\end{figure}

Before considering an approximation of $T_i(\gamma)$ for either type-1 or type-2 switching times, we first find that $T_i(\gamma)$ is continuous and strictly monotonic in a neighborhood after $\gamma_0$.

\begin{lem}[Continuity of switching times]
Suppose there exists $\delta\gamma>0$ such that for $\gamma\in(\gamma_0,\gamma_0+\delta\gamma)$, $T_i(\gamma)\in\mathcal{T}(\gamma)$ is the switching time between modes $\sigma_i$ and $\sigma_{i+1}\in\Sigma(\gamma)$.  If $m(T_i(\gamma_0^{+})) = 1$ or $2$, then there is $\overline{\delta\gamma}\in(0,\delta\gamma)$ such that for all $\gamma\in(\gamma_0,\gamma_0+\overline{\delta\gamma})$, $T_i(\gamma)$ is continuous and strictly monotonic.
\label{lem-continuity_of_T}
\end{lem}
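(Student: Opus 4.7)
Write $T^* := T_i(\gamma_0^+)$ and abbreviate $\alpha := \sigma_i$, $\beta := \sigma_{i+1}$. The projection $\mathcal{Q}$ switches between modes $\alpha$ and $\beta$ at the zero crossings of $\mu_{\alpha\beta}(t) = u_{\alpha\beta}(t)-\gamma d_{\alpha\beta}(t)$. On any open subinterval $I$ lying strictly between consecutive original switching times of $u$, $u_{\alpha\beta}\equiv c$ for some $c\in\{+1,-1\}$, and by Assumption~\ref{ass-C2bnd} together with Lemma~\ref{lem-lipschitz}, $d_{\alpha\beta}$ is $\mathcal{C}^2$ on $I$. On such an $I$ the switching condition reduces to the scalar level-set equation
\[
d_{\alpha\beta}(T_i(\gamma)) \;=\; c/\gamma,
\]
which at $\gamma_0$ is satisfied with $T_i(\gamma)=T^*$ by the characterization of $\gamma_0$ in Lemma~\ref{lem-gamma0}. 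The plan is to apply the implicit function theorem to $\phi(\gamma,t):=d_{\alpha\beta}(t)-c/\gamma$, but handled differently in the two type cases.

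\textbf{Case $m(T^*)=1$.} Here $\dot{d}_{\alpha\beta}(T^*)\neq 0$, and $T^*$ coincides with a switching time of $u$ (no new mode is inserted; the boundary of two existing modes shifts) so that $T_i(\gamma)$ moves into the open subinterval $I$ on the appropriate side of $T^*$. Since $\phi(\gamma_0,T^*)=0$ and $\partial_t\phi(\gamma_0,T^*)=\dot{d}_{\alpha\beta}(T^*)\neq 0$, the classical implicit function theorem produces a continuous curve $\gamma\mapsto T_i(\gamma)$ on some $(\gamma_0,\gamma_0+\overline{\delta\gamma})$. Strict monotonicity then follows directly from Lemma~\ref{lem-DTgamma}(1):
\[
DT_i(\gamma) \;=\; -\,\frac{c}{\gamma^{2}\,\dot{d}_{\alpha\beta}(T_i(\gamma))}
\]
is nonzero at $\gamma_0^+$ and, by continuity of $\dot{d}_{\alpha\beta}$, keeps constant sign on a possibly smaller neighborhood.

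\textbf{Case $m(T^*)=2$.} Here $\dot{d}_{\alpha\beta}(T^*)=0$, $\ddot{d}_{\alpha\beta}(T^*)\neq 0$, and $T^*$ lies in the interior of $I$. The classical IFT fails at $(\gamma_0,T^*)$ because $\partial_t\phi$ vanishes there, so I restrict to each side of $T^*$ separately. Lemma~\ref{lem-lipschitz} gives continuity of $\ddot{d}_{\alpha\beta}$ near $T^*$, so $\ddot{d}_{\alpha\beta}$ keeps constant sign on some $(T^*-\eta,T^*+\eta)\subset I$; hence $\dot{d}_{\alpha\beta}$ is strictly monotonic on this interval, vanishes at $T^*$, and therefore changes sign there. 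Consequently $d_{\alpha\beta}$ attains a strict local extremum of value $c/\gamma_0$ at $T^*$ and is a $\mathcal{C}^1$ strictly monotone bijection on each of $(T^*-\eta,T^*)$ and $(T^*,T^*+\eta)$, with opposite monotonicities. For $\gamma\in(\gamma_0,\gamma_0+\overline{\delta\gamma})$ the level $c/\gamma$ lies strictly inside the range of $d_{\alpha\beta}$ on each one-sided piece, so the equation $d_{\alpha\beta}(t)=c/\gamma$ has exactly one solution on each side, and applying IFT one-sidedly (where $\dot{d}_{\alpha\beta}\neq 0$) each of these solutions varies continuously and strictly monotonically in $\gamma$. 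These are precisely the pair of new switching times that emerge from $T^*$ as $\gamma$ crosses $\gamma_0$ (cf.\ Figure~\ref{fig-mk1and2}); $T_i(\gamma)$ is one of the pair and therefore inherits both properties.

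\textbf{Main obstacle.} The delicate step is Case $m=2$: the classical IFT is not applicable at the degenerate point $(\gamma_0,T^*)$. The remedy is to use $\ddot{d}_{\alpha\beta}(T^*)\neq 0$ to establish strict one-sided monotonicity of $d_{\alpha\beta}$ around the extremum, which in turn restores the nonvanishing derivative needed to apply IFT on each one-sided piece; the ``come in pairs'' structure of type-2 insertions falls out naturally from this argument.
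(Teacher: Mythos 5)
Your proof is correct in substance, but it is packaged quite differently from the paper's. The paper argues by contradiction: if no right-neighborhood of $\gamma_0$ gives continuity, then $DT_i(\gamma)$ must fail to exist for $\gamma$ arbitrarily close to $\gamma_0$, which by Lemma~\ref{lem-DTgamma} forces critical points of $\mu_{\sigma_i\sigma_{i+1}}$ to accumulate at $T_i(\gamma_0^+)$; the Lipschitz continuity of $\ddot{d}_{\sigma_i\sigma_{i+1}}$ from Lemma~\ref{lem-lipschitz} together with the type-1 or type-2 condition excludes such critical points on a one-sided interval, and the resulting constant sign of $\dot{d}_{\sigma_i\sigma_{i+1}}$ in Eq.~(\ref{eq-Dtau_cont}) gives strict monotonicity. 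You instead work directly with the level-set equation $d_{\sigma_i\sigma_{i+1}}(T_i(\gamma)) = u_{\sigma_i\sigma_{i+1}}/\gamma$ and apply the implicit function theorem, one-sidedly in the degenerate type-2 case. The core analytic content is identical in both arguments --- the type condition plus smoothness of $d_{\sigma_i\sigma_{i+1}}$ between switching times of $u$ yields a one-sided neighborhood of $T_i(\gamma_0^+)$ on which $\dot{d}_{\sigma_i\sigma_{i+1}}$ is nonvanishing with constant sign --- but your constructive route additionally explains why type-2 switching times emerge in pairs and essentially pre-derives the implicit characterization that the paper only establishes later in Lemma~\ref{lem-d_properties}. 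What it costs you is two small obligations the contradiction argument sidesteps: you must identify your IFT branch with the switching time $T_i(\gamma)$ of $\mathcal{Q}(u-\gamma d)$ hypothesized in the lemma (which requires knowing that no third mode interferes near $T_i(\gamma_0^+)$ for $\gamma$ near $\gamma_0$), and you must justify $\phi(\gamma_0,T_i(\gamma_0^+))=0$; the latter is better obtained by passing to the limit $\gamma\rightarrow\gamma_0^+$ in the level-set equation along the hypothesized branch (or, for type-2 insertions, from $\theta$ being attained at the insertion time) than by citing Lemma~\ref{lem-gamma0}. Neither point is a genuine gap.
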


The dependence of type-1 and 2 switching times on $\gamma$ have a specific form near $\gamma_0$. The following lemma uses the continuity and strict monotonicity result in Lemma~\ref{lem-continuity_of_T} to specify the dependence of the mode insertion gradient on the switching times. 
\begin{lem}[Dependence of $d(\cdot)$ on $T_i(\gamma)$]
\label{lem-d_properties}
Suppose there exists $\delta\gamma>0$ such that for $\gamma\in(\gamma_0,\gamma_0+\delta\gamma)$, $T_i(\gamma)\in\mathcal{T}(\gamma)$ is the switching time between modes $\sigma_i$ and $\sigma_{i+1}\in\Sigma(\gamma)$ and $T_i(\gamma_0^+)$ is type-1 or 2. Then, there is $\overline{\delta \gamma}\in(0,\delta\gamma]$ such that for all $\gamma\in (\gamma_0,\gamma_0+\overline{\delta \gamma})$, 
\begin{enumerate}
\item $T_i(\gamma)$ is the solution to the following implicit equation:
\begin{equation}
-1-\gamma d_{\sigma_{i+\omega}}(T_i(\gamma)) = 0,
\label{eq-Tigamma_form}
\end{equation}
\item $(-1)^\omega \dot{d}_{\sigma_{i+\omega}}(T_i(\gamma)) >0$, and
\item if $m(T_i(\gamma_0^+)) = 2$, then $\ddot{d}_{\sigma_{i+\omega}}(T_i(\gamma)) >0$,
\end{enumerate}
where $\omega = 0$ if $T_i(\gamma) > T_i(\gamma_0^{+})$ and $\omega = 1$ if $T_i(\gamma) < T_i(\gamma_0^{+})$.
\end{lem}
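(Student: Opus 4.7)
The plan is to exploit the observation that on any sufficiently small right-neighborhood $(\gamma_0,\gamma_0+\overline{\delta\gamma})$, the switching time $T_i(\gamma)$ lies inside a time interval on which the original control $u$ is constant and equal to $e_{\sigma_{i+(1-\omega)}}$. For a type-1 insertion, $t^\star:=T_i(\gamma_0^+)$ is itself a switching time of $u$, and the sign convention on $\omega$ picks out which half-neighborhood of $t^\star$ the new $T_i(\gamma)$ moves into; for a type-2 insertion, $t^\star$ lies in the interior of a maximal constant-mode interval of $u$ and both new switching times remain inside it. In either case, because $F(x(t),u(t))=f_{\sigma_{i+(1-\omega)}}(x(t))$ on this interval, the formula $d_\sigma(t)=\rho(t)^T(f_\sigma(x(t))-F(x(t),u(t)))$ gives $d_{\sigma_{i+(1-\omega)}}\equiv 0$ there, and at $T_i(\gamma)$ we have $u_{\sigma_{i+(1-\omega)}}=1$, $u_{\sigma_{i+\omega}}=0$, and $d_{\sigma_{i+(1-\omega)}}=0$.

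For claim 1, I would use the defining property that $T_i(\gamma)$ is a switching time of $\mathcal{Q}(u-\gamma d)$, which amounts to $\mu_{\sigma_i\sigma_{i+1}}(T_i(\gamma))=0$. Expanding
\[
\mu_{\sigma_i\sigma_{i+1}}=u_{\sigma_i\sigma_{i+1}}-\gamma\,d_{\sigma_i\sigma_{i+1}}
\]
and substituting the identifications above reduces this condition, up to an overall sign of $(-1)^\omega$, to $-1-\gamma\,d_{\sigma_{i+\omega}}(T_i(\gamma))=0$. This is claim 1 and it simultaneously shows $d_{\sigma_{i+\omega}}(T_i(\gamma))=-1/\gamma$ throughout the neighborhood.

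For claim 2, differentiating the implicit identity with respect to $\gamma$ and substituting $d_{\sigma_{i+\omega}}(T_i(\gamma))=-1/\gamma$ yields $\dot d_{\sigma_{i+\omega}}(T_i(\gamma))\,T_i'(\gamma)=1/\gamma^2>0$. Lemma~\ref{lem-continuity_of_T} makes $T_i(\gamma)$ continuous and strictly monotone on a right-neighborhood of $\gamma_0$; continuity together with the position of $T_i(\gamma)$ relative to $t^\star$ (strictly above if $\omega=0$, strictly below if $\omega=1$, with $T_i(\gamma)\to t^\star$ as $\gamma\downarrow\gamma_0$) forces the monotonicity direction to be $(-1)^\omega T_i'(\gamma)>0$. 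Combining with the sign relation above yields $(-1)^\omega\dot d_{\sigma_{i+\omega}}(T_i(\gamma))>0$.

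For claim 3, the type-2 hypothesis $m(t^\star)=2$ reads $\dot d_{\sigma_i\sigma_{i+1}}(t^\star)=0$ and $\ddot d_{\sigma_i\sigma_{i+1}}(t^\star)\neq 0$. Since for type-2 the identity $d_{\sigma_{i+(1-\omega)}}\equiv 0$ holds on a full two-sided neighborhood of $t^\star$, the same derivative conditions transfer to $d_{\sigma_{i+\omega}}$ by itself. Lemma~\ref{lem-gamma0} gives $d_{\sigma_{i+\omega}}(t^\star)=-1/\gamma_0=\theta$, the global infimum of the mode insertion gradient, so $t^\star$ is in particular a local minimizer of $d_{\sigma_{i+\omega}}$; the second-derivative test forces $\ddot d_{\sigma_{i+\omega}}(t^\star)\geq 0$, and combining with the nonvanishing condition gives $\ddot d_{\sigma_{i+\omega}}(t^\star)>0$. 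Continuity of $\ddot d_{\sigma_{i+\omega}}$ on the constant-mode interval (Lemma~\ref{lem-lipschitz}) then extends the strict inequality to $T_i(\gamma)$ for all sufficiently small $\gamma-\gamma_0>0$.

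The hard part will be the bookkeeping of $\omega$, namely consistently identifying $\sigma_{i+\omega}$ as the gaining mode whose insertion gradient is pinned to $-1/\gamma$ and $\sigma_{i+(1-\omega)}$ as the mode of $u$ on which $d\equiv 0$, uniformly across the type-1 and type-2 geometries. Once that identification is settled, the three claims collapse into one implicit equation, one implicit differentiation, and one application of the second-derivative test.
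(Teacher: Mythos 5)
Your proposal is correct. Claims 1 and 2 follow essentially the same route as the paper: establish that $T_i(\gamma)$ stays inside a constant-mode interval of $u$ so that $\mu_{\sigma_i\sigma_{i+1}}$ is continuous there and the sign change forces $\mu_{\sigma_i\sigma_{i+1}}(T_i(\gamma))=0$, substitute $u_{\sigma_{i+(1-\omega)}}=1$, $u_{\sigma_{i+\omega}}=0$, $d_{\sigma_{i+(1-\omega)}}=0$ to get the implicit equation, and then implicitly differentiate and combine with the strict monotonicity direction $(-1)^\omega DT_i(\gamma)>0$ from Lemma~\ref{lem-continuity_of_T}. Where you genuinely depart from the paper is claim 3. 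The paper applies the Mean Value Theorem to $\dot d_{\sigma_{i+\omega}}$ between $T_i(\gamma_0^+)$ and $T_i(\gamma_0+\overline{\delta\gamma})$, using the sign of $\dot d_{\sigma_{i+\omega}}$ established in claim 2 together with the sign of the time displacement to conclude $\ddot d_{\sigma_{i+\omega}}>0$ at some intermediate point, and then uses constancy of the sign of $\ddot d$ on the interval. You instead observe that $d_{\sigma_{i+\omega}}(T_i(\gamma_0^+))=-1/\gamma_0=\theta$ is the global lower bound of the mode insertion gradient, so $T_i(\gamma_0^+)$ is an interior minimizer of the $C^2$ function $d_{\sigma_{i+\omega}}$ on the constant-mode interval, and the second-derivative necessary condition plus the type-2 nonvanishing hypothesis give $\ddot d_{\sigma_{i+\omega}}(T_i(\gamma_0^+))>0$, extended to nearby $T_i(\gamma)$ by Lipschitz continuity. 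Both arguments are valid; yours is shorter and makes the geometric picture (type-2 insertions occur at interior minimizers of the insertion gradient) explicit, and it does not need claim 2 as an input, while the paper's MVT argument uses only one-sided, local information about $\dot d$ near $T_i(\gamma_0^+)$ and so does not lean on the global identification $d_{\sigma_{i+\omega}}(T_i(\gamma_0^+))=\theta=\min_{\sigma,t}d_\sigma(t)$ (which your argument does require, and which does hold here via claim 1 and continuity).
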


With Lemma~\ref{lem-continuity_of_T}, which shows that the type-1 and 2 switching times are continuous and strictly monotonic with respect to $\gamma$, and Lemma~\ref{lem-d_properties}, which provides properties of the mode insertion gradient's dependence on the switching times, we can now give approximations of the switching times.  The switching time approximation is used in the next section to approximate the cost function in the direction of the negative mode insertion gradient. The approximation uses the notion of little 'oh', $o(\cdot)$, which is defined as: function $g$ is $o(h)$ if for each $K>0$, there exists a $p_0>0$ such that for all $|p|<p_0$, $|g(p)|<K|h(p)|$. 

The Lemma is as follows:
\begin{lem}[Approximation of switching times]
\label{lem-STapprox}
Suppose there exists $\delta\gamma>0$ such that for $\gamma\in(\gamma_0,\gamma_0+\delta\gamma)$, $T_i(\gamma)\in\mathcal{T}(\gamma)$ is the switching time between modes $\sigma_i$ and $\sigma_{i+1}\in\Sigma(\gamma)$. Then, there is $\overline{\delta \gamma}\in(0,\delta\gamma]$ such that for all $\gamma\in (\gamma_0,\gamma_0+\overline{\delta \gamma})$,  $m(T_i(\gamma_0^{+})) = 1$ implies
\begin{equation}
T_i(\gamma) = T_i(\gamma_0^{+})  - \frac{\theta^2}{\dot{d}_{\sigma_{i+\omega}}(T_i(\gamma_0^{+}))}(\gamma-\gamma_0)+o(\gamma-\gamma_0)
\label{eq-Ti_approx_mk1}
\end{equation}
and $m(T_i(\gamma_0)) = 2$ implies
\begin{equation}
\begin{array}{l}
T_i(\gamma) = T_i(\gamma_0^{+})-\frac{(-1)^\omega \sqrt{2}\theta }{\ddot{d}_{\sigma_{i+\omega}}(T_i(\gamma_0^{+}))^{\frac{1}{2}}}(\gamma-\gamma_0)^{\frac{1}{2}}+o((\gamma-\gamma_0)^{\frac{1}{2}})
\end{array}
\label{eq-Ti_approx_mk2}
\end{equation}
where $\omega = 0$ if $T_i(\gamma) > T_i(\gamma_0^{+})$ and $\omega = 1$ if $T_i(\gamma) < T_i(\gamma_0^{+})$.
\end{lem}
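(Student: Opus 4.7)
The plan is to start from the implicit equation supplied by Lemma~\ref{lem-d_properties}, namely $-1 - \gamma d_{\sigma_{i+\omega}}(T_i(\gamma)) = 0$, and to match a Taylor expansion of the left-hand side $d_{\sigma_{i+\omega}}$ about $T_0 := T_i(\gamma_0^+)$ against the Taylor expansion of the right-hand side $-1/\gamma$ about $\gamma_0$. Evaluating the implicit equation at $\gamma_0^+$ and using $\gamma_0 = -1/\theta$ from Lemma~\ref{lem-gamma0} anchors the expansion at $d_{\sigma_{i+\omega}}(T_0) = \theta$, while expanding $-1/\gamma$ gives
\[
-\frac{1}{\gamma} = \theta + \theta^2(\gamma-\gamma_0) + o(\gamma-\gamma_0).
\]
Continuity and strict monotonicity of $T_i(\gamma)$ as $\gamma\to\gamma_0^+$ (Lemma~\ref{lem-continuity_of_T}) guarantee that $T_i(\gamma)-T_0 \to 0$ from the appropriate side, which is exactly what makes the Taylor expansion of $d_{\sigma_{i+\omega}}$ at $T_0$ applicable.

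For the type-1 case, $\dot{d}_{\sigma_{i+\omega}}(T_0) \neq 0$, so a first-order expansion suffices. Substituting
\[
d_{\sigma_{i+\omega}}(T_i(\gamma)) = \theta + \dot{d}_{\sigma_{i+\omega}}(T_0)\,(T_i(\gamma)-T_0) + o(T_i(\gamma)-T_0)
\]
into the implicit equation and cancelling $\theta$ on both sides yields
\[
\dot{d}_{\sigma_{i+\omega}}(T_0)\,(T_i(\gamma)-T_0) + o(T_i(\gamma)-T_0) = \theta^2(\gamma-\gamma_0) + o(\gamma-\gamma_0).
\]
Dividing by $\dot{d}_{\sigma_{i+\omega}}(T_0)$ then produces the claimed linear-in-$(\gamma-\gamma_0)$ expression, and a self-consistency check shows $T_i(\gamma)-T_0 = O(\gamma-\gamma_0)$, so the error $o(T_i(\gamma)-T_0)$ absorbs into $o(\gamma-\gamma_0)$.

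For the type-2 case, $\dot{d}_{\sigma_{i+\omega}}(T_0) = 0$ but $\ddot{d}_{\sigma_{i+\omega}}(T_0) > 0$ by Lemma~\ref{lem-d_properties}, so I would instead use a second-order Taylor expansion of $d_{\sigma_{i+\omega}}$. Matching against the expansion of $-1/\gamma$ gives
\[
\tfrac{1}{2}\ddot{d}_{\sigma_{i+\omega}}(T_0)\,(T_i(\gamma)-T_0)^2 + o\!\left((T_i(\gamma)-T_0)^2\right) = \theta^2(\gamma-\gamma_0) + o(\gamma-\gamma_0),
\]
so $(T_i(\gamma)-T_0)^2 = \tfrac{2\theta^2}{\ddot{d}_{\sigma_{i+\omega}}(T_0)}(\gamma-\gamma_0) + o(\gamma-\gamma_0)$. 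Taking square roots produces the $\sqrt{\gamma-\gamma_0}$ scaling, and the correct branch is pinned down by the strict-monotonicity conclusion of Lemma~\ref{lem-continuity_of_T} together with the $\omega$ convention (positive displacement when $\omega=0$, negative when $\omega=1$). Combining this with $\theta<0$, so that $|\theta| = -\theta$, recovers the factor $-(-1)^\omega\sqrt{2}\,\theta$ in the stated formula.

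The main obstacle is the self-consistent bookkeeping of the little-$o$ terms: a priori one cannot equate $o(T_i(\gamma)-T_0)$ with $o(\gamma-\gamma_0)$ (or $o((T_i(\gamma)-T_0)^2)$ with $o(\gamma-\gamma_0)$) until the dominant balance between $T_i(\gamma)-T_0$ and $\gamma-\gamma_0$ has been established. The remedy is the two-pass argument sketched above: first extract the scaling $T_i(\gamma)-T_0 \asymp (\gamma-\gamma_0)$ or $\asymp \sqrt{\gamma-\gamma_0}$ from the leading-order balance, then reinterpret the remainder terms. A secondary subtlety is the branch choice for the square root in the type-2 case, which Lemma~\ref{lem-continuity_of_T} resolves unambiguously.
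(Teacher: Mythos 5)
Your overall strategy is sound and, for the type-2 case, essentially identical to the paper's: the paper also starts from the implicit equation $-1-\gamma d_{\sigma_{i+\omega}}(T_i(\gamma))=0$ of Lemma~\ref{lem-d_properties}, Taylor expands $d_{\sigma_{i+\omega}}$ to second order about $T_i(\gamma_0^+)$ (justified by the Lipschitz property of $\ddot d$ from Lemma~\ref{lem-lipschitz} and the continuity/monotonicity of $T_i(\gamma)$ from Lemma~\ref{lem-continuity_of_T}), matches against the expansion of $-1/\gamma$ about $\gamma_0$, and then does exactly the two-pass little-$o$ bookkeeping you describe, including an explicit verification that $[c(\gamma-\gamma_0)+o(\gamma-\gamma_0)]^{1/2}=c^{1/2}(\gamma-\gamma_0)^{1/2}+o((\gamma-\gamma_0)^{1/2})$ and the branch selection via $\omega$ and point 3 of Lemma~\ref{lem-d_properties}. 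For the type-1 case the paper instead Taylor expands $T_i(\gamma)$ directly using $DT_i(\gamma_0^+)$ from Lemma~\ref{lem-DTgamma}; your uniform implicit-equation treatment is an equivalent route (it is the same implicit differentiation) and arguably cleaner.

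There is, however, one concrete problem in the type-1 case: the computation you describe does not produce the claimed expression. Matching $\theta+\dot d_{\sigma_{i+\omega}}(T_0)(T_i(\gamma)-T_0)+o(\cdot)$ against $\theta+\theta^2(\gamma-\gamma_0)+o(\cdot)$ and dividing gives
\[
T_i(\gamma)-T_i(\gamma_0^+)=+\frac{\theta^2}{\dot d_{\sigma_{i+\omega}}(T_i(\gamma_0^+))}(\gamma-\gamma_0)+o(\gamma-\gamma_0),
\]
i.e.\ the \emph{opposite} sign of the first-order coefficient in Eq.~(\ref{eq-Ti_approx_mk1}), yet you assert without comment that it ``produces the claimed expression.'' Note that the plus sign is in fact the one consistent with point 2 of Lemma~\ref{lem-d_properties} (which forces $(-1)^\omega\dot d_{\sigma_{i+\omega}}>0$) and with the sign convention on $\omega$: with the minus sign, $\omega=0$ would yield $T_i(\gamma)<T_i(\gamma_0^+)$, contradicting the definition of $\omega=0$. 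So your algebra appears to expose a sign inconsistency in the printed statement (which also propagates into Eq.~(\ref{eq-hatJk1})) rather than to contain an error itself; but a complete proof cannot simply declare agreement with Eq.~(\ref{eq-Ti_approx_mk1}) — it must either derive the stated sign or explicitly reconcile the discrepancy. As written, that step of your argument is false.
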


\new{Note that divide by zero is impossible in Eqs.~\ref{eq-Ti_approx_mk1} and \ref{eq-Ti_approx_mk2} without violating the association of the equation with its type.}

\new{
\subsection{Initial Switching Times at $\gamma_0$ \label{sec-switching_time_notation}}
We introduce some notation here in order to keep track of the type of switching times at $\gamma_0^+$. Let $\mathcal{T}(\gamma_0^+)$ be the switching times at $\gamma_0^+$. From Assumption~\ref{ass-type} the greatest type is 2. Partition $\mathcal{T}(\gamma_0^+)$ into sets of equivalent type. Define $I_1$ as the set of indexes of the type-1 switching times at $\gamma_0$ and $I_2$ as the set of indexes of type-2 switching times at $\gamma_0$.  In other words, for $j = 1,2$,
\begin{equation}
I_j = \{i\in\{1,\ldots,M-1\}|m(T_i(\gamma_0^+)) = j\}.
\label{eq-I}
\end{equation}
Further, define 
\begin{equation}
\overline{m}:=max(\{m(T_i(\gamma_0^+))\}_{i=1}^{M-1})
\label{eq-mk}
\end{equation}
to be the greatest type at $\gamma_0^+$. Lemma~\ref{lem-STapprox}  provides the approximation of the switching times for $m(T_i(\gamma_0^+)) = 1$ and $2$.  
}

\subsection{Local Approximation of the Cost \label{sec-approx_cost}}
In smooth finite dimensional optimization, an approximation of the cost in a search direction is the cost's gradient composed with that direction.  We find that the mode insertion gradient, Eq.~(\ref{eq-insertion_gradient}), has a similar role for approximating the projection-based switched system cost. 

The goal is to solve Problem~\ref{prob-unconst} through calculating an infimizing sequence where each iterative update has the form $\mathcal{P}(x,u-\gamma d)$. By approximating the cost as we do in this section, we can specify conditions for which we can guarantee that the sequence's convergence coincides with the infimal cost.  Define $J(\gamma)$ as the change in cost from a fixed $(x,u)\in\mathcal{S}$ in the direction of the negative mode insertion gradient $-d$:
\[
J(\gamma):=J(\mathcal{P}(x,u-\gamma d)).
\]
As indicated in Lemma~\ref{lem-gamma0}, the projected switching control $\mathcal{Q}(u-\gamma d)$ is constant and equal to $u$ until $\gamma>\gamma_0$. Therefore, $J(\gamma) = J(0)$ for $0<\gamma<\gamma_0$. \new{In general, it is possible that $J(\cdot)$ is discontinuous at $\gamma_0$, but when it is continuous $J(\gamma_0^+) = J(0)$. }

\new{
When the switching time of greatest type---i.e. when $\overline{m} = 1$ or $2$---we can guarantee that $J(\gamma)$ is continuous at $\gamma = \gamma_0$.
\begin{lem}
If $\overline{m} = 1$ or $2$, then $J(\gamma)$ is continuous at $\gamma_0$.
\label{lem-Jgamma_continuous}[Continuity of $J$ at $\gamma_0$]
\end{lem}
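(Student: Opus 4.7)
The plan is to show right-continuity of $J$ at $\gamma_0$; left-continuity is immediate, since Lemma~\ref{lem-gamma0} yields $\mathcal{Q}(u-\gamma d) = u$ and hence $J(\gamma) = J(0)$ for all $\gamma \in [0,\gamma_0)$. Write $u^\gamma := \mathcal{Q}(u-\gamma d)$ and let $x^\gamma$ denote the solution of Eq.~(\ref{eq-state}) driven by $u^\gamma$ with $x^\gamma(0) = x_0$, so that $J(\gamma) = \int_0^T \ell(x^\gamma(\tau))\,d\tau$. The objective is to show $J(\gamma) \to J(0)$ as $\gamma \to \gamma_0^+$.

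First I would bound the $L^1$ discrepancy between $u^\gamma$ and $u$. For $\gamma$ sufficiently close to $\gamma_0^+$, the mode sequence $\Sigma(\gamma)$ is constant (since $\Gamma$ has no accumulation points by Lemma~3 of \cite{caldwell_murphey_cdc12}), so the symmetric difference $\{t : u^\gamma(t) \neq u(t)\}$ consists only of small intervals around each switching time in $\mathcal{T}(\gamma_0^+)$. Type-0 switching times contribute nothing by Definition~\ref{def-type}. For each $i \in I_1$, Lemma~\ref{lem-STapprox} Eq.~(\ref{eq-Ti_approx_mk1}) gives $|T_i(\gamma) - T_i(\gamma_0^+)| = O(\gamma - \gamma_0)$. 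For each $i \in I_2$, Eq.~(\ref{eq-Ti_approx_mk2}) gives that the corresponding pair of switching times spreads by $O(\sqrt{\gamma - \gamma_0})$. Since $\overline{m} \le 2$ by hypothesis and $M$ is finite, summing over all switching times yields
\[
\|u^\gamma - u\|_{L^1([0,T])} \;\le\; C\sqrt{\gamma-\gamma_0} \;\longrightarrow\; 0 \quad \text{as } \gamma \to \gamma_0^+,
\]
for some constant $C$ depending on the approximation constants in Lemma~\ref{lem-STapprox}.

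Next I would transfer this $L^1$ convergence of controls into uniform convergence of states. Since $f_i$ is $\mathcal{C}^2$ with bounded second derivative (Assumption~\ref{ass-C2bnd}.1), each $f_i$ is locally Lipschitz; standard a priori bounds guarantee that $x^\gamma$ stays in a bounded set uniformly in $\gamma$ near $\gamma_0$, on which $f_i$ is globally Lipschitz with some constant $K_1$. Writing
\[
x^\gamma(t) - x(t) \;=\; \int_0^t \bigl[F(x^\gamma,u^\gamma) - F(x,u^\gamma)\bigr]\,d\tau \;+\; \int_0^t \bigl[F(x,u^\gamma) - F(x,u)\bigr]\,d\tau,
\]
the first integrand is bounded by $K_1\|x^\gamma(\tau)-x(\tau)\|$ and the second is bounded pointwise in $\tau$ by $\max_i\|f_i(x(\tau))\| \cdot \|u^\gamma(\tau) - u(\tau)\|$, so its integral over $[0,T]$ is $O(\|u^\gamma-u\|_{L^1})$. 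Applying Gronwall's inequality gives $\|x^\gamma - x\|_\infty = O(\sqrt{\gamma-\gamma_0}) \to 0$.

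Finally, continuity of $\ell$ (Assumption~\ref{ass-C2bnd}.2) combined with the uniform bound on $x^\gamma$ yields, via dominated convergence, $\int_0^T \ell(x^\gamma(\tau))\,d\tau \to \int_0^T \ell(x(\tau))\,d\tau$, i.e.\ $J(\gamma) \to J(0) = J(\gamma_0)$. The main subtlety is ensuring that the type-2 intervals really only contribute $O(\sqrt{\gamma-\gamma_0})$ measure (rather than something worse arising from accumulated switches), but this is precisely what the $\sqrt{\cdot}$ approximation in Eq.~(\ref{eq-Ti_approx_mk2}) controls, together with the finiteness of $M$ and the absence of type-$\geq 3$ switches guaranteed by the hypothesis $\overline{m} \le 2$.
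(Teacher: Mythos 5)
Your proof is correct, but it takes a genuinely different route from the paper's. The paper's argument is two sentences: for $\gamma$ just past $\gamma_0$ the mode sequence $\Sigma(\gamma)$ is frozen (since $\Gamma$ is finite), so $J(\gamma)$ depends on $\gamma$ only through the switching-time vector $\mathcal{T}(\gamma)$, and Lemma~\ref{lem-continuity_of_T} gives continuity of $\mathcal{T}(\gamma)$ on $(\gamma_0,\gamma_0+\overline{\delta\gamma})$; continuity of $J$ then follows from the switching-time-optimization fact that $J(\Sigma,\cdot)$ is continuous (indeed differentiable, Eq.~(\ref{eq-stgrad})) in $\mathcal{T}$. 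You instead work at the level of the controls: you bound $\|u^\gamma-u\|_{L^1}$ by $O(\sqrt{\gamma-\gamma_0})$ using the type-1 and type-2 switching-time expansions of Lemma~\ref{lem-STapprox}, push this through Gr\"onwall to get $\|x^\gamma-x\|_\infty\to 0$, and conclude via the Lipschitz bound on $\ell$. Your route is longer but more self-contained and quantitative: it yields an explicit H\"older-$\tfrac12$ modulus of continuity for $J$ at $\gamma_0^+$ (consistent with the leading term of $\hat J(2;\gamma)$ in Eq.~(\ref{eq-hatJk2})), and it makes explicit a step the paper leaves implicit, namely that as the inserted type-2 intervals collapse to zero width the trajectory, and hence the cost, actually returns to that of $u$, i.e.\ $J(\gamma_0^+)=J(0)$ rather than merely the existence of a one-sided limit. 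The paper's route buys brevity by reusing Lemma~\ref{lem-continuity_of_T} and the switching-time gradient machinery already established. Two minor points to tighten: the $L^1$ bound should be stated for $\gamma$ sufficiently close to $\gamma_0$ (the $o(\cdot)$ remainders in Lemma~\ref{lem-STapprox} only dominate locally), and the uniform a priori bound on $x^\gamma$ should be tied to the bounds $\|f_j(x)\|\leq K_0$, $\|Df_j(x)\|\leq K_1$ that the paper invokes in the proof of Lemma~\ref{lem-lipschitz}, since Assumption~\ref{ass-C2bnd} alone only bounds second derivatives.
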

\begin{proof}
Since the set of $\gamma\in\setR^+$ where the mode schedule of $\mathcal{P}(u-\gamma d)$, $\Gamma$ Eq.~(\ref{eq-Gamma}), varies has measure zero, there is $\delta \gamma > 0$ such that $J(\gamma)$ only depends on $\gamma$ through the switching times $\mathcal{T}(\gamma)$ for all $\gamma_0<\gamma<\gamma_0+\delta \gamma$. Due to Lemma~\ref{lem-continuity_of_T}, there is $\overline{\delta \gamma}$ such that $\mathcal{T}(\gamma)$ is continuous for all $\gamma_0<\gamma<\gamma_0+\overline{\delta \gamma}$
\end{proof}
}

We further analyze $J(\gamma)$ by locally approximating it for $\gamma>\gamma_0$ near $\gamma_0$. Let $\Sigma(\gamma) = [\sigma_1,\ldots,\sigma_M]$ and $\mathcal{T}(\gamma) = [T_1(\gamma),\ldots,T_{M-1}(\gamma)]$ be the mode schedule for $\gamma>\gamma_0$ near $\gamma_0$. The mode sequence $\Sigma(\gamma)$ is constant for some neighborhood greater than $\gamma_0$ since $\Gamma$ has measure zero. Define $\tilde{J}(\gamma)$ as the zeroth-order and first-order terms of the Taylor expansion of $J(\gamma)$, around $\mathcal{T}(\gamma_0^{+})$ so that $J(\gamma) = \tilde{J}(\gamma) + o(\mathcal{T}(\gamma) - \mathcal{T}(\gamma_0^+))$:
\[
\begin{array}{l}
\tilde{J}(\gamma):= J(\gamma_0^{+}) + D_2 J(\Sigma(\gamma_0^{+}),\mathcal{T}(\gamma_0^{+})) (\mathcal{T}(\gamma) - \mathcal{T}(\gamma_0^{+})).
\end{array}
\]
The term $D_2 J(\Sigma(\gamma_0^{+}),\mathcal{T}(\gamma_0^{+}))$ is the switching time gradient Eq.~(\ref{eq-stgrad}) and $\tilde{J}(\gamma)$ becomes
\[
\begin{array}{l}
\tilde{J}(\gamma) = J(\gamma_0^{+}) + \sum_{i = 1}^{M-1} \rho(T_i(\gamma_0^{+}))^T \\\hspace{20pt} \cdot [f_{\sigma_i}(x(T_i(\gamma_0^{+})))- f_{\sigma_{i+1}}(x(T_i(\gamma_0^{+})))](T_i(\gamma) - T_i(\gamma_0^{+})).
\end{array}
\]
There is at least one $T_i(\gamma)\in\mathcal{T}(\gamma)$ that is not constant for $\gamma>\gamma_0$ near $\gamma_0$.  If $T_i(\gamma)$ is increasing in value, the active vector field of $u$ at time $T_i(\gamma)$ is $F(x(t),u(t)) = f_{\sigma_{i+1}}(x(t))$, while, if it is decreasing in value, then $F(x(t),u(t)) = f_{\sigma_{i}}(x(t))$. Assuming $T_i(\gamma)$ increases in value with $\gamma$, the following term is simply the optimality function $\theta$, Eq.~(\ref{eq-thetacalc}):
\[
\begin{array}{l}
\rho(T_i(\gamma_0^{+}))^T[f_{\sigma_i}(x(T_i(\gamma_0^{+}))) - f_{\sigma_{i+1}}(x(T_i(\gamma_0^{+})))] \\\hspace{20pt} = \rho(T_i(\gamma_0^{+}))^T[f_{\sigma_i}(x(T_i(\gamma_0^{+}))) - F(x(T_i(\gamma_0^{+})),u(T_i(\gamma_0^{+})))] \\\hspace{20pt}= d_{\sigma_{i}}(T_i(\gamma_0^{+})) = \theta.
\end{array}
\]
Similarly, assuming $T_i(\gamma)$ decreases in value with $\gamma$, the term is instead $-\theta$:
\[
\begin{array}{l}
\rho(T_i(\gamma_0^{+}))^T[f_{\sigma_i}(x(T_i(\gamma_0^{+}))) - f_{\sigma_{i+1}}(x(T_i(\gamma_0^{+})))] \\\hspace{20pt} = \rho(T_i(\gamma_0^{+}))^T[F(x(T_i(\gamma_0^{+})),u(T_i(\gamma_0^{+}))) - f_{\sigma_{i+1}}(x(T_i(\gamma_0^{+})))] \\\hspace{20pt}= -d_{\sigma_{i+1}}(T_i(\gamma_0^{+})) = -\theta.
\end{array}
\]
Set $\omega_i = 0$ if $T_i(\gamma)$ is increasing in value with $\gamma$ and $\omega_i = 1$ if decreasing\textemdash i.e. $\omega_i = 0$ (alt. $\omega_i = 1$) implies there is $\delta\gamma>0$ such that for each $\gamma\in(\gamma_0,\gamma_0+\delta\gamma)$, $T_i(\gamma) > T_i(\gamma_0^+)$ (alt. $T_i(\gamma)<T_i(\gamma_0^+)$).  Then, $\tilde{J}(\gamma)$ is
\begin{equation}
\tilde{J}(\gamma) = J(\gamma_0^{+}) + \sum_{i = 1}^{M-1} (-1)^{\omega_i}\theta(T_i(\gamma) - T_i(\gamma_0^+)).
\label{eq-Japprox}
\end{equation}
The cost is further approximated by using the switching time approximations in Lemma~\ref{lem-STapprox}. The switching times with the greatest type dominate the approximation of the cost\textemdash e.g. type-1 switching times vary linearly with $\gamma-\gamma_0$ while type-$2$ switching times vary with $(\gamma-\gamma_0)^\frac{1}{2}$.  Label the approximation of the cost with the approximation of the switching times as $\hat{J}(\overline{m};\gamma)$ \new{(Recall the notation in Section~\ref{sec-switching_time_notation} for $I$ and $\overline{m}$)}:
\begin{equation}
\hat{J} (1;\gamma) := J(0) - \sum_{i\in I_1} (-1)^{\omega_i}\frac{(\theta)^3}{\dot{d}_{\sigma_{i+\omega_i}}(T_i(\gamma_0^+))}(\gamma-\gamma_0),
\label{eq-hatJk1}
\end{equation}
and
\begin{equation}
\hat{J} (2;\gamma) := J(0)  - \sum_{i\in I_2} \frac{\sqrt{2}(\theta)^2}{\ddot{d}_{\sigma_{i+\omega_i}}(T_i(\gamma_0^+))^{\frac{1}{2}}}(\gamma-\gamma_0)^{\frac{1}{2}},
\label{eq-hatJk2}
\end{equation}
\new{Note that $J(\gamma_0^+) = J(0)$ due to Lemma~\ref{lem-Jgamma_continuous}.} The following lemma states that $\hat{J}(\overline{m};\gamma)$ dominates the remaining terms of $J(\gamma)$ for $\gamma>\gamma_0$ near $\gamma_0$.  In other words, $\hat{J}(\overline{m};\gamma)$ is a valid local approximation of $J(\gamma)$ near $\gamma_0$.
\begin{lem}[Approximation of the Cost]
\label{lem-Japprox}
Set $J(\gamma) = \hat{J}(\overline{m};\gamma) + R(\gamma)$ where $R(\gamma)$ is the remainder.  If $\overline{m} = 1$ or $2$, then there exists $\delta\gamma>0$ such that for all  $\gamma\in(\gamma_0,\gamma_0+\delta\gamma)$, $|\hat{J}(\overline{m};\gamma)-J(0)| \geq  |R(\gamma)|$.
\end{lem}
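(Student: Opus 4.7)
The plan is to decompose the cost difference as
\[
J(\gamma) - J(0) = \bigl[\tilde{J}(\gamma) - J(\gamma_0^+)\bigr] + \bigl[J(\gamma) - \tilde{J}(\gamma)\bigr],
\]
where $J(\gamma_0^+)=J(0)$ by Lemma~\ref{lem-Jgamma_continuous} (valid since $\overline{m}\in\{1,2\}$) and the second bracket equals $o(\|\mathcal{T}(\gamma)-\mathcal{T}(\gamma_0^+)\|)$ by the construction of $\tilde{J}$. On some right-neighborhood $(\gamma_0,\gamma_0+\delta\gamma_*)$ the mode sequence $\Sigma(\gamma)$ is constant (because $\Gamma$ has zero measure), so only switching times vary. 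Type-$0$ switching times contribute nothing and may be dropped; for the remaining indices $i\in I_1\cup I_2$, substitute the approximations of $T_i(\gamma)-T_i(\gamma_0^+)$ from Lemma~\ref{lem-STapprox} into Eq.~(\ref{eq-Japprox}).

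When $\overline{m} = 1$, every varying $T_i(\gamma)-T_i(\gamma_0^+)$ is of size $O(\gamma-\gamma_0)$ with the explicit coefficient in Eq.~(\ref{eq-Ti_approx_mk1}). Substituting and collecting reproduces $\hat{J}(1;\gamma)-J(0)$ up to an $o(\gamma-\gamma_0)$ error. Since $\|\mathcal{T}(\gamma)-\mathcal{T}(\gamma_0^+)\| = O(\gamma-\gamma_0)$, the Taylor remainder also contributes $o(\gamma-\gamma_0)$, so $R(\gamma) = o(\gamma-\gamma_0)$. Meanwhile $\hat{J}(1;\gamma)-J(0)$ is strictly linear in $\gamma-\gamma_0$ with leading coefficient $-\sum_{i\in I_1}(-1)^{\omega_i}\theta^3/\dot{d}_{\sigma_{i+\omega_i}}(T_i(\gamma_0^+))$: by part~2 of Lemma~\ref{lem-d_properties} every summand carries the same sign (since $(-1)^{\omega_i}\dot{d}_{\sigma_{i+\omega_i}}>0$ and $\theta^3$ is sign-fixed), so no cancellation can occur. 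Hence $|\hat{J}(1;\gamma)-J(0)|\ge|R(\gamma)|$ holds for $\gamma-\gamma_0$ sufficiently small.

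The case $\overline{m} = 2$ follows the same template with leading order $(\gamma-\gamma_0)^{1/2}$. The type-$2$ contributions yield, via Eq.~(\ref{eq-Ti_approx_mk2}), precisely the sum defining $\hat{J}(2;\gamma)-J(0)$ modulo $o((\gamma-\gamma_0)^{1/2})$. Any type-$1$ contributions are of order $O(\gamma-\gamma_0) = o((\gamma-\gamma_0)^{1/2})$ and are absorbed into $R(\gamma)$. Because $\|\mathcal{T}(\gamma)-\mathcal{T}(\gamma_0^+)\|=O((\gamma-\gamma_0)^{1/2})$ in this case, the Taylor remainder is $o((\gamma-\gamma_0)^{1/2})$ as well, so $R(\gamma)=o((\gamma-\gamma_0)^{1/2})$. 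The leading coefficient of $\hat{J}(2;\gamma)-J(0)$ is $-\sum_{i\in I_2}\sqrt{2}\theta^2/\ddot{d}_{\sigma_{i+\omega_i}}(T_i(\gamma_0^+))^{1/2}$, strictly nonzero because part~3 of Lemma~\ref{lem-d_properties} guarantees $\ddot{d}_{\sigma_{i+\omega_i}}(T_i(\gamma_0^+))>0$ and thus every summand has the same sign. Comparing orders gives $|\hat{J}(2;\gamma)-J(0)|\ge|R(\gamma)|$ on a small right-neighborhood.

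The main obstacle is the careful accounting of three separate $o(\cdot)$ contributions---the first-order Taylor remainder of $J$ in the switching times, the remainder from the switching-time approximation in Lemma~\ref{lem-STapprox}, and (in the $\overline{m} = 2$ case) the absorbed type-$1$ linear terms---together with the verification that the leading coefficient of $\hat{J}(\overline{m};\gamma)-J(0)$ is strictly nonzero so that the dominant term genuinely dominates. Both the sign and the non-vanishing of the leading coefficient are delivered cleanly by Lemma~\ref{lem-d_properties}, which pins the sign of each summand and rules out cancellation across the sum.
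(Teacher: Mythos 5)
Your proof is correct and follows essentially the same route as the paper: expand $J$ to first order in the switching times, substitute the switching-time approximations of Lemma~\ref{lem-STapprox}, and compare the orders of the dominant term against the collected $o(\cdot)$ remainders. Your explicit verification that the leading coefficient of $\hat{J}(\overline{m};\gamma)-J(0)$ is nonzero (no cancellation across the sum, via Lemma~\ref{lem-d_properties}) is a detail the paper leaves implicit, but the argument is otherwise the same.
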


Lemma~\ref{lem-Japprox} shows that the approximation of the cost in the direction of the negative mode insertion gradient $\hat{J}(\overline{m};\gamma)$ (Contribution A of the paper) dominates the remaining terms of $J(\gamma)$ in a neighborhood of $\gamma_0$. 
\new{
\subsection{Descent Direction}
\label{sec-descdir}
In order to show sufficient descent (Contribution C) and for backtracking to be applicable (Contribution D), $-d$ must be a descent direction (Contribution B).  In this section we prove $-d$ is a descent direction directly from the approximation of $J(\gamma):=J(\mathcal{P}(u-\gamma d))$ given in Eq.~(\ref{eq-Japprox}).  The search direction $-d$ is defined as a \emph{descent direction} if there is a $\delta\gamma>0$ such that for each $\gamma\in(\gamma_0,\gamma_0+\delta\gamma)$, $J(\gamma)<J(0)$. 

\begin{lem}[Guaranteed descent when $J$ continuous at $\gamma_0$]
If $\theta<0$ and $J(\gamma)$ is continuous at $\gamma_0$, then $-d$ is a descent direction.
\label{lem-descent_direction_J_continuous}
\end{lem}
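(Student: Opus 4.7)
The plan is to apply the local approximation of $J$ in the direction $-d$ and use $\theta<0$ to deduce a strict decrease in cost for $\gamma>\gamma_0$ close to $\gamma_0$. Continuity of $J$ at $\gamma_0$ gives $J(\gamma_0^+)=J(0)$, so it suffices to prove $J(\gamma)<J(\gamma_0^+)$ on a right neighborhood of $\gamma_0$.

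The main computation rests on Eq.~(\ref{eq-Japprox}),
\[
\tilde J(\gamma)-J(\gamma_0^+) \;=\; \theta \sum_{i=1}^{M-1} (-1)^{\omega_i}\bigl(T_i(\gamma)-T_i(\gamma_0^+)\bigr),
\]
combined with a sign observation pulled straight from the definition of $\omega_i$: the product $(-1)^{\omega_i}\bigl(T_i(\gamma)-T_i(\gamma_0^+)\bigr)$ is non-negative for every $i$, since $\omega_i=0$ means the switching time moves right (positive times positive) and $\omega_i=1$ means it moves left (negative times negative). By Lemma~\ref{lem-gamma0}, $\mathcal{Q}(u-\gamma_0^+ d)\neq u$, so the projected mode schedule actually changes past $\gamma_0$ and at least one $T_i$ has a strictly nonzero displacement; hence the sum is strictly positive on a right neighborhood of $\gamma_0$. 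Multiplying by $\theta<0$ gives $\tilde J(\gamma)<J(\gamma_0^+)$.

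To upgrade this leading-order inequality to the full cost $J$, I would invoke Lemma~\ref{lem-Japprox}. Continuity of $J$ at $\gamma_0$ forbids a newly-introduced switching time from appearing with nonzero initial extent (which would produce a jump in $J$), so every newly appearing switching time in $\mathcal{T}(\gamma_0^+)$ actually moves with $\gamma$ and is therefore of type 1 or 2 under Assumption~\ref{ass-type}; hence $\overline{m}\in\{1,2\}$ and Lemma~\ref{lem-Japprox} applies, giving $J(\gamma)=\hat J(\overline m;\gamma)+R(\gamma)$ with $|\hat J(\overline m;\gamma)-J(0)|\geq|R(\gamma)|$. Since $\hat J(\overline m;\gamma)-J(0)$ inherits the negative sign of the leading-order term analyzed above, this yields $J(\gamma)<J(0)$ on a right neighborhood of $\gamma_0$.

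The main obstacle is cleanly using the continuity hypothesis to rule out type-0 new insertions and conclude $\overline m\in\{1,2\}$, which is what allows Lemma~\ref{lem-Japprox} to be invoked. Once that reduction is in place, the descent claim reduces to the transparent sign calculation above: the identity in Eq.~(\ref{eq-Japprox}) has the form $\theta\cdot(\text{non-negative and not identically zero})$, which is strictly negative whenever $\theta<0$.
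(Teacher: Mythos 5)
Your sign computation on $\tilde J(\gamma)$ is exactly the paper's: each term $(-1)^{\omega_i}(T_i(\gamma)-T_i(\gamma_0^+))$ is non-negative, at least one is strictly positive once the projection changes past $\gamma_0$, and multiplying by $\theta<0$ gives $\tilde J(\gamma)<J(\gamma_0^+)$; your explicit appeal to Lemma~\ref{lem-gamma0} to justify that some switching time actually moves is a good touch. The gap is in how you control the remainder. You route the argument through Lemma~\ref{lem-Japprox}, which is only stated for $\overline{m}=1$ or $2$, and you try to derive $\overline{m}\in\{1,2\}$ from the continuity hypothesis. That derivation does not hold: continuity of $J$ at $\gamma_0$ does not rule out switching times of type $3$ or higher (e.g.\ $\dot d_{\sigma_i\sigma_{i+1}}=\ddot d_{\sigma_i\sigma_{i+1}}=0$ but a nonzero third derivative at the insertion point). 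Worse, the paper explicitly uses this lemma to prove Corollary~\ref{cor-descent_direction_analytic}, whose entire purpose is to cover the analytic case \emph{without} Assumption~\ref{ass-type}, i.e.\ precisely when $\overline m$ may exceed $2$ yet $J(\gamma)$ is still (absolutely) continuous at $\gamma_0$. If the lemma's proof required $\overline m\in\{1,2\}$, that corollary would be unsupported. Also, your identification of ``new insertion with nonzero initial extent'' with ``type-0'' conflates two different things: type-0 switching times are simply those that do not move with $\gamma$ (and contribute nothing to the sum); they are harmless and do not produce a jump in $J$.

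The fix is to avoid Lemma~\ref{lem-Japprox} entirely, as the paper does. Write $J(\gamma)=\tilde J(\gamma)+o(\mathcal{T}(\gamma)-\mathcal{T}(\gamma_0^+))$, which is just the first-order Taylor expansion of $J$ in the switching times and needs no type classification. Because every term in the sum $\tilde J(\gamma)-J(\gamma_0^+)=\theta\sum_i(-1)^{\omega_i}(T_i(\gamma)-T_i(\gamma_0^+))$ carries the same sign, its magnitude is $|\theta|$ times the $\ell_1$-norm of $\mathcal{T}(\gamma)-\mathcal{T}(\gamma_0^+)$, which dominates the $o(\mathcal{T}(\gamma)-\mathcal{T}(\gamma_0^+))$ remainder for $\gamma$ in a right neighborhood of $\gamma_0$. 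Hence $J(\gamma)<J(\gamma_0^+)=J(0)$ there, and no knowledge of the rate at which $\mathcal{T}(\gamma)$ varies with $\gamma$ (which is what the type classification in Lemma~\ref{lem-Japprox} encodes) is required.
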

\begin{proof}
Recall $\tilde{J}(\gamma)$, Eq.~(\ref{eq-Japprox}), is the first and zeroth-order terms of the Taylor Expansion of $J(\gamma)$ around $\mathcal{T}(\gamma_0^+)$. Therefore, 
\[
J(\gamma) = \tilde{J}(\gamma) + o(\mathcal{T}(\gamma) - \mathcal{T}(\gamma_0^+)).
\]
Recalling $\theta<0$ and considering Eq.~\ref{eq-Japprox}, if $T_i(\gamma)$ is increasing with $\gamma$, then $\omega_i = 0$ and $T_i(\gamma) - T_i(\gamma_0^+)>0$, and therefore $\tilde{J}(\gamma)<J(\gamma_0^+)$. Likewise, when $T_i(\gamma)$ is decreasing, we can conclude $\tilde{J}(\gamma)<J(\gamma_0^+)$. Thus, $\tilde{J}(\gamma)<J(\gamma_0^+)$. Since $\tilde{J}(\gamma)$ dominates $o(\mathcal{T}(\gamma) - \mathcal{T}(\gamma_0^+)$ near $\mathcal{T}(\gamma_0^+)$, there is a $\delta \gamma>0$ such that $J(\gamma) < J(\gamma_0^+)$ when $\gamma\in(\gamma_0 , \gamma_0 + \delta \gamma)$. Finally, since $J(\gamma)$ is continuous at $\gamma = \gamma_0$, $J(\gamma_0^+) = J(0)$. 
\end{proof}

With Lemma~\ref{lem-descent_direction_J_continuous}, it directly follows that $-d$ is guaranteed to be a descent direction if $\overline{m} = 1$ or $2$, or if $f_\sigma$, $\sigma\in\{1,\ldots,N\}$, and $\ell$ are analytic in $x$. The second follows from the absolute continuity of $J(\gamma)$ result in \cite{caldwell_murphey_cdc12} and allows for higher type switching times than 0, 1, or 2.
\begin{cor}[Descent Direction]
If $\theta<0$, and $\overline{m} = 1$ or $2$, then $-d$ is a descent direction.
\label{cor-descent_direction}
\end{cor}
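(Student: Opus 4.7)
The plan is to combine the two immediately preceding lemmas, since between them they supply exactly the two hypotheses needed to invoke descent. Concretely, Lemma~\ref{lem-descent_direction_J_continuous} tells us that whenever $\theta<0$ and $J(\gamma)$ is continuous at $\gamma_0$, the negative mode insertion gradient is a descent direction; and Lemma~\ref{lem-Jgamma_continuous} tells us that continuity of $J$ at $\gamma_0$ is automatic whenever $\overline{m}\in\{1,2\}$. So the corollary is essentially a one-line composition.

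First I would note that the hypothesis $\overline{m}=1$ or $2$ triggers Lemma~\ref{lem-Jgamma_continuous}, yielding continuity of $J(\gamma)$ at $\gamma=\gamma_0$ — this is exactly the statement that $J(\gamma_0^+)=J(0)$, which removes the one remaining gap between the first-order Taylor approximation $\tilde{J}(\gamma)$ (expanded around $\mathcal{T}(\gamma_0^+)$) and the initial cost $J(0)$. Then, with $\theta<0$ in hand, I would apply Lemma~\ref{lem-descent_direction_J_continuous} directly to conclude the existence of $\delta\gamma>0$ such that $J(\gamma)<J(0)$ for all $\gamma\in(\gamma_0,\gamma_0+\delta\gamma)$, which is the definition of a descent direction.

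There is no real obstacle here because the heavy lifting has been done in Lemma~\ref{lem-descent_direction_J_continuous} (where the sign analysis of each summand $(-1)^{\omega_i}\theta(T_i(\gamma)-T_i(\gamma_0^+))$ in \eqref{eq-Japprox} is carried out, yielding $\tilde{J}(\gamma)<J(\gamma_0^+)$ from $\theta<0$ irrespective of whether each switching time drifts left or right) and in Lemma~\ref{lem-Jgamma_continuous} (where continuity of $\mathcal{T}(\gamma)$ at $\gamma_0^+$, guaranteed by Lemma~\ref{lem-continuity_of_T} under $\overline{m}\in\{1,2\}$, combined with the zero-measure property of $\Gamma$, delivers continuity of $J$). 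The only subtlety worth flagging in the write-up is that the hypothesis $\overline{m}\in\{1,2\}$ is precisely what rules out the degenerate type-$0$ situation in which $J(\gamma)$ could jump at $\gamma_0$; once that is excluded, the approximation $\hat{J}(\overline{m};\gamma)$ from \eqref{eq-hatJk1}--\eqref{eq-hatJk2} genuinely dominates the remainder by Lemma~\ref{lem-Japprox}, and the descent conclusion is immediate. The proof is therefore a two-sentence corollary.
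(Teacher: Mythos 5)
Your proof is correct and is essentially identical to the paper's own two-line argument: invoke Lemma~\ref{lem-Jgamma_continuous} to obtain continuity of $J(\gamma)$ at $\gamma_0$ from $\overline{m}=1$ or $2$, then apply Lemma~\ref{lem-descent_direction_J_continuous} with $\theta<0$ to conclude descent. No gaps.
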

\begin{proof}
As indicated in Lemma~\ref{lem-Jgamma_continuous}, $J(\gamma)$ is continuous at $\gamma = \gamma_0$ when $\overline{m} = 1$ or $2$. Descent is thus guaranteed through Lemma~\ref{lem-descent_direction_J_continuous}.
\end{proof}

\begin{cor}[Descent Direction]
If $\theta<0$, and $f_\sigma$, $\sigma\in\{1,\ldots,N\}$ and $\ell$ are analytic in $x$, then $-d$ is a descent direction.
\label{cor-descent_direction_analytic}
\end{cor}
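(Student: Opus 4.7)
The plan is to reduce the statement to Lemma~\ref{lem-descent_direction_J_continuous}, which already supplies the descent conclusion from the two ingredients $\theta<0$ and continuity of $J(\gamma):=J(\mathcal{P}(u-\gamma d))$ at $\gamma=\gamma_0$. The first ingredient is part of the hypothesis, so the real task is to establish continuity of $J(\gamma)$ at $\gamma_0$ using only analyticity of the vector fields $f_\sigma$ and the running cost $\ell$ in the state. Note that Corollary~\ref{cor-descent_direction} took the easier route of assuming Assumption~\ref{ass-type} (type at most 2); here analyticity has to play the role that the explicit approximation in Lemma~\ref{lem-STapprox} played before.

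My first step would use analyticity to control the type of every switching time at $\gamma_0^+$. Since $u\in\Omega$ is piecewise constant and each $f_\sigma$ is analytic in $x$, the state $x(t)$ is piecewise analytic in $t$, i.e.\ analytic on each open interval between switching times of $u$. By the same argument applied to the adjoint equation Eq.~(\ref{eq-rhodot}), $\rho(t)$ is piecewise analytic as well. Hence each difference $d_{ab}(t)=\rho(t)^{T}(f_{a}(x(t))-f_{b}(x(t)))$ is piecewise analytic, and Assumption~\ref{ass-v_crit} rules out the degenerate case in which $d_{\sigma_{i}\sigma_{i+1}}$ is constant on a positive-length interval. Therefore, for every switching time $T_{i}(\gamma_{0}^{+})$ there is a finite smallest integer $m=m(T_{i}(\gamma_{0}^{+}))$ with $D^{m}d_{\sigma_{i}\sigma_{i+1}}(T_{i}(\gamma_{0}^{+}))\neq 0$; the conclusion of Assumption~\ref{ass-type} holds automatically, but without an a priori bound on $m$.

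The second step generalises Lemmas~\ref{lem-continuity_of_T} and \ref{lem-d_properties} to arbitrary finite $m$. Applying the analytic implicit function theorem in its Puiseux form to the defining relation $-1-\gamma d_{\sigma_{i+\omega}}(T_{i}(\gamma))=0$, I would obtain a convergent expansion $T_{i}(\gamma)-T_{i}(\gamma_{0}^{+})=O((\gamma-\gamma_{0})^{1/m})$ with lead sign selected by the monotonicity index $\omega$. This shows that every $T_{i}(\gamma)$ is continuous at $\gamma_{0}^{+}$. Combining this continuity of the switching times with the absolute-continuity result for $\gamma\mapsto J(\mathcal{P}(u-\gamma d))$ established in \cite{caldwell_murphey_cdc12} (absolute continuity on a bounded interval implies pointwise continuity there) yields $\lim_{\gamma\to\gamma_{0}^{+}}J(\gamma)=J(\gamma_{0}^{+})=J(0)$, which is exactly the hypothesis required by Lemma~\ref{lem-descent_direction_J_continuous}.

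With continuity of $J$ at $\gamma_{0}$ secured, Lemma~\ref{lem-descent_direction_J_continuous} delivers that $-d$ is a descent direction. The main obstacle I expect is the Puiseux step: one must check not only that $T_{i}(\gamma)$ admits a fractional-power expansion in $\gamma-\gamma_{0}$, but also that the lead coefficient has the sign dictated by the monotonicity index $\omega\in\{0,1\}$ inherited from Lemma~\ref{lem-d_properties}, so that the projected switching control really does insert the correct mode on the correct side of $T_{i}(\gamma_{0}^{+})$. Real-analytic reducibility of the implicit equation makes this routine but notationally heavy, and it is the only place where the analyticity hypothesis is actually needed beyond piecewise $\mathcal{C}^{2}$.
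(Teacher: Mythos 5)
Your proposal is correct and ultimately takes the same route as the paper: both arguments reduce to Lemma~\ref{lem-descent_direction_J_continuous} and obtain continuity of $J(\gamma)$ at $\gamma_0$ by citing the absolute continuity of $\gamma\mapsto J(\mathcal{P}(u-\gamma d))$ under analyticity from Theorem 5.2 of \cite{caldwell_murphey_cdc12}. The intermediate Puiseux/finite-type analysis of the switching times is not needed once that absolute continuity is invoked\textemdash the paper's proof is exactly your final step with the scaffolding removed.
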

\begin{proof}
As indicated in Theorem 5.2 in \cite{caldwell_murphey_cdc12}, $J(\gamma)$ is absolutely continuous (and therefore continuous) at $\gamma = \gamma_0$ assuming each mode $f_i$ and $\ell$ are analytic.  Descent is thus guaranteed through Lemma~\ref{lem-descent_direction_J_continuous}.
\end{proof}
}

Notice that Corollary~\ref{cor-descent_direction} does not need to strengthen Assumption~\ref{ass-C2bnd} to include analyticity requirements on the objective and dynamics, while Corollary~\ref{cor-descent_direction_analytic} does not need the type-0, 1, and 2 switching time assumption, Assumption~\ref{ass-type}.

\section{Sufficient Descent}
\label{sec-suffdesc}
Consider the following iterative algorithm:
\begin{framed}
\begin{alg}\label{alg-iter_update}
With $(x^0,u^0)\in\mathcal{S}$, execute
\[
(x^{k+1},u^{k+1}) = \mathcal{P}(x^k,u^k-\gamma^kd^k)
\]
where for each $k = 0,1,2,\ldots$, $\gamma^k>\gamma_0^k$.
\end{alg}
\end{framed}
For the remainder of the paper, the superscript $k$ signifies that the corresponding variable or mapping depends on $(x^k,u^k)$. For instance, $d^k:=d(x^k,u^k)$, $\theta^k:=\theta(x^k,u^k)$, and $J^k(\gamma)$ is $J(\mathcal{P}(x^k,u^k-\gamma d^k))$.

Algorithm~\ref{alg-iter_update} corresponds to repeatedly stepping in the direction given by the negative mode insertion gradient and projecting to a feasible switched system trajectory. The algorithm's desired result is to generate a sequence that converges to a local infimal cost in order to solve Problem~\ref{prob-unconst}. Through the descent direction result in Corollary~\ref{cor-descent_direction}, there always exists a $\gamma^k$ such that $J(\mathcal{P}(x^k,u^k-\gamma^kd^k))<J(x^k,u^k)$ as long as $\theta^k<0$ and through Corollary~\ref{cor-opt_cond}, if $\theta^k = 0$, then $(x^k,u^k)$ is optimal. By choosing a $\gamma^k$ that reduces the cost at each iteration of Algorithm~\ref{alg-iter_update}, the resulting sequence $\{x^k,u^k\}$ is such that $\{J(x^k,u^k)\}$ is strictly monotonically decreasing. As such, assuming $J(\cdot)$ is bounded below by $\underline{J}\in\setR$, then the sequence $\{J(x^k,u^k)\}$ is guaranteed to converge. However, there is as of yet no guarantee that $\{J(x^k,u^k)\}$ converges to an infimum. This section provides a means to calculate $\gamma_k$ to guarantee Algorithm~\ref{alg-iter_update} converges to a local infimal cost.  The convergence is proven by showing that the sequence of optimality functions goes to zero at the limit---i.e. $\theta^k\rightarrow 0$.

In this section, we give the sufficient descent condition (Contribution C), show that a step size $\gamma^k$ that satisfies the sufficient descent condition can be calculated in a finite number of backtracking iterations (Contribution D) and finally that executing Algorithm~\ref{alg-iter_update} for such a $\gamma^k$ results in $\lim_{k\rightarrow \infty}\theta^k = 0$.  Each of these contributions follow from the approximation of the cost (Contribution A).  

\subsection{Type-2 Sufficient Descent Condition \label{sec-type2_suffdesc}}
The sufficient descent condition (Contribution C) follows directly from the approximation of the cost $\hat{J}^k(\overline{m}^k;\gamma)$, Eqs.~(\ref{eq-hatJk1}) and (\ref{eq-hatJk2}) (Contribution A).  Set $\alpha \in (0,1)$.  The type-$m^{k}$ sufficient descent condition is 
\[
J^k(\gamma) - J^k(0) < \alpha (\hat{J}^k(\overline{m}^k;\gamma) - J^k(0)).
\]
The condition is an upper bound on the reduction of cost between successive iterations as a function of $\gamma-\gamma_0^k$.  Since this bound is the scaled approximation of the cost, there are $\gamma$ near $\gamma_0^k$ that will satisfy the inequality.  Lemma~\ref{lem-suff_dec}, presented shortly, provides an interval of such $\gamma$. In Section~\ref{sec-min_seq} we show that a sequence generated by Algorithm~\ref{alg-iter_update} converges with a properly chosen $\gamma^k$ by showing that $\theta^k$ goes to zero.

\new{
The type-1 and 2 sufficient descent conditions are:
\begin{definition}\label{def-suffdesc_type1}
Set 
\begin{equation}
s^k_1 =  \sum_{i\in I^k_1} (-1)^{\omega_i}\frac{(\theta^k)^3}{\dot{d^k}_{\sigma_{i+\omega_i}}(T_i(\gamma^{k^+}_0))}.
\label{eq-sk1}
\end{equation}
The \emph{type-1 sufficient descent} condition is 
\begin{equation}
\begin{array}{l}
J^k(\gamma) - J^k(0) < \alpha s^k_1 (\gamma - \gamma_0^k).
\end{array}
\label{eq-t1suff_dec}
\end{equation}
\end{definition}

\begin{definition}\label{def-suffdesc}
Set 
\begin{equation}
s^k_2 =  - \sum_{i\in I^k_2} \frac{\sqrt{2}(\theta^k)^2}{\ddot{d}^{k}_{\sigma_{i+\omega_i}}(T_i(\gamma_0^{k^+}))^{\frac{1}{2}}}.
\label{eq-sk2}
\end{equation}
The \emph{type-2 sufficient descent} condition is 
\begin{equation}
\begin{array}{l}
J^k(\gamma) - J^k(0) < \alpha s^k_2 (\gamma - \gamma_0^k)^\frac{1}{2}.
\end{array}
\label{eq-t2suff_dec}
\end{equation}
\end{definition}
}

We study the type-2 sufficient descent condition---i.e. when $\overline{m}^k = 2$.  For $\overline{m}^k = 1$, type-1 switching times occur at switching times of $u^k$ or at the boundary times.  Since the type-1 switching time approximation is linear in $(\gamma - \gamma_0^k)$, sufficient descent and backtracking directly correspond to switching time optimization\textemdash see \cite{caldwell_murphey_journal,egerstedt_wardi_axelsson,flasskamp_murphey_oberblobaum,xu_antsaklis_2} for switching time optimization.  

The following Lemma shows that there exists a $\hat{\gamma}>\gamma_0^k$ for which each $\gamma\in(\gamma_0^k,\hat{\gamma}]$ satisfies the type-2 sufficient descent condition.  As given in the lemma, the step size $\hat{\gamma}$ is the minimum of $\gamma_1^k$, $\gamma_2^k$ and $\gamma_3^k$.  The first step size $\gamma_1^k$ is the largest $\gamma_1^k>\gamma_0^k$ such that for each $\gamma\in(\gamma_0^k,\gamma_1^k)$, $J^k(\gamma)$ is differentiable.  The second, $\gamma_2^k$, depends on the greatest Lipschitz constant $L$ that satisfies the Lipschitz condition on the second time derivative of $d^k$ in Lemma~\ref{lem-lipschitz}.  The third, $\gamma_3^k$, is a constant scaling from $\gamma_0^k$\textemdash i.e. $\gamma_3^k = \gamma_0^k\kappa$ where depending on $\alpha\in(0,1)$, $\kappa$ is between $2-(\sqrt[3]{\frac{3\sqrt{2}}{2}})/3\approx 1.5717$ and $2$.  In the following Lemma, set $\nu := \min_{i\in I^k_2} \ddot{d}^k_{\sigma_{i+\omega_i}}(T_i(\gamma_0^{k^+}))$, where $I^k_2$ is the set of type-2 switching time indices and is defined in Eq.~(\ref{eq-I}).

\begin{lem}[Sufficient descent]
\label{lem-suff_dec}
Suppose $\overline{m}^k = 2$, $\theta^k<0$, and $\gamma_1^k>\gamma_0^k$ is such that for each $\gamma\in(\gamma_0^k,\gamma_1^k)$, $DJ(\gamma)$ exists.  Set
\[
\begin{array}{ccc}
\gamma^k_2 := \gamma_0^k\left(1 - \frac{\nu^3}{\theta^k16L^2}\right) & \textrm{and} &\gamma_3^k :=  \gamma_0^k\left(2-\frac{\sqrt[3]{\alpha \frac{3\sqrt{2}}{2}}}{3}\right).
\end{array}
\]
Defining $\hat{\gamma}^k := \min\{\gamma_1^k,\gamma_2^k,\gamma_3^k\}$, if $\gamma\in(\gamma_0^k,\hat{\gamma}^k]$, then 
\[
J^k(\gamma) - J^k(0) < \alpha s^k_2 (\gamma - \gamma_0^k)^\frac{1}{2}.
\]
.
\end{lem}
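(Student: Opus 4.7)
The plan is to reduce the sufficient descent inequality to a controlled comparison between the leading-order approximation $\hat{J}^k(2;\gamma)$ from Eq.~(\ref{eq-hatJk2}) and its remainder, then use the Lipschitz constant from Lemma~\ref{lem-lipschitz} to get an explicit rather than asymptotic bound on that remainder. Writing $R(\gamma) := J^k(\gamma) - \hat{J}^k(2;\gamma)$, so that $J^k(\gamma) - J^k(0) = s^k_2(\gamma-\gamma_0^k)^{1/2} + R(\gamma)$, the desired inequality becomes
\[
(1-\alpha)\, s^k_2 \,(\gamma-\gamma_0^k)^{1/2} + R(\gamma) < 0.
\]
By Lemma~\ref{lem-d_properties}, $\ddot{d}^k_{\sigma_{i+\omega_i}}(T_i(\gamma_0^{k^+}))>0$ and $\theta^k<0$, so $s^k_2<0$ and the first term is negative. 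The task reduces to showing $|R(\gamma)| < (1-\alpha)\,|s^k_2|\,(\gamma-\gamma_0^k)^{1/2}$ on the advertised interval.

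To bound $|R(\gamma)|$ quantitatively I would start from the identity $J^k(\gamma)-J^k(\gamma_0^{k^+}) = \int_{\gamma_0^k}^{\gamma} DJ^k(\tau)\,d\tau$ on $(\gamma_0^k,\gamma_1^k)$ where $DJ^k$ exists, use Eq.~(\ref{eq-DJgamma}) to write $DJ^k$ in terms of the switching time derivatives from Lemma~\ref{lem-DTgamma}, and substitute the type-$2$ switching time expansion Eq.~(\ref{eq-Ti_approx_mk2}) for $T_i(\tau)$. Because type-$2$ means $\dot{d}^k_{\sigma_i\sigma_{i+1}}(T_i(\gamma_0^{k^+}))=0$, the first non-trivial term in $\dot{d}^k_{\sigma_i\sigma_{i+1}}(T_i(\tau))$ is $\ddot{d}^k_{\sigma_i\sigma_{i+1}}(T_i(\gamma_0^{k^+}))\,(T_i(\tau)-T_i(\gamma_0^{k^+}))$, and the Lipschitz bound of Lemma~\ref{lem-lipschitz} lets me control the remainder of that expansion by $\tfrac{1}{2}L\,(T_i(\tau)-T_i(\gamma_0^{k^+}))^2$. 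Combining with $|T_i(\tau)-T_i(\gamma_0^{k^+})| = O((\tau-\gamma_0^k)^{1/2})$ and integrating over $[\gamma_0^k,\gamma]$, the subleading piece contributes at most a constant multiple of $(\gamma-\gamma_0^k)$; isolating this quantity across $i\in I^k_2$ yields an explicit estimate $|R(\gamma)| \le C(L,\nu,\theta^k)\,(\gamma-\gamma_0^k)$, with the constant shaped by $\nu = \min_{i\in I^k_2}\ddot{d}^k_{\sigma_{i+\omega_i}}(T_i(\gamma_0^{k^+}))$ appearing in the denominator (since it controls $|\dot{d}^k|$ away from $T_i(\gamma_0^{k^+})$).

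With such a remainder bound, the sufficient descent condition becomes $C(L,\nu,\theta^k)\,(\gamma-\gamma_0^k) < (1-\alpha)\,|s^k_2|\,(\gamma-\gamma_0^k)^{1/2}$, which is an inequality of the form $(\gamma-\gamma_0^k)^{1/2} < (1-\alpha)\,|s^k_2|/C$. Substituting the formula for $|s^k_2|$ in terms of $\theta^k$ and $\nu$, and the identity $\gamma_0^k = -1/\theta^k$ from Lemma~\ref{lem-gamma0}, should collapse the bound into the closed form $\gamma \le \gamma_2^k = \gamma_0^k\bigl(1 - \nu^3/(16 L^2 \theta^k)\bigr)$ advertised in the statement. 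The third bound $\gamma_3^k = \gamma_0^k\bigl(2-\sqrt[3]{\alpha\cdot 3\sqrt{2}/2}/3\bigr)$ enters from a parallel, $L$-independent argument that controls the subleading part of the switching time expansion itself (the $o((\gamma-\gamma_0^k)^{1/2})$ tail in Eq.~(\ref{eq-Ti_approx_mk2})) against the $(1-\alpha)$ slack; solving the resulting cubic scaling inequality produces the stated constant $\kappa$.

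The main obstacle is turning the purely asymptotic $o(\cdot)$ remainders in Lemmas~\ref{lem-STapprox} and~\ref{lem-Japprox} into the quantitative, $L$-indexed bounds required for a finite step size window. The key mechanism is that the Lipschitz constant on $\ddot{d}^k$ translates quadratic-in-time-displacement errors into linear-in-$\gamma$ errors after the square-root time--$\gamma$ relationship from Lemma~\ref{lem-d_properties} is used, which is exactly what pairs with a $(\gamma-\gamma_0^k)^{1/2}$-order sufficient descent slope. Once that translation is made carefully, intersecting the three constraints $\gamma \le \gamma_1^k$ (differentiability), $\gamma\le \gamma_2^k$ (Lipschitz-based remainder domination), and $\gamma\le \gamma_3^k$ (higher-order switching time correction), and taking $\hat\gamma^k$ to be their minimum, completes the argument.
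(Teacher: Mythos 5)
Your overall strategy---decompose $J^k(\gamma)-J^k(0)$ into the leading term $s_2^k(\gamma-\gamma_0^k)^{1/2}$ plus a remainder $R(\gamma)$, and use the Lipschitz constant from Lemma~\ref{lem-lipschitz} to upgrade the asymptotic $o(\cdot)$ bounds to quantitative ones---is in the right spirit, but as written it has a genuine gap: it cannot produce the specific thresholds $\gamma_2^k$ and $\gamma_3^k$ in the statement, and the step where the constants are supposed to ``collapse into the closed form'' is precisely where the work lies and is never carried out. Two concrete problems. First, your route makes $\gamma_2^k$ depend on $\alpha$: you reduce the claim to $C(L,\nu,\theta^k)(\gamma-\gamma_0^k) < (1-\alpha)|s_2^k|(\gamma-\gamma_0^k)^{1/2}$, so the resulting threshold carries a factor $(1-\alpha)^2$; but the stated $\gamma_2^k = \gamma_0^k(1-\nu^3/(16L^2\theta^k))$ is $\alpha$-free. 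In the paper's argument $\gamma_2^k$ plays a different role: it is purely a domain-of-validity condition guaranteeing $\tau_i(\gamma)\le \ddot{d}^k_{\sigma_{i+\omega_i}}(T_i(\gamma_0^{k^+}))/(2L)$ so that the mean-value-theorem bounds on $d^k$, $\dot{d}^k$, $\ddot{d}^k$ hold, and all of the $\alpha$-dependence is isolated in $\gamma_3^k$ through the condition $\beta(\gamma)^3\ge \alpha\tfrac{3\sqrt{2}}{2}$ with $\beta(\gamma)=1+3\theta^k(\gamma-\gamma_0^k)$. Second, bounding the integrated remainder $R(\gamma)=J^k(\gamma)-\hat{J}^k(2;\gamma)$ by $C(\gamma-\gamma_0^k)$ requires controlling not only the $o((\gamma-\gamma_0^k)^{1/2})$ tail of the switching-time expansion in Lemma~\ref{lem-STapprox} but also the second-order term of $J$ in $\mathcal{T}$ (the $o(|\tau(\gamma)|)$ remainder in Lemma~\ref{lem-Japprox}); quantifying that introduces a bound on the second derivative of $J$ with respect to the switching times, a constant that appears nowhere in $\gamma_2^k$ or $\gamma_3^k$, so your window would be strictly smaller than the advertised one.

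The paper avoids both issues by comparing derivatives rather than integrated quantities. Writing $H(\gamma):=-\alpha\sqrt{2}\,\textrm{card}(I_2^k)(\theta^k)^2\nu^{-1/2}(\gamma-\gamma_0^k)^{1/2}$ as a lower bound for the right-hand side of Eq.~(\ref{eq-t2suff_dec}), it shows $DJ^k(\gamma)<DH(\gamma)$ on $(\gamma_0^k,\hat{\gamma}^k]$. The key observation is that, via Eq.~(\ref{eq-DJgamma}), Lemma~\ref{lem-DTgamma}, and the implicit equation (\ref{eq-Tigamma_form}), the derivative has the \emph{exact} closed form $DJ^k(\gamma)=\sum_{i\in I_2^k}(-1)^{\omega_i}d^k_{\sigma_{i+\omega_i}}(T_i(\gamma))^3/\dot{d}^k_{\sigma_{i+\omega_i}}(T_i(\gamma))$, so no Taylor remainder of $J$ in $\mathcal{T}$ ever needs to be estimated; only pointwise mean-value-theorem bounds on $d^k(T_i(\gamma))$, $\dot{d}^k(T_i(\gamma))$, and $\tau_i(\gamma)$ (valid exactly when $\gamma\le\gamma_2^k$) are required, and the final comparison reduces to the cubic inequality in $\beta(\gamma)$ that defines $\gamma_3^k$. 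If you want to salvage your decomposition, you would need to (i) separate the $L$-based validity condition from the $\alpha$-based comparison, and (ii) replace the integrated remainder bound with the exact derivative formula so that no additional second-order constants enter.
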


\new{
Lemma~\ref{lem-suff_dec} guarantees there will be a step size that satisfies the type-2 sufficient descent condition.  In practice, $\gamma_2^k$ cannot be calculated directly because the Lipschitz constant $L$ is unknown and so backtracking is used instead to find a step size that satisfies sufficient descent. 
}

\subsection{Backtracking \label{sec-backtracking}}
Calculating $\hat{\gamma}^k=\min\{\gamma_1^k,\gamma_2^k,\gamma_3^k\}$ directly is computationally inefficient due to $\gamma_2^k$.  Calculating $\gamma_1^k$ and $\gamma_3^k$ is possible though: $\gamma_1^k$ is the nearest  $\gamma>\gamma_0^k$ to $\gamma_0^k$ for which $J^k(\gamma)$ is not differentiable and therefore, $\gamma_1^k$ is calculated from knowledge of the critical times of $u^k$ and $d^k$; $\gamma_3^k$ is a constant scaling from $\gamma_0^k$.  Calculating $\gamma^k_2$ requires a priori knowledge of the Lipschitz constant $L$. Similar to smooth finite dimensional optimization \cite{armijo,lemarechal}, it is more efficient to calculate a step size that satisfies the sufficient descent criteria using a backtracking method than it is to calculate $\gamma_2^k$ and thus $\hat{\gamma}^k$ directly. We wish to sample $(\gamma_0^k,\gamma_3^k)$ to find a $\gamma$ that satisfies sufficient descent: set $\gamma^k(j) := (\gamma_3^k-\gamma_0^k)\beta^j+\gamma_0^k$ where $\beta\in(0,1)$ and define $j^k\in\{0,1,\ldots\}$ as
\begin{equation}
j^k := \min\{j = 0,1,\ldots| J^k(\gamma^k(j)) - J^k(0)| < \alpha s^k_2 (\gamma^k(j)-\gamma_0^k)^\frac{1}{2}\}.
\label{eq-jk}
\end{equation}
Then, $\gamma^k:= \gamma^k(j^k)$ satisfies the sufficient descent condition.  

\new{
The following algorithm calculates $\gamma^k$ using backtracking for $\overline{m}^k =1$ or $2$ even though we only analyze $\overline{m}^k =2$.  It should be implemented as an inner loop of Algorithm~\ref{alg-iter_update}.

\begin{framed}
\begin{alg}\label{alg-backtracking}%
Set $j^k = 0$ and calculate $s^k_{\overline{m}^k}$ from Eq.~(\ref{eq-sk2}).  
\begin{enumerate}
\item If $J^k(\gamma^k(j^k)) - J^k(0) < \alpha s^k_{\overline{m}^k} (\gamma^k(j^k)-\gamma_0^k)^\frac{1}{\overline{m}^k}$ then return $\gamma^k = \gamma^k(j^k)$ and terminate.
\item Increment $j^k$ and repeat from Step 1.
\end{enumerate}
\end{alg}
\end{framed}
}

The number of backtracking steps is finite:

\begin{lem}[Backtracking]
If $\overline{m}^k = 2$ and there exists $b_1>0$ and $b_2>0$ such that $\theta^k<-b_1$ and for each of the $i\in I^k_2$, $\ddot{d}^k_{\sigma_{i+\omega_i}}(T_i(\gamma))>b_2$, then $j^k$ is finite.
\label{lem-backstep}
\end{lem}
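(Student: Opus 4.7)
The plan is to derive the finiteness of $j^k$ as a direct consequence of Lemma~\ref{lem-suff_dec}. The key observation is that the backtracking samples $\gamma^k(j) = (\gamma_3^k - \gamma_0^k)\beta^j + \gamma_0^k$ form a strictly decreasing sequence (since $\beta \in (0,1)$ and, as I verify below, $\gamma_3^k > \gamma_0^k$) that converges to $\gamma_0^k$ from above as $j \to \infty$. Meanwhile, Lemma~\ref{lem-suff_dec} guarantees that every $\gamma \in (\gamma_0^k, \hat{\gamma}^k]$ satisfies the type-2 sufficient descent condition, where $\hat{\gamma}^k = \min\{\gamma_1^k, \gamma_2^k, \gamma_3^k\}$. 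If $\hat{\gamma}^k - \gamma_0^k > 0$ strictly, then $\gamma^k(j)$ eventually enters $(\gamma_0^k, \hat{\gamma}^k]$, so the backtracking terminates at some finite $j^k$ bounded above by $\lceil \log_\beta \bigl((\hat{\gamma}^k - \gamma_0^k)/(\gamma_3^k - \gamma_0^k)\bigr) \rceil$.

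The crux is therefore to verify $\hat{\gamma}^k > \gamma_0^k$ under the stated hypotheses by considering each of $\gamma_1^k, \gamma_2^k, \gamma_3^k$ in turn. First, $\gamma_1^k > \gamma_0^k$ follows from the fact that the set $\Gamma^k$ is finite (Lemma~3 of \cite{caldwell_murphey_cdc12}), so $J^k(\cdot)$ is differentiable on some nonempty right neighborhood of $\gamma_0^k$. Next, $\gamma_3^k > \gamma_0^k$ is algebraic: since $\alpha \in (0,1)$, the cube root term $\sqrt[3]{\alpha\,3\sqrt{2}/2}$ is strictly less than $\sqrt[3]{3\sqrt{2}/2} < 3$, so the scaling factor $2 - \sqrt[3]{\alpha\,3\sqrt{2}/2}/3 > 1$.

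The final and most delicate piece is $\gamma_2^k > \gamma_0^k$, which is exactly where the hypotheses $\theta^k < -b_1$ and $\ddot{d}^k_{\sigma_{i+\omega_i}}(T_i(\gamma)) > b_2$ enter. By Lemma~\ref{lem-lipschitz}, the Lipschitz constant $L$ is finite (and positive since $\overline{m}^k = 2$ rules out the degenerate case). The hypothesis on $\ddot{d}^k$ implies $\nu := \min_{i \in I^k_2} \ddot{d}^k_{\sigma_{i+\omega_i}}(T_i(\gamma_0^{k^+})) \geq b_2 > 0$, so $\nu^3 > 0$; combined with $\theta^k < -b_1 < 0$, the term $-\nu^3/(\theta^k\,16L^2)$ is strictly positive, yielding
\[
\gamma_2^k = \gamma_0^k\left(1 - \frac{\nu^3}{\theta^k\,16L^2}\right) > \gamma_0^k.
\]
Combining the three bounds gives $\hat{\gamma}^k > \gamma_0^k$, and finiteness of $j^k$ follows from the geometric decay argument above.

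The main obstacle is essentially bookkeeping: verifying the strict inequalities $\gamma_i^k > \gamma_0^k$ for $i = 1,2,3$ under the hypotheses. None of these requires heavy machinery once Lemma~\ref{lem-suff_dec} and Lemma~\ref{lem-lipschitz} are in hand, and the role of the constants $b_1, b_2$ is precisely to ensure the strict positivity of $\gamma_2^k - \gamma_0^k$ (which is in fact bounded away from zero uniformly in $k$, a stronger property that will likely be useful for the downstream convergence analysis in Section~\ref{sec-min_seq}, though not required for the present finiteness claim).
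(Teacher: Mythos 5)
Your proof is correct and follows essentially the same route as the paper's: reduce the claim to $\hat{\gamma}^k>\gamma_0^k$ via Lemma~\ref{lem-suff_dec}, verify that each of $\gamma_1^k,\gamma_2^k,\gamma_3^k$ is strictly separated from $\gamma_0^k$ (with the hypotheses on $\theta^k$ and $\ddot{d}^k$ handling $\gamma_2^k$ and $\gamma_3^k$), and conclude via the geometric decay of $\gamma^k(j)-\gamma_0^k$. The only minor difference is that for $\gamma_1^k>\gamma_0^k$ the paper invokes Lemma~\ref{lem-continuity_of_T} rather than just the finiteness of $\Gamma$, since differentiability of $J^k(\gamma)$ also requires the switching times to avoid critical points of $\mu_{\sigma_i\sigma_{i+1}}$ on a right neighborhood of $\gamma_0^k$, not merely a locally constant mode sequence.
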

\begin{proof}
The proof follows from Lemmas~\ref{lem-continuity_of_T} and \ref{lem-suff_dec}.  According to Lemma~\ref{lem-suff_dec}, $\hat{\gamma}^k = \min \{\gamma_1^k,\gamma_2^k,\gamma_3^k\}$ satisfies the sufficient descent condition.  From Lemma~\ref{lem-continuity_of_T}, $\gamma_1^k$ is bounded from $\gamma_0^k$.  Furthermore, by the bounds on $\theta^k$ and  $\ddot{d}^k_{\sigma_{i+\omega_i}}(T_i(\gamma))$, $\gamma_2^k$ and $\gamma_3^k$ are bounded from $\gamma_0^k$.  Let $b_3>0$ be this minimal bound of $\hat{\gamma}^k = \min\{\gamma_1^k,\gamma_2^k,\gamma_3^k\}$ from $\gamma_0^k$.  Then, 
\[
j^k = \textrm{ceil}\left(\log_\beta\frac{b_3}{\gamma_3^k - \gamma_0^k}\right)
\]
which is finite, where the function $\textrm{ceil}:\setR\rightarrow\setZ$ rounds to the nearest integer of greater value.
\end{proof}

\subsection{Locally Infimizing Sequence \label{sec-min_seq}}
For the type-2 sufficient descent condition, we have shown backtracking will find a $\gamma^k$ for which the condition is satisfied.  In the following lemma, we find that if $\{x^k,u^k\}$ is the sequence calculated from Algorithm~\ref{alg-iter_update} initialized with $(x^0,u^0)\in\mathcal{S}$ where there is an infinite subsequence of $\{x^k,u^k\}$ for which $\overline{m}^k = 2$, then the optimality function $\theta^k$ goes to zero.  

\begin{lem}[Infimizing Sequence]
\label{lem-min_seq}
Suppose $(x^0,u^0)\in\mathcal{S}$ and $S = \{x^k,u^k\}$ is an infinite sequence where
\begin{enumerate}
\item $J(x^0,u^0)=\overline{J}<\infty$,
\item $J(x,u)$ is bounded below for all $(x,u)\in\mathcal{S}$,
\item $J(x^{k+1},u^{k+1})<J(x^k,u^{k})$, and
\item $S_2\subset S$ is an infinite subsequence where each $(x^{k+1},u^{k+1})\in S_2$ is $\\(x^{k+1},u^{k+1}) = \mathcal{P}(x^k,u^{k}-\gamma^{k} d^{k})$ and
\begin{enumerate}
\item $\overline{m}^k = 2$ (see Eq.~(\ref{eq-mk})),
\item $\gamma_2^{k}<\gamma_1^{k}$ or $\gamma_3^k<\gamma_1^{k}$ (see Lemma~\ref{lem-suff_dec}),
\item there is $K_2>0$ such that for each $i\in I^k_2$, $\ddot{d}^{k}_{\sigma_{i+\omega_i}}(T_i(\gamma_0^{k}))\geq K_2|\theta^k|$, and
\item $\gamma^{k} = (\gamma_3^{k}-\gamma_0^{k})\beta^{j^{k}}+\gamma_0^{k}$ (see Eq.~(\ref{eq-jk})).
\end{enumerate}
\end{enumerate}
then, $\lim_{k\rightarrow \infty} \theta^k = 0$.
\end{lem}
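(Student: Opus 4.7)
The plan is to argue by contradiction on $\theta^k \to 0$, using the telescoping structure of the cost. Since $\{J(x^k,u^k)\}$ is strictly monotonically decreasing (condition~3), bounded above by $\overline{J}$, and bounded below, it converges. Hence the summands $J(x^k,u^k) - J(x^{k+1},u^{k+1})$ are summable and tend to zero. Restricting to $S_2$ and applying the type-2 sufficient descent condition of Definition~\ref{def-suffdesc} yields
\[
0 < -\alpha s_2^k (\gamma^k - \gamma_0^k)^{1/2} \leq J(x^k,u^k) - J(x^{k+1},u^{k+1}) \to 0,
\]
so $|s_2^k|(\gamma^k - \gamma_0^k)^{1/2}$ vanishes on $S_2$. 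It then suffices to show that the contradiction hypothesis $\theta^k \not\to 0$ forces this same quantity to be bounded below by a positive constant on a subsequence.

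I would extract a sub-subsequence of $S_2$ on which $|\theta^k| \geq c > 0$; the companion upper bound $|\theta^k| \leq C$ follows because the iterates remain in the $\overline{J}$-sublevel set of $J$, so $x^k$ and the adjoint $\rho^k$ are bounded via Assumption~\ref{ass-C2bnd}, giving a bound on $d^k$ and therefore on $\theta^k$. Combining Eq.~(\ref{eq-sk2}) with condition~4(c), $\ddot{d}^k_{\sigma_{i+\omega_i}}(T_i(\gamma_0^{k+})) \geq K_2|\theta^k|$, yields
\[
|s_2^k| \geq \frac{\sqrt{2}(\theta^k)^2}{\sqrt{K_2|\theta^k|}} = \sqrt{2/K_2}\,|\theta^k|^{3/2} \geq \sqrt{2/K_2}\,c^{3/2},
\]
so $|s_2^k|$ is bounded below on the sub-subsequence.

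The harder part is a uniform lower bound on $\gamma^k - \gamma_0^k$. Because $j^k$ is the smallest backtracking index passing the sufficient descent test and Lemma~\ref{lem-suff_dec} guarantees the test throughout $(\gamma_0^k,\hat\gamma^k]$, the preceding index (if $j^k \geq 1$) must have overshot, giving $\gamma^k(j^k-1) > \hat\gamma^k$ and hence
\[
\gamma^k - \gamma_0^k = (\gamma_3^k - \gamma_0^k)\beta^{j^k} \geq \beta(\hat\gamma^k - \gamma_0^k);
\]
the case $j^k = 0$ is immediate. Condition 4(b) gives $\hat\gamma^k = \min\{\gamma_2^k,\gamma_3^k\}$. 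Substituting $\gamma_0^k = 1/|\theta^k|$ into the formulas from Lemma~\ref{lem-suff_dec} and invoking condition 4(c) to bound $\nu^3 \geq K_2^3|\theta^k|^3$,
\[
\gamma_2^k - \gamma_0^k \geq \frac{K_2^3|\theta^k|}{16L^2} \geq \frac{K_2^3 c}{16L^2}, \qquad \gamma_3^k - \gamma_0^k \geq \frac{1}{C}\left(1 - \frac{\sqrt[3]{\alpha\cdot 3\sqrt{2}/2}}{3}\right).
\]
Both lower bounds are positive constants, so $\gamma^k - \gamma_0^k$ is bounded below on the sub-subsequence. Multiplying the two lower bounds contradicts the first paragraph, completing the proof that $\theta^k \to 0$.

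The main obstacle will be the dual role of $\theta^k$: it amplifies the descent magnitude $|s_2^k| \sim |\theta^k|^{3/2}$ while shrinking the admissible step size $\gamma_3^k - \gamma_0^k \sim 1/|\theta^k|$, so the contradiction needs $|\theta^k|$ controlled from both sides. Conditions 4(b) and 4(c) are precisely what keeps $\hat\gamma^k - \gamma_0^k$ away from zero regardless of the scale of $|\theta^k|$; verifying that the upper bound on $|\theta^k|$ genuinely follows from the sublevel-set reasoning and Assumption~\ref{ass-C2bnd}, rather than requiring a separate hypothesis, is the most delicate bookkeeping step.
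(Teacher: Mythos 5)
Your overall architecture (telescoping of the monotone bounded cost, sufficient descent on $S_2$, contradiction via a uniform positive lower bound on $|s_2^k|(\gamma^k-\gamma_0^k)^{1/2}$, and the backtracking inequality $\gamma^k-\gamma_0^k\geq\beta(\hat{\gamma}^k-\gamma_0^k)$) matches the paper's proof. But there is a genuine gap in your lower bound on $|s_2^k|$: you invoke condition 4(c), $\ddot{d}^k_{\sigma_{i+\omega_i}}(T_i(\gamma_0^{k+}))\geq K_2|\theta^k|$, to write $|s_2^k|\geq\sqrt{2}(\theta^k)^2/\sqrt{K_2|\theta^k|}$, but since $\ddot{d}$ appears in the \emph{denominator} of each term of $s_2^k$ (Eq.~(\ref{eq-sk2})), a lower bound on $\ddot{d}$ yields an \emph{upper} bound $|s_2^k|\leq \mathrm{card}(I_2^k)\sqrt{2/K_2}\,|\theta^k|^{3/2}$, which is useless for the contradiction. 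The valid lower bound is $|s_2^k|\geq\sqrt{2}(\theta^k)^2/(\nu^k)^{1/2}$ with $\nu^k=\min_{i\in I_2^k}\ddot{d}^k_{\sigma_{i+\omega_i}}(T_i(\gamma_0^{k+}))$, and turning this into a positive constant requires either an \emph{upper} bound on $\nu^k$ (the paper uses $\nu^k\leq LT$ when the backtracking terminates at $\gamma^k=\gamma_3^k$) or the cancellation that occurs when $\gamma^k$ lands near $\gamma_2^k$: there $(\gamma^k-\gamma_0^k)^{1/2}\propto(\nu^k)^{3/2}/(|\theta^k|L)$, the product collapses to a multiple of $|\theta^k|\nu^k$, and \emph{only then} does 4(c) enter correctly as $|\theta^k|\nu^k\geq K_2(\theta^k)^2$. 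By bounding $|s_2^k|$ and $\gamma^k-\gamma_0^k$ independently you destroy exactly this cancellation, which is why you were forced to use 4(c) in the wrong direction.

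A secondary concern: your uniform upper bound $|\theta^k|\leq C$ is obtained from a sublevel-set argument, but boundedness of $J(x^k,u^k)$ does not bound $x^k$ unless $\ell$ is coercive, which Assumption~\ref{ass-C2bnd} does not require. The paper's proof avoids needing any upper bound on $|\theta^k|$: in the $\gamma^k=\gamma_3^k$ case the factor $\gamma_0^k=1/|\theta^k|$ combines with $(\theta^k)^2$ to produce $|\theta^k|^{3/2}/(\nu^k)^{1/2}\rightarrow 0$ directly, and in the $\gamma_2^k$ case the limit statement is $(\theta^k)^2\rightarrow 0$ with no constant $C$ appearing. To repair your proof, split into the same two cases according to which of $\gamma_2^k,\gamma_3^k$ governs the accepted step, and keep $|s_2^k|$ and the step length coupled rather than bounding them separately.
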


Lemma~\ref{lem-min_seq} provides conditions for which a sequence of switched system trajectories $\{x^k,u^k\}$ are guaranteed to be an infimizing sequence through the guarantee that $\theta^k\rightarrow 0$. Such infimizing sequences can be computed through the iterative update Algorithm~\ref{alg-iter_update} with an inner loop of Algorithm~\ref{alg-backtracking} for type-2 sufficient descent through backtracking. 

Lemma~\ref{lem-min_seq}, states that if the sequence of costs $\{J(x^k,u^k)\}$ is monotonically decreasing and an infinite subset of the iterative updates in Algorithm~\ref{alg-iter_update} satisfy assumptions 4a-4d in the lemma, then the algorithm converges to an infimum. The restrictive assumptions are 4b and 4c. As for assumption 4a and 4d: assumption 4a requires that the greatest switching time type is 2 while assumption 4d requires that the step size is computed through backtracking, Algorithm~\ref{alg-backtracking}. 

For assumption 4b, recall $\gamma_1^k$ is the maximum step size for which $J^k(\gamma)$ is differentiable for all $\gamma\in (\gamma_0^k,\gamma_1^k)$. Since the approximation used by the steepest descent condition is only valid for intervals of $\gamma$ where $J^k(\gamma)$ is differentiable, Lemma~\ref{lem-min_seq} cannot guarantee $\theta^k\rightarrow 0$ if $\gamma_1^k-\gamma_0^k$ goes to $0$ faster than $\theta^k$, which can happen when multiple mode insertions occur near the same time. Violations to assumption 4b can be assessed through comparing the sequence $\{\gamma_1^k-\gamma_0^k\}$ to $\{\theta^k\}$ during execution.

Like with assumption 4b, violations to assumption 4c can be checked as part of an iterative algorithm by comparing the sequence $\{\ddot{d}^{k}_{\sigma_{i+\omega_i}}(T_i(\gamma_0^{k}))\}$ to $\{\theta^k\}$. When a violation to either assumption 4b or 4c occurs, a number of strategies are viable to correct the violation while maintaining the convergence guarantee of Lemma~\ref{lem-min_seq} which need only maintain the decreasing monotonicity of $\{J(x^k,u^k)\}$. Such strategies could execute a step of switching time optimization, mask specific time intervals of the mode insertion gradient, or employ a sign preserving transformation to the mode insertion gradient.  Analyzing such strategies is future work.

It is worth noting that violating assumptions 4b and 4c do not affect the optimality condition, only the convergence of the iterative algorithm.

\new{
\section{Mode Scheduling Algorithm}
\label{sec-mode_sched}
The complete mode scheduling algorithm is as follows. 

\begin{framed}
\begin{alg}\label{alg-mode_sched}
With $(x^0,u^0)\in\mathcal{S}$, $\theta_{stop}<0$, set $k=0$ and execute

While $\theta^k < \theta_{stop}$
\begin{enumerate}
\item \hspace{4pt} Compute $\rho$, i.e. solve Eq.~(\ref{eq-rhodot})
\item \hspace{4pt} $d^k_{\sigma} = \rho^T(f_\sigma(x^k) - F(x^k,u^k))$, $\sigma\in\{1,\ldots,N\}$
\item \hspace{4pt} $\theta^k = \min_{\sigma\in\{1,\ldots,N\}, t\in[0,T]} d^k_\sigma(t)$
\item \hspace{4pt} $\gamma_0^k = -1/\theta^k$
\item \hspace{4pt} Compute $\mathcal{T}(\gamma_0^+)$ 
\item \hspace{4pt} Compute $\overline{m}^k$, Eq.~(\ref{eq-mk})  
\item \hspace{4pt} If $\overline{m}^k \neq 1$ or $2$, Return failure
\item \hspace{4pt} Compute $I_{\overline{m}^k}^k$, Eq.~(\ref{eq-mk})  
\item \hspace{4pt} Compute $s^k_{\overline{m}^k}$, Eq.~(\ref{eq-sk1}) or \ref{eq-sk1}
\item \hspace{4pt} Compute $\gamma^k$, i.e. backtrack using Algorithm~\ref{alg-backtracking}
\item \hspace{4pt} $(x^{k+1},u^{k+1}) = \mathcal{P}(x^k,u^k-\gamma^kd^k)$
\item \hspace{4pt} $(\Sigma^{k+1},\mathcal{T}^{k+1}) = u^{k+1}$
\item \hspace{4pt} Check for convergence failure, 4b and 4c in Lemma~\ref{lem-min_seq}
\item \hspace{4pt} k = k+1
\end{enumerate}
\end{alg}
\end{framed}

The following remarks are with respect to implementing the mode scheduling algorithm:
\begin{enumerate}
\item For step 3, $\theta^k$ depends on the infimizing time of the mode insertion gradient. Additionally, for step 11, $\mathcal{P}$ depends on the times that the curve of $u^k-\gamma^kd^k$ with maximal value changes. For implementation, both operations call for finding these critical times on curves. To find them, we partition $[0,T]$ into subintervals and determine which intervals contain a critical time. Then, we find a critical time on an interval through bisection. In order to not miss a critical time, the partition must be sufficiently fine to ensure that each subinterval contains at most one. Note that the critical times could be the switching times of $\mathcal{T}^k$.
\item The initial switching times $\mathcal{T}(\gamma_0^+)$ in step 5 is the union of the previous switching times $\mathcal{T}^k$ with the times that minimize the mode insertion gradient in step 3. Let $(\sigma', T')$ be the minimizers of step 3. If $\dot{d}_{\sigma'}(T') = 0$, then $\mathcal{T}(\gamma_0^+)$ has two new switching times at $T'$, one increasing in value with $\gamma$ and one decreasing. If $\ddot{d}_{\sigma'}(T') \neq 0$, then these switching times are type-2. If $T'$ is a discontinuity time of $d_{\sigma'}$, then $T'$ must coincide with a switching time of $\mathcal{T}^k$ since $\ddot{d}^k$ is Lipschitz for time intervals between adjacent switching times. In this case, $\mathcal{T}(\gamma_0^+)$ has a single new switching time at $T'$, where $T'$ increases or decreases with $\gamma$ depending on whether $d_{\sigma'}(T'^-)<d_{\sigma'}(T'^+)$ or not.
\item Step 13 checks that the assumptions 4b and 4c in Lemma~\ref{lem-min_seq}, as discussed at the end of Section~\ref{sec-min_seq} are not violated.
\item Depending on how one represents $u^k$ and $(\Sigma^k,\mathcal{T}^k)$ in implementation, step 12 may not be needed.
\end{enumerate}
}

\section{Example}
\label{sec-eg}
\new{
We provide two examples. The first is concerned with a simple vehicle tracking a desired maneuver where the operational modes differ in vehicle velocity and steering angle.  The second example is concerned with responding to a multimachine power network disturbance through controlling a hybrid component that switches the network admittance. For both examples we apply mode scheduling in Algorithm~\ref{alg-mode_sched}.

\subsection{Vehicle Tracking}
As a simple example, we consider a vehicle moving in the plane that can switch between four modes of operation. The state of the system is $x = [X,Y,\psi]^T$ where $(X,Y)$ is the vehicle position and $\psi$ is its orientation. The $\sigma^\textrm{th}$, $\sigma\in\{1,\ldots,4\}$ mode of operation is:
\[
\dot{x}(t) = f_\sigma(x(t))=\left[\begin{array}{l}\dot{X} \\ \dot{Y} \\ \dot{\psi}\end{array}\right](t) = \left[\begin{array}{c} v_\sigma(t)\cos{\psi(t)} \\ v_\sigma(t)\sin{\psi(t)} \\ \omega_\sigma(t) \end{array}\right]
\]
where $v_\sigma$ is the mode's speed and $\omega_\sigma$ is its steering angle with values given in Table \ref{tab-veh_modes}.

\begin{table}[h!]
\centering
\begin{tabular}{c|c|c}
$\sigma$ & $v_\sigma$ & $\omega_\sigma$ \\
\hline
1 & 4.5 & $\pi/3$ \\
2 & 4.5 & $-\pi/3$ \\
3 & 2 & $\pi/3$ \\
4 & 2 & $-\pi/3$.
\end{tabular}
\caption{Velocities and steering angles of the vehicle's modes of operation.}
\label{tab-veh_modes}
\end{table}
That is, the vehicle can turn left or right at two different speeds. The goal is to track the following desired trajectory from initial configuration $x_0 = [0,0,0]^T$:
\[
x_d(t) = \left[
\begin{array}{c}
X_d(t)\\
Y_d(t)\\
\psi_d(t)
\end{array}
\right]
= \left[
\begin{array}{c}
6.5 - 4\cos(t) \\
-1.5 + 4\sin(t) \\
-t + \pi/2.
\end{array}
\right].
\]
The desired trajectory travels a circle of radius $4$ with and center $[6.5,-1.5]^T$ at 1 rad/s counterclockwise starting at $x_d = [2.5,-1.5,\pi/2]^T$. The cost is set as the error from desired trajectory---i.e. $\ell(x(t),u(t)) = 1/2 (x(t)-x_d(t))^T(x(t)-x_d(t))$. 

We successfully execute Algorithm~\ref{alg-mode_sched} with backtracking parameters $\alpha = 0.4$ and $\beta = 0.4$ for 50 iterations.  The results are shown in Figure~\ref{fig-vehicle_results}. The cost drops from $J(x^0,u^0) = 276.37$ to $J(x^{50},u^{50}) = 1.30$ with a significant initial reduction in the first 6 iterations to $J(x^6,u^6) = 1.58$. The optimality function increases from $\theta^0 = -588.67$ to $\theta^{50} = -0.81$. Again, there is a significant change in the first 6 iterations to $\theta^6 = -2.93$.  In Figure~\ref{fig-vehicle_results}d, the $(X,Y)$ plots of the trajectory show that after an initial transition period to the desired trajectory, the vehicle decently tracks the trajectory for the remainder of the time horizon, even by the $6^{\textrm{th}}$ iteration. 

\begin{figure*}
\centering
\def\svgwidth{0.95\textwidth}
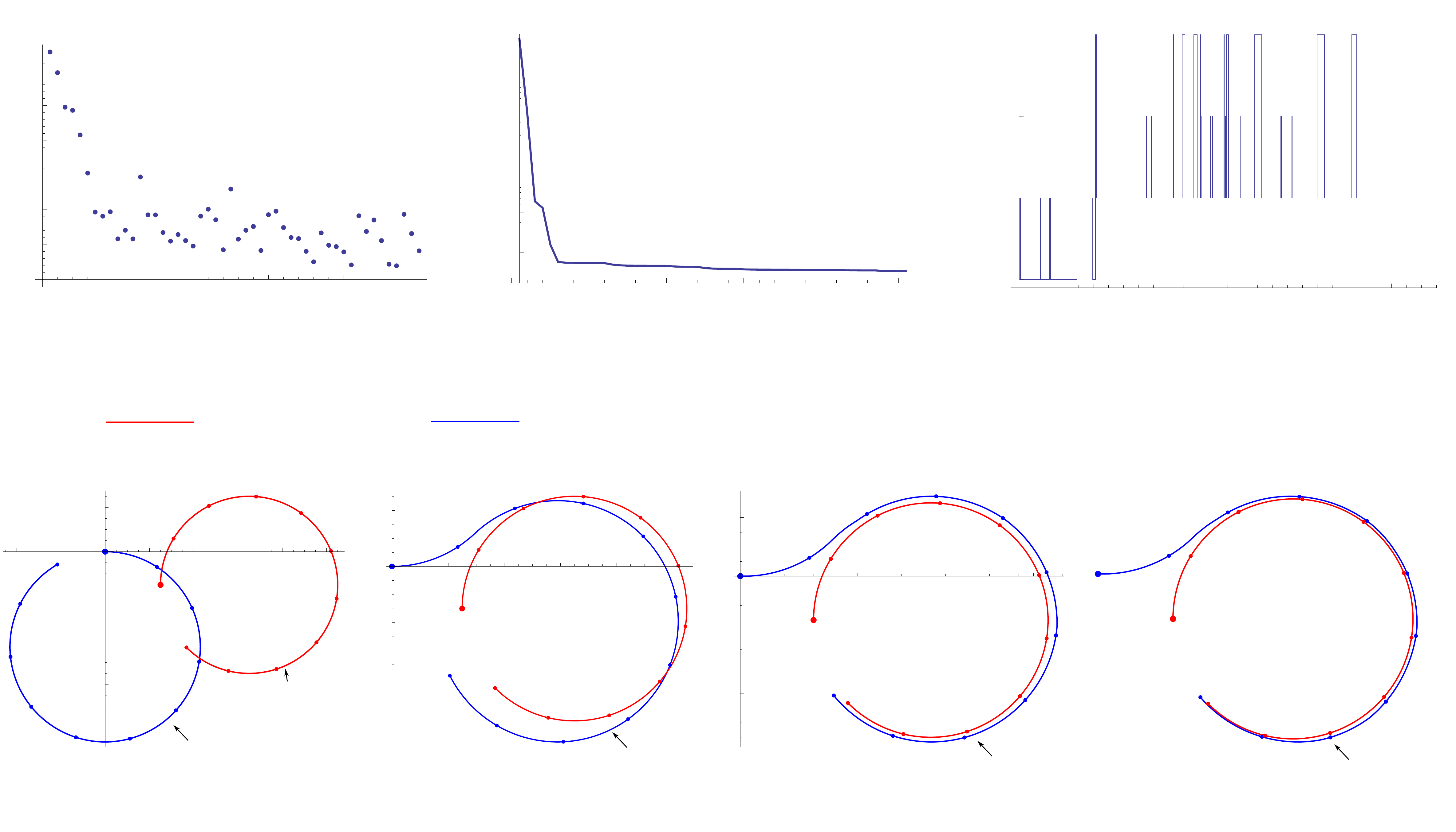
\caption{\textbf{a}) Convergence of optimality function toward zero as a function of iteration.  \textbf{b}) Cost $J$ as a function of iteration.  A large reduction occurs in the first 6 iterations. \textbf{c}) Switching control illustrating the schedule of the 50th iteration. \textbf{d}) Comparison of the $(X,Y)$ trajectories per iteration.}
\label{fig-vehicle_results}
\end{figure*}
}

\subsection{Multimachine Power Network}
Due to the complex interconnectedness of multimachine power networks, it is unclear how to actively reject a disturbance.  The solution we propose is to compute a schedule for physical switches that connect and disconnect capacitors from the network so that system performance improves.  A power network is often modeled as a synchronous machine where the dynamics are given by the swing equations \cite{grainger_stevenson}.  The swing equations are second-order nonlinear differential equations which dictate the evolution of each generator's rotor angle.  The rotors are assumed to be spinning at a constant frequency\textemdash e.g. 60 Hz\textemdash but each rotor's relative phase may not be constant. This is a standard assumption.  The evolution of a single rotor is dictated by the difference of its relative phase with its neighboring generators as well as the admittance of the adjacent power lines and buses. Through switching capacitors, the power lines' admittance value switches, effectively splitting the system dynamics into distinct operating modes dependent on the position of the switches. The only control authority we impose is through the switches.

\new{
The dynamics for a single mode according to the swing equations are as follows: Let $\delta_i(t)$ be the rotor phase of generator $i$ relative to a reference generator, generator 0.  The evolution of the $i^\textrm{th}$ rotor is dictated by the difference of the mechanical power input with the electrical power output:
\begin{equation}
\frac{2H_i}{\omega_s}\ddot{\delta}_i = P_{m,i} - P_{e,i}
\label{eq-pn_dynamics}
\end{equation}
where $\omega_s$, in $rads/s$ is the synchronous speed and $H_i$ is a normalized inertial constant so that the mechanical power $P_{m,i}$ and the electrical power $P_{e,i}$ are in per unit.  The terms $\omega_s$, $H_i$ and $P_{m,i}$ are assumed to be constant for the short time horizon for which the disturbance and resolution occurs. The electrical power output of generator $i$, $P_{e,i}$, depends on the difference of its rotor's relative phase with the neighboring generators' as well as the admittance of the adjacent lines and buses:
\begin{equation}
P_{e,i} = |E_i|^2G_{ii}+\sum_{j\neq i}|E_i||E_j||Y_{ij}|\cos(\delta_i-\delta_j-\psi_{ij})
\label{eq-powerout}
\end{equation}
where $E_i$ is the transient internal voltage, $G_{ii}$ is the real part of the $ii^\textrm{th}$ component of the bus admittance matrix, $Y_{ij}$ is the $ij^\textrm{th}$ component of the bus admittance matrix, and $\psi_{ij}$ is the angle of the $ij^\textrm{th}$ component of the bus admittance matrix.  
}

The example power network has topology and line and bus parameters from the IEEE 118 Bus Test Case, a 1962 study of a segment of North America's midwest grid \cite{christie}. This network is composed of 118 buses, 186 lines, 54 generators and is shown in Fig.~\ref{fig-case118_Graph}.  In addition, we chose 26 power lines to connect switched capacitor banks in series to. 
\new{Following the line numbering form the Test Case, the 26 chosen lines are 6, 9, 14, 29, 38, 39, 43, 49, 57, 59, 77, 85, 92, 100, 113, 120, 126, 129, 134, 140, 141, 153, 165, 172, 176, and 177. }
Each capacitor's capacitance is chosen so that when a switched capacitor is, ``on'', its associated line's reactance doubles.  The location of each capacitor bank is also shown in Fig.~\ref{fig-case118_Graph} and are chosen so that each generator is connected to at least one other generator for which the admittance between the two can be switched.  For this study, all 26 switches are synchronized so that all are ``on'' or ``off'' together. As such, the network has two modes of operation, $f_1$ and $f_2$---i.e. $N = 2$
\new{
---where $f_1$ and $f_2$ are given by Eq.~(\ref{eq-pn_dynamics}) with differing bus admittance matrix $Y$ in Eq.~(\ref{eq-powerout}).
}

\begin{figure*}
\centering
\includegraphics[width = .9\textwidth]{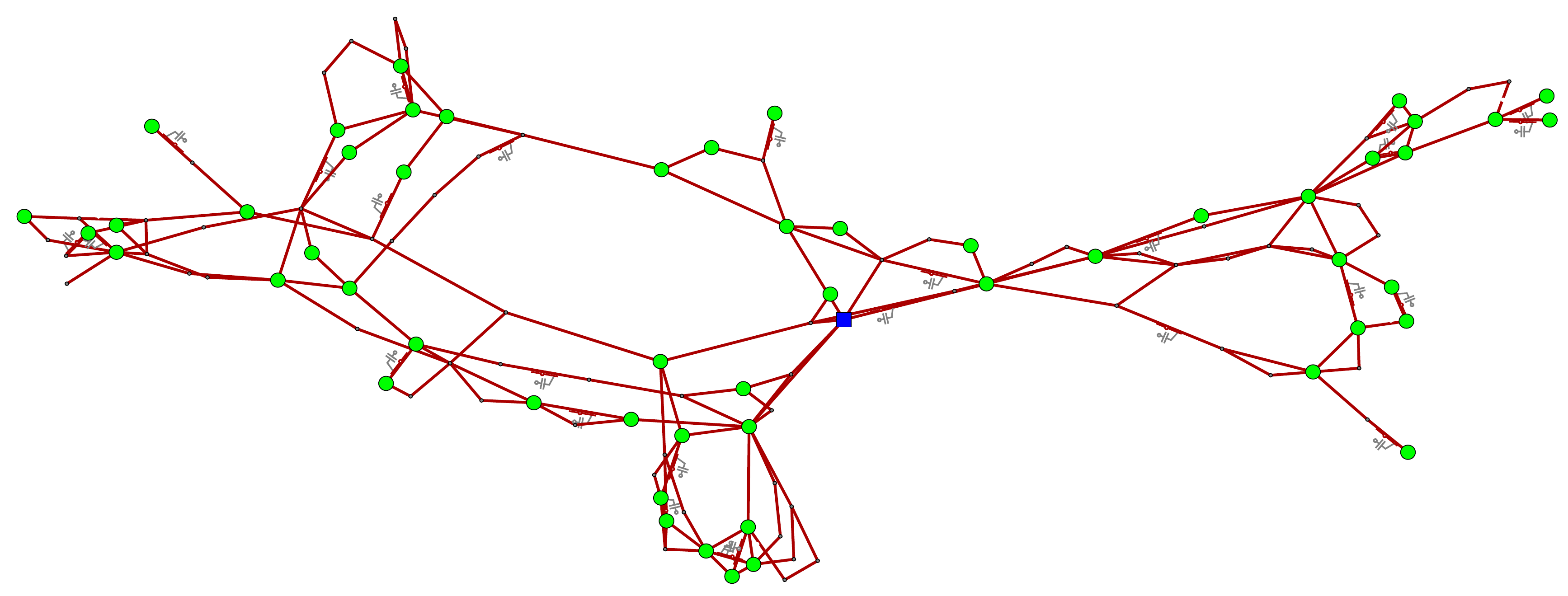}
\caption{Representation of the IEEE 118 Bus Test Case.  The network is composed of 118 buses, 186 lines, 54 generators.  The generators are marked by green circles and the reference generator is marked by the blue square.  The location of the 26 capacitor are shown.}
\label{fig-case118_Graph}
\end{figure*}

Let $\delta(t)$ be the 54 generator relative rotor phases in radians.  The state is $x(t) = [\delta(t),\dot{\delta}(t)]^T$ and the control is the scheduling of the synchronized switching of the capacitor switches.  The disturbance response problem is to schedule the capacitor switching that best improves network performance.  The disturbance is modeled as a perturbation to the initial rotor angles from steady state, $\delta_{ss}$. Such a disturbance may be due to a fault in the system or a quick change to system loads.  The initial condition is $\delta_0 = \delta_{ss}+\delta_{dist}$ where the disturbance $\delta_{dist}$ is a vector of random numbers from a uniform distribution between $[-0.3,0.3]$ radians.  

For the disturbance used in the example, integration of the swing equations reveals that without control, the system diverges from synchronized operation within 60 seconds (see Fig.~\ref{fig-power_net_results}(d) for no control). We provide results for two different approaches to reject the disturbance. The first is to compute through the projection-based mode scheduling, Algorithm~\ref{alg-iter_update}, the optimal schedule for the first $T = 5$ seconds following the disturbance.  The second is to apply a sliding window (receding horizon) approach of duration $T = 5$ seconds with a new schedule computed and applied each $0.1$ seconds.

\new{
Mode scheduling for power networks was executed on a core i7-3770K processor. The algorithm was implemented in C++. All differential equations were solved using GNU Scientific Libraries' implementation of explicit embedded Runge-Kutta Prince-Dormand 8, 9 method.
}

\subsection{Optimal Schedule}
We apply optimal mode scheduling, Algorithm~\ref{alg-iter_update}, to schedule the capacitor switches in order to respond to the disturbance. Let mode 1 be the dynamics with all capacitor switches ``off'' while mode 2 be the dynamics with all capacitor switches ``on''.  The cost is given by $\ell(x(t),u(t)) = 1/2 (\delta(t)-\bar{\delta}(t))^T(\delta(t)-\bar{\delta}(t)) + 1/40 (\dot{\delta}(t)-2\pi f_s)^T(\dot{\delta}(t)-2\pi f_s)$ where $\bar{\delta}(t)$ is the mean rotor phase at time $t$ and $f_s$ is the generator frequency.  The backtracking parameters are set to $\alpha = 0.4$ and $\beta = 0.1$.  

The results of mode scheduling the initial 5 seconds following a disturbance for 100 iterations of the algorithm are shown in Fig.~\ref{fig-power_net_results}. We find that the rotor phases do not diverge with the computed schedule. The cost reduces from $J = 170.68$ to $J = 54.78$ (see Fig.~\ref{fig-power_net_results}b), and the optimality function increases from $\theta = -2213.71$ to $\theta = -20.32$ (see Fig.~\ref{fig-power_net_results}a).  The total number of modes in the $7^\textrm{th}$ iteration's schedule is $M^7 = 66$, while the final switching schedule has $M^{100} = 120$. The schedules at the $7^\textrm{th}$ and $100^\textrm{th}$ iteration are in Fig.~\ref{fig-power_net_results}c.  

For the initial iterations in which $(x^k,u^k)$ are far from an infima, both the optimality function (see Fig.~\ref{fig-power_net_results}a) and the cost (see Fig.~\ref{fig-power_net_results}b) reduce significantly, which is a phenomenon that occurs with first-order smooth numerical optimization methods like steepest descent.  Since most of the gained performance occurs in the first few iterations, it is reasonable to expect that a sliding window real-time approach is viable.  Such an approach computes only the first few control synthesis iterations for each window.  

\begin{figure*}
\centering
\def\svgwidth{0.95\textwidth}
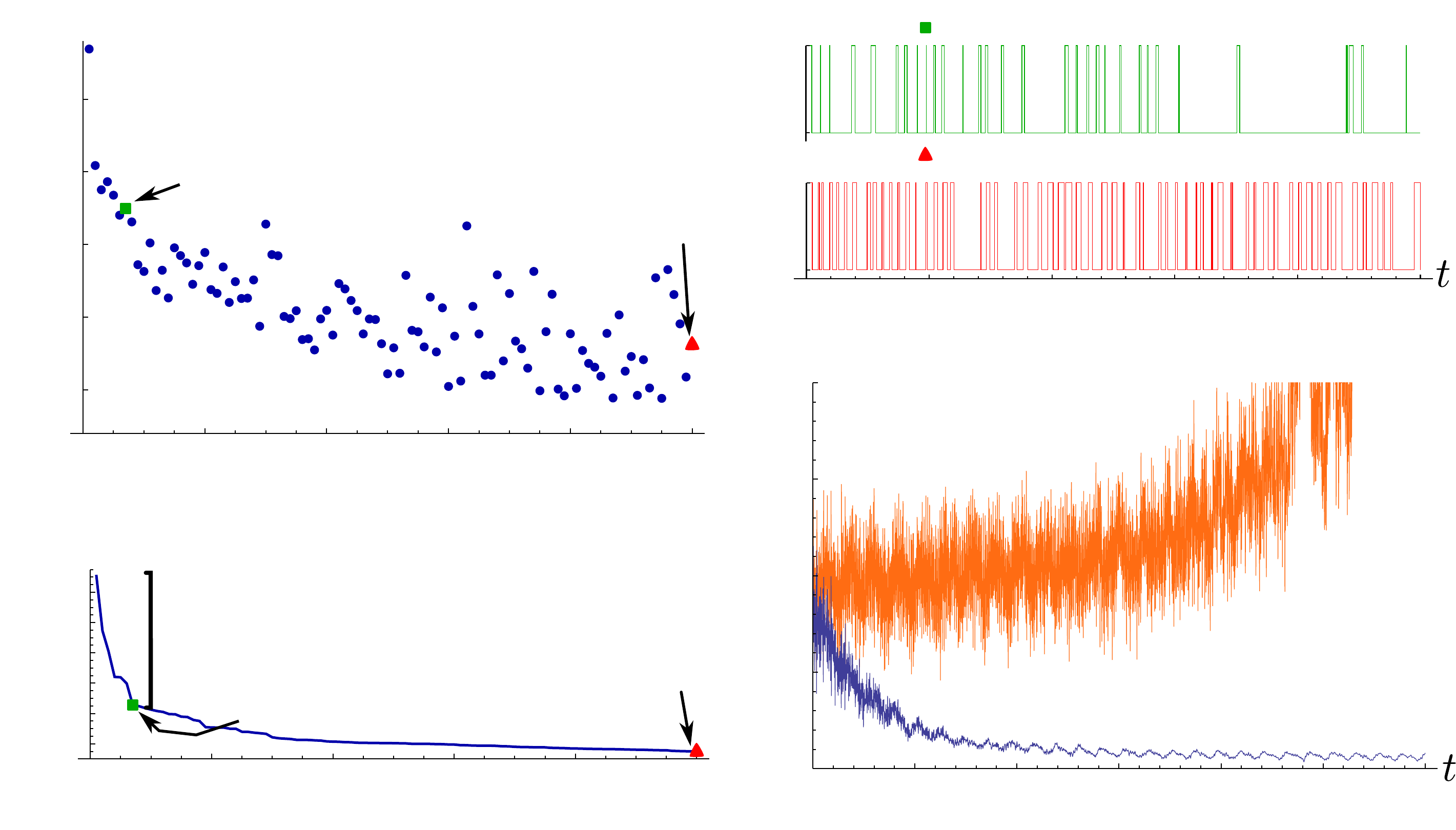
\caption{\textbf{a}, Convergence of optimality function toward zero as a function of iteration.  \textbf{b}, Cost $J$ as a function of iteration.  A large reduction occurs in the first 7 iterations. \textbf{c}, Comparison of the control signal for iterations 7 and 100. \textbf{d}, Comparison of running cost for no control (orange) and sliding window control (blue).}
\label{fig-power_net_results}
\end{figure*}

\subsection{Sliding Window Scheduling}

The second strategy is a switched system model predictive receding horizon control where the projection-based mode scheduling algorithm computes the schedule for each window of a sliding window. The receding horizon approach computes a schedule for a time window of duration $T = 5$ seconds but applies it for only $dt = 0.1$ seconds before incrementing the window $dt$ seconds and repeating for the new $5$ second time window.  The current window's initial state inherits the previous window's state at time $t_{i-1}+dt$. At each time $t_i = 0,0.1,0.2,\ldots$, we compute a single projection-based mode scheduling iteration for the time interval $t\in[t_i,t_i+T]$. The goal is for a real-time active control rejection of the disturbance.

Fig.~\ref{fig-power_net_results}d compares the running cost $\ell(\cdot)$ for the sliding window result against the no control result.  Without control, the system destabilizes, while sliding window single-bit control drives the system toward synchrony. Each window's schedule took on average 1.94 seconds to compute on a core i7-3770K processor. While the current implementation is 20 times slower than real-time, it indicates that an improved implementation on a more advanced computing machine could execute the receding horizon approach real-time even for the high-dimensional IEEE 118 Bus Test Case.

\section{Conclusion}
Optimal mode scheduling is an infinite dimensional, non-smooth problem.  The presented variational approach does not discretize in time or space a priori and as such may be applied to problems with a large number of states like the 108 state IEEE 118 Bus Test Case power network.  The presented algorithm, Algorithm~\ref{alg-iter_update}, parallels derivative-based iterative optimization in that it calculates a descent direction, takes a step of size calculated by an Armijo-like line search and updates.  As proven, if the descent direction is the negative mode insertion gradient and the step size is calculated from the backtracking algorithm, Algorithm~\ref{alg-backtracking}, then there are guarantees on convergence.  Furthermore, since the algorithm parallels standard derivative-based algorithms and since all of the calculations are provided in this paper, Algorithm~\ref{alg-iter_update} is easily implementable.  The mode scheduling algorithm is applied to the problem of power network regulation and a mode scheduling sliding window approach for near real-time control is demonstrated.

\new{
As discussed in the Introduction, there are two main themes to optimal mode scheduling: averaging methods \cite{bengea_decarlo, vasudevan_etal}, and insertion methods \cite{axelsson_wardi_et_al,egerstedt_wardi_axelsson,gonzalez_vasudevan_etal,wardi_egerstedt_hale,wardi_egerstedt_acc12}, where this paper is a member of the latter. Averaging methods can be more efficient at scheduling problems for which the solution spends more time chattering, while insertion methods can be better scheduling problems with isolated transitions. It would be interesting to synthesize an ``ideal'' mode scheduler that uses both. A possible approach is a superviser that iteratively divides the time horizon into intervals for which it is expected that the solution dominantly chatters and intervals where the solution dominantly does not and commands the preferred method to its preferred interval. 
}

\section{Acknowledgements}
This material is based upon work supported by the National Science Foundation under award CMMI-1200321 as well as the  Department of Energy Office of Science Graduate Fellowship Program (DOE SCGF), made possible in part by the American Recovery and Reinvestment Act of 2009, administered by ORISE-ORAU under contract no. DE-AC05-06OR23100.

\section{Appendix}
\subsection{Summary of Notation}
\begin{tabular}{|p{0.3\textwidth}p{0.65\textwidth}|}
\hline
$\cdot^-, \cdot^+$                       &                limit from the left, limit from right. \\
$\cdot_{ab}$                                   &                difference of index $a$ with index $b$, e.g. $d_{ab} = d_a-d_b$ \\
$[0,T]$                                           &                time interval  \\
$1:\setR\rightarrow\{0,1\}$          &               Heaviside step function  \\
$D$                                               &                slot derivative of single argument function---e.g. $Dg(y) = \frac{\partial}{\partial y}g(y)$ \\
$D_i$                                             &                slot derivative of $i^\textrm{th}$ argument---e.g. $D_2g(y,z) = \frac{\partial}{\partial z}g(y,z)$ \\
$d:[0,T]\rightarrow\setR^n$         &                 mode insertion gradient, Eq.~(\ref{eq-insertion_gradient})  \\
$F:\setR^n\times\setR^N\rightarrow \setR^n$ & switched system vector field Eq.~(\ref{eq-state})\\
$f_i:\setR^n\rightarrow \setR^n$                                &                 $i^\textrm{th}$ mode \\
$\gamma\in\setR^+$                   &                step size  \\
$J:\mathcal{X}\times\mathcal{U}\rightarrow\setR$ & cost function  \\
$k$                                                  &              iterate of sequence \\
$n$                                                  &              number of states \\
$N$                                                  &              number of distinct modes \\
$M$                                                  &              number of modes in $\Sigma$ \\
$\Omega\subset\mathcal{R}\subset \mathcal{U}$      &               set of non-chattering switching controls \\ 
$\mathcal{P}:\mathcal{X}\times\mathcal{R}\rightarrow\mathcal{S}$ & max-projection, Eq.~(\ref{eq-maxP})  \\
$\mathcal{Q}:\mathcal{R}\rightarrow\Omega$ & max-mapping, Eq.~(\ref{eq-maxProjStep})  \\
$\mathcal{R}\subset\mathcal{U}$ &               $\mathcal{R} = \mathcal{Q}^{-1}(\mathcal{S})$, see Assumption~\ref{ass-v_crit} \\
$\rho: [0,T]\rightarrow\setR^n$     &               adjoint, solution to Eq.~(\ref{eq-rhodot})  \\
$\mathcal{S}\subset \mathcal{X}\times\mathcal{U}$ &               set of feasible switched system trajectories, Def.~\ref{def-S}  \\
$\Sigma$                                       &               mode sequence \\
$(\Sigma,\mathcal{T})$                &                  mode schedule, Def.~\ref{def-sched} \\ 
$\sigma_i\in\{1,\ldots,N\}$          &               $i^\textrm{th}$ mode in $\Sigma$ \\
$T_i\in\setR$                                &                 $i^\textrm{th}$ switching time  \\
$\mathcal{T}$                              &                 vector of switching times \\
$\theta$                                         &              optimality function, Eq.~(\ref{eq-thetacalc})  \\
$\mathcal{U}$                                &              set of $N$ Lebesgue integrable functions  \\
$u: [0,T]\rightarrow\setR^N$         &              switching control, Def.~\ref{def-omega}  \\
$u_a: [0,T]\rightarrow\setR$           &              $a^\textrm{th}$ index of $u$ \\
$\mathcal{X}$                                &              set of $n$ Lebesgue integrable functions  \\
$x: [0,T]\rightarrow\setR^n$         &             state, solution to Eq.~(\ref{eq-state})  \\
\hline
\end{tabular}

\subsection{Proof of Lemma \ref{lem-lipschitz}: Lipschitz condition for $\ddot{d}_{ab}(t)$ \label{app-lipschitz}}
\begin{proof}
First, $d_{ab} = \rho(t)^T(f_a(x(t)) - f_b(x(t)))$.  Consider each $t\in(\tau_1,\tau_2)$, $x(t)\in\setR^n$ and $j\in\{1,\ldots,N\}$.  
Since $\|D^2f_j(x(t))\|<K_2$, there is a $K_0>0$ and $K_1>0$ such that $\|f_j(x(t))\|\leq K_0$ and $\|Df_j(x(t))\|\leq K_1$.  Therefore, 
$\|\dot{x}(t)\| = \|F(x(t),u(t))\|\leq K_0$ and 
\[
\begin{array}{l}
\|\ddot{x}(t)\| = \|D_1F(x(t),u(t))F(x(t),u(t))\| \\\hspace{20pt}\leq\|D_1F(x(t),u(t))\|\|F(x(t),u(t))\|\leq K_0K_1
\end{array}
\]
(recall that $\dot{u}(t) = 0$ for all $t\in(\tau_1,\tau_2)$).  

From Assumption \ref{ass-C2bnd}, $F(x(t),u(t))$ is Lipschitz in state since each $D^2f_j(x(t))$ is bounded. Additionally, $F(x,u)$ is piecewise continuous in $t$ since $u\in\Omega$ is non-chattering. Therefore, through Theorem 3.2 in \cite{khalil}, the solution to the state equations, Eq.~(\ref{eq-state}), exists over $[0,T]$ and is unique.  In order to signify that the solution $x$ depends on the fixed $u\in\Omega$, we write $x(t;u)$.

Define $g(\rho(t)) := -D_1F(x(t;u),u(t))^T\rho(t) - D\ell(x(t;u))$, which exists for any $t\in(\tau_1,\tau_2)$.  Since $\|D_1F(x(t;u),u(t))\|\leq K_1$, we see that $g(\cdot)$ is Lipschitz with respect to $\rho$:
\begin{equation}
\begin{array}{l}
\|g(\rho_2(t)) - g(\rho_1(t))\| \leq \|D_1F(x(t;u),u(t))^T\|\|\rho_2(t) - \rho_1(t)\| \\\hspace{10pt} \leq K_1\|\rho_2(t) - \rho_1(t)\|.
\end{array}
\label{eq-g_lip}
\end{equation}
Furthermore, $g(\rho)$ is piecewise continuous in $t$ because $D_1F$ and $D\ell$ are both piecewise continuous since $u\in\Omega$ is non-chattering. Since $g(\rho)$ is Lipschitz in $\rho$ and piecewise continuous in $t$, the solution to Eq.~(\ref{eq-rhodot})---i.e. the solution to $\dot{\rho} = g(\rho)$---exists  over $[0,T]$ and is unique through Theorem 3.2 in \cite{khalil}.  Label this solution as $\rho(t;u)$ due to its dependence on $u\in\Omega$.

Due to the existence of $\rho(t;u)$, there is a $K'_0>0$ such that $\rho(t;u)\leq K'_0$.  Additionally, since $D\ell(x(t;u))$ is bounded, through the triangle inequality, there is a $K'_1>0$ such that, $\|\dot{\rho}(t;u)\| = \|g(\rho(t;u))\|\leq K'_1$.  By this bound, it follows that for each $t_1,t_2\in(\tau_1,\tau_2)$, $\|\rho(t_2;u)-\rho(t_1;u)\|\leq K'_1|t_2-t_1|$.  From this condition and Eq.~(\ref{eq-g_lip}), there is $L_1$ such that $\|\dot{\rho}(t_2;u)-\dot{\rho}(t_1;u)\|\leq L_1|t_2-t_1|$.  Note,
\[
\begin{array}{l}
\ddot{\rho}(t;u) = -F(x(t;u),u(t))^TD_1^2F(x(t;u),u(t))\rho(t;u) \\\hspace{10pt}- D_1F(x(t;u),u(t))^T\dot{\rho}(t;u) - D^2\ell(x(t;u))F(x(t;u),u(t)),
\end{array}
\]
By the bounds on $F(\cdot,\cdot)$, $D_1F(\cdot,\cdot)$, $D_1^2F(\cdot,\cdot)$, and $D^2\ell$, and that $\rho(t;u)$ and $\dot{\rho}(t;u)$ are Lipschitz, for any times $t_1,t_2\in(\tau_1,\tau_2)$, through the triangle inequality, there is $L_2>0$ such that $\|\ddot{\rho}(t_2;u)-\ddot{\rho}(t_1;u)\|\leq L_2|t_2-t_1|$. Finally, 
\begin{equation}
\begin{array}{l}
\ddot{d}_{ab}(t) = \ddot{\rho}(t;u)^T(f_a(x(t;u)) - f_b(x(t;u))) \\\hspace{10pt}+2\dot{\rho}(t;u)^T(Df_a(x(t;u))-Df_b(x(t;u)))\dot{x}(t;u)\\\hspace{10pt}+ \rho(t;u)^T(D^2f_a(x(t;u))-D^2f_b(x(t;u)))\circ(\dot{x}(t;u), \dot{x}(t;u))\\\hspace{10pt}+ \dot{\rho}(t;u)^T(Df_a(x(t;u))-Df_b(x(t;u))) \ddot{x}(t;u)
\end{array}
\label{eq-dab_doubledot}
\end{equation}
where $\circ$ is the bilinear operator of $D^2f_a - D^2f_b$. By the bounds on $f_a$, $f_b$, and their first and second derivatives, the bounds on $\dot{x}$ and $\ddot{x}$, as well as the Lipschitz condition with respect to $t$ of $\rho(t;u)$, $\dot{\rho}(t;u)$ and $\ddot{\rho}(t;u)$ it is the case that $\ddot{d}_{ab}$ is Lipschitz with some constant $L>0$ in the interval $(\tau_1,\tau_2)$.  
\end{proof}

\subsection{Proof of Lemma \ref{lem-gamma0}: $\gamma_0$ \label{app-gamma0}}
\begin{proof}
Choose any time $t\in(T_{i-1},T_i)$, $i\in\{1,\ldots,M\}$.  The active mode at time $t$ is $\sigma_i$. In the switching control representation, the $\sigma_i^\textrm{th}$ index of $u$ is 1, $u_{\sigma_i}(t) = 1$, while each other mode $a\in\{1,\ldots,N\}$, $a\neq \sigma(t)$, is $u_a(t) = 0$. Reusing the double subscript notation, define $u_{a\sigma_i}(t):=u_a(t) - u_{\sigma_i}(t)$, which is $u_{a\sigma_i}(t) = -1$. 

Furthermore, note $d_{\sigma_i}(t)$ is the sensitivity of the cost from inserting the active mode. This insertion is equivalent to switching the mode schedule from $\sigma_i$ to $\sigma_i$ for an infinitesimal period of time, which has no effect to the control or cost. Therefore, the cost is not sensitive to inserting the active mode and as such, $d_{\sigma_i}(t) = 0$, which is also realized from Eq.~(\ref{eq-insertion_gradient}).  Therefore, $d_{a\sigma_i}(t) := d_a(t)-d_{\sigma_i}(t) = d_a(t)$.

Recall the definition of the max-mapping $\mathcal{Q}$ where $\mathcal{Q}_a(u(t)-\gamma d(t)) := \prod^N_{b\neq a}1(u_a(t)-\gamma d_a(t) - (u_b(t)-\gamma d_b(t))) = \prod^N_{b\neq a}1(u_{ab} - \gamma d_{ab})$ for each $a\in\{1,\ldots,N\}$.  At any time $t\in(T_{i-1},T_i)$, $i\in\{1,\ldots,M\}$, for $a$ to be the active mode of $\mathcal{Q}(u(t)-\gamma d(t))$, but not the active mode of $u$---i.e. $a\neq \sigma_i$---it must be the case that $u_{ab} - \gamma d_{ab}> 0$ for each $b\neq a$ including $b = \sigma_i$.  It follows that it is necessary for $u_{a\sigma_i}(t) - \gamma d_{a\sigma_i}(t)>0$ for mode $a$ to be active at time $t$. When $\gamma = 0$, $u_{a\sigma_i} = -1$, and therefore $\mathcal{Q}_a(u(t)) = 0$ for all $a\neq\sigma_i$ and therefore $\mathcal{Q}(u) = u$. 

Assuming $a$ is active for some $\gamma > 0$, then it must be the case that $u_{a\sigma(t)}(t)-\gamma d_{a\sigma(t)}(t) > 0$. Simplifying using $u_{a\sigma_i}(t) = -1$ and $d_{a\sigma_i}(t) = d_a$, it must be the case that 
\begin{equation}
\gamma d_a(t) < -1.
\label{eq-gamma_ineq}
\end{equation}
If $d_a(t)$ is negative, then there is a $\gamma>0$ such that the inequality is true. However, if for each $a\in\{1,\ldots,N\}$, $d_a(t)$ is aways non-negative, then the inequality can never be true and therefore, $\mathcal{Q}(u-\gamma d) = u$ for all $\gamma>0$. 

Assuming there exists an $a\in\{1,\ldots,N\}$ and $t\in[0,T]$ such that $d_a(t)$ is negative consider $\gamma_0$, Eq.~(\ref{eq-gamma0k}). Take a mode $a$ and time $t$ such that $d_a(t) = \theta$, from Eq.~(\ref{eq-thetacalc}). We see $\theta$ is finite since $d_a(t)$ is finite through Assumption~\ref{ass-C2bnd}, and therefore, $\gamma_0 = -\frac{1}{d_a(t)}$ is finite. When $\gamma = \gamma_0^+$, the inequality in Eq.~(\ref{eq-gamma_ineq}) is true and therefore, $u_{a\sigma(t)}(t)-\gamma_0^+ d_{a\sigma(t)}(t) > 0$ and $\mathcal{Q}(u-\gamma_0^+ d)\neq u$. 

Finally, for any $\gamma\in[0,\gamma_0)$, in order for $\mathcal{Q}(u-\gamma d)\neq u$, there must be a mode $b$ and time $\tau$ such that $d_b(\tau) < -1/\gamma$. however, $-1/\gamma < -1/\gamma_0 = \theta$ and by the definition of $\theta$, there is no $d_b(\tau)<\theta$. Due to the contradiction, there is no $\gamma\in[0,\gamma_0)$ such that $\mathcal{Q}(u-\gamma d)\neq u$.
\end{proof}

\subsection{Proof of Lemma \ref{lem-continuity_of_T}: Continuity of switching times \label{app-continuity_of_T}}
\begin{proof}
Assume there is no $\overline{\delta\gamma}$ such that $T_i(\gamma)$ is continuous in $(\gamma_0,\gamma_0+\overline{\delta\gamma})$. Then, for every $\overline{\delta\gamma}\in(0,\delta\gamma)$, there is a $\gamma\in(\gamma_0,\gamma_0+\overline{\delta\gamma})$ such that $DT_i(\gamma)$ does not exist.  According to Lemma~\ref{lem-DTgamma}, $DT_i(\gamma)$ exists when $T_i(\gamma)$ is not a critical point of $\mu_{\sigma_i\sigma_{i+1}}:=u_{\sigma_i\sigma_{i+1}} - \gamma d_{\sigma_i\sigma_{i+1}}$.  Therefore, for any $\delta t\in\setR^+$, there must be a $t\in (T_i(\gamma_0^+),T_i(\gamma_0^+)+\delta t)$ such that $t$ is a critical point of $\mu_{\sigma_i\sigma_{i+1}}$---i.e. there is a $t$ such that $\mu_{\sigma_i\sigma_{i+1}}$ is discontinuous at $t$ or $\dot{d}_{\sigma_i\sigma_{i+1}}(t) = 0$.  The following shows that there are $\delta t$ for which no $\mu_{\sigma_i\sigma_{i+1}}(t)$, $t\in  (T_i(\gamma_0^+),T_i(\gamma_0^+)+\delta t)$ is a critical point and so the assumption that $\overline{\delta\gamma}$ does not exist is false.

By the assumption that $u$ is piecewise constant, there is a $\overline{\delta t}\in\setR\setminus 0$ such that $u(t)$ is constant for $t\in(T_i(\gamma_0^+),T_i(\gamma_0^+)+\overline{\delta t})$, whether $T_i(\gamma_0^+)$ is a point of discontinuity of $u$ or not. Since $u$ is constant in the time interval, by Lemma~\ref{lem-lipschitz}, $\ddot{d}_{\sigma_i\sigma_{i+1}}(t)$ is also Lipschitz in the interval.  

If $m(T_i(\gamma_0^{+})) = 1$, then $\dot{d}_{\sigma_i\sigma_{i+1}}(T_i(\gamma_0))\neq 0$ and by Lipschitz of $\ddot{d}_{\sigma_i\sigma_{i+1}}$, there is a $\delta t\in (0,\overline{\delta t})$ such that for each $t\in(T_i(\gamma_0^+),T_i(\gamma_0^+)+\delta t)$,  $\dot{d}_{\sigma_i\sigma_{i+1}}(t)\neq 0$.  Additionally, by the continuity of $\dot{d}_{\sigma_i\sigma_{i+1}}$ in the time interval, $\dot{d}_{\sigma_i\sigma_{i+1}}(t)$ does not change signs in $(T_i(\gamma_0^+),T_i(\gamma_0^+)+\delta t)$.  Thus, there is a $\delta t$ such that for every $t\in (T_i(\gamma_0^+),T_i(\gamma_0^+)+\delta t)$, $t$ is not a critical point of $\mu_{\sigma_i\sigma_{i+1}}(t)$ and therefore, by contradiction, there is a $\overline{\delta\gamma}\in(0,\delta\gamma]$ such that $T_i:(\gamma_0,\gamma_0+\overline{\delta \gamma})\rightarrow (T_i(\gamma_0^+),T_i(\gamma_0^+)+\delta t)$ is continuous. Furthermore, by referring to Eq.~(\ref{eq-Dtau_cont}), since the sign of $\dot{d}_{\sigma_i\sigma_{i+1}}(t)$, $t\in (T_i(\gamma_0^+),T_i(\gamma_0^+)+\delta t)$, is constant, the sign of $DT_i(\gamma)$, $\gamma \in (\gamma_0,\gamma_0+\overline{\delta \gamma})$, is constant and so $T_i: (\gamma_0,\gamma_0+\overline{\delta \gamma})$ is strictly monotonic.

If $m^k(T_i(\gamma_0^{+})) = 2$, then $\dot{d}_{\sigma_i\sigma_{i+1}}(T_i(\gamma_0^+)) = 0$ but $\ddot{d}_{\sigma_i\sigma_{i+1}}(T_i(\gamma_0^+)) \neq 0$. Since $\ddot{d}_{\sigma_i\sigma_{i+1}}(t)$ is Lipschitz for $t\in(T_i(\gamma_0^+),T_i(\gamma_0^+)+\overline{\delta t})$, there is a $\delta t\in (0,\overline{\delta t})$ such that for each $t\in(T_i(\gamma_0^+),T_i(\gamma_0^+)+\delta t)$, $d_{\sigma_i\sigma_{i+1}}(t)$ and $\dot{d}_{\sigma_i\sigma_{i+1}}(t)$ are strictly monotonic. Consequently, $\dot{d}_{\sigma_i\sigma_{i+1}}(t)\neq 0$ for $t\in(T_i(\gamma_0^+),T_i(\gamma_0^+)+\delta t)$ and thus by contradiction, there is a $\overline{\delta\gamma}\in(0,\delta\gamma]$ such that $T_i:(\gamma_0,\gamma_0+\overline{\delta \gamma})\rightarrow (T_i(\gamma_0^+),T_i(\gamma_0^+)+\delta t)$ is continuous. Furthermore, since $\dot{d}_{\sigma_i\sigma_{i+1}}(T_i(\gamma_0^+)) = 0$ and $\dot{d}_{\sigma_i\sigma_{i+1}}(t)$ in $t\in (T_i(\gamma_0^+),T_i(\gamma_0^+)+\delta t)$ is strictly monotonic, the sign of $\dot{d}_{\sigma_i\sigma_{i+1}}(t)$, $t\in (T_i(\gamma_0^+),T_i(\gamma_0^+)+\delta t)$ is constant. By referring to Eq.~(\ref{eq-Dtau_cont}), the sign of $DT_i(\gamma)$, $\gamma \in (\gamma_0,\gamma_0+\overline{\delta \gamma})$, is constant and so $T_i: (\gamma_0,\gamma_0+\overline{\delta \gamma})$ is strictly monotonic.
\end{proof}

\subsection{Proof of Lemma \ref{lem-d_properties}: Dependence of $d(\cdot)$ on $T_i(\gamma)$
\label{app-d_properties}}
\begin{proof}
Set $\mu = u-\gamma d$. For $T_i(\gamma)$ to be a switching time between modes $\sigma_i$ and $\sigma_{i+1}$, the index of $\mu$ with greatest value must switch from $\sigma_i$ to $\sigma_{i+1}$ at $T_i(\gamma)$. By the definition of the max-mapping $\mathcal{Q}(\cdot)$, Eq.~(\ref{eq-maxProjStep}), $\mu_{\sigma_i\sigma_{i+1}}(T_i(\gamma)^-)$ must be negative while $\mu_{\sigma_i\sigma_{i+1}}(T_i(\gamma)^+)$ must be positive and If $\mu_{\sigma_i\sigma_{i+1}}$ is continuous at $T_i(\gamma)$, then $\mu_{\sigma_i\sigma_{i+1}}(T_i(\gamma)) = 0$. We show $\mu_{\sigma_i\sigma_{i+1}}$ is continuous at $T_i(\gamma)$ through Lemma~\ref{lem-continuity_of_T}.  According to Lemma~\ref{lem-continuity_of_T}, there is a $0<\delta \gamma'\leq\delta \gamma$ such that for $\gamma\in(\gamma_0,\gamma_0+\delta\gamma')$, $T_i(\gamma)$ is continuous and strictly monotonic.  Since $u\in\Omega$ is non-chattering and $d_{\sigma_i\sigma_{i+1}}$ has a finite number of critical points as assumed in Assumption~\ref{ass-v_crit}, there is a $0<\overline{\delta \gamma}_1\leq\delta \gamma_1$ such that for each $\gamma\in(\gamma_0,\gamma_0+\overline{\delta\gamma}_1)$, $u_{\sigma_i\sigma_{i+1}}(T_i(\gamma))$ is constant in $T_i(\gamma)$ and therefore $\ddot{d}(T_i(\gamma))$ is Lipschitz continuous in $T_i(\gamma)$ through Lemma~\ref{lem-lipschitz}. Therefore, $\mu_{\sigma_i\sigma_{i+1}}(T_i(\gamma))$ is continuous in $T_i(\gamma)$ and we conclude
\begin{equation}
u_{\sigma_i\sigma_{i+1}}(T_i(\gamma))-\gamma d_{\sigma_i\sigma_{i+1}}(T_i(\gamma)) = 0.
\label{eq-u_minus_gamma_d}
\end{equation}
Eq.~(\ref{eq-u_minus_gamma_d}) can be simplified depending on whether $\sigma_i$ or $\sigma_{i+1}$ is the active mode of $u$ at $T_i(\gamma)$. If $\sigma_i$ is the active mode, then $u_{\sigma_i}(T_i(\gamma)) = 1$ and $u_{\sigma_{i+1}}(T_i(\gamma)) = 0$ and therefore, $u_{\sigma_i\sigma_{i+1}}(T_i(\gamma)) = 1$. Additionally, referring to Eq.~(\ref{eq-insertion_gradient}), the mode insertion gradient of the active mode has value 0 and so here, $d_{\sigma_i}(T_i(\gamma)) = 0$. Therefore, $d_{\sigma_i\sigma_{i+1}}(T_i(\gamma)) = -d_{\sigma_{i+1}}(T_i(\gamma))$. Similarly, if $\sigma_{i+1}$ is the active mode of $u$ at $T_i(\gamma)$, then $u_{\sigma_i\sigma_{i+1}}(T_i(\gamma)) = -1$ and $d_{\sigma_i\sigma_{i+1}}(T_i(\gamma)) = d_{\sigma_{i}}(T_i(\gamma))$. Plugging into Eq.~(\ref{eq-u_minus_gamma_d}), either
\begin{equation}
-1-\gamma d_{\sigma_{i+1}}(T_i(\gamma)) = 0\textrm{ or } -1-\gamma d_{\sigma_{i}}(T_i(\gamma)) = 0
\label{eq-1_minus_gamma_d_two_cases}
\end{equation}
when $\sigma_i$ or $\sigma_{i+1}$ is the active mode of $u$ at $T_i(\gamma)$ respectively.  Finally, if $T_i(\gamma)$ is increasing in value---i.e. $\omega = 0$---then $\sigma_{i+1}$ is the active mode of $u$ at $T_i(\gamma)$, while if $T_i(\gamma)$ is decreasing in value---i.e. $\omega = 1$---then $\sigma_{i}$ is the active mode of $u$ at $T_i(\gamma)$. Eq.~(\ref{eq-1_minus_gamma_d_two_cases}) reduces to Eq.~(\ref{eq-Tigamma_form}).

To prove point 2 of the Lemma, take the derivative of Eq.~(\ref{eq-Tigamma_form}) with respect to $\gamma$:
\begin{equation}
-d_{\sigma_{i+\omega}}(T_i(\gamma)) -\gamma \dot{d}_{\sigma_{i+\omega}}(T_i(\gamma))DT_i(\gamma) = 0
\label{eq-1_minus_gamma_d_two_cases_dot}
\end{equation}
which is possible since $\ddot{d}_{\sigma_{i+\omega}}(T_i(\gamma))$ is Lipschitz continuous in $T_i(\gamma)$ and $DT_i(\gamma)$ exists through Lemma~\ref{lem-DTgamma}. By the continuity of $d_{\sigma_{i+\omega}}(T_i(\gamma))$ and $\dot{d}_{\sigma_{i+\omega}}(T_i(\gamma))$ in $T_i(\gamma)$, there is a $0<\overline{\delta\gamma}_2\leq\overline{\delta\gamma}_1$ such that for $\gamma \in (\gamma_0,\gamma_0+\overline{\delta\gamma}_2)$, $d_{\sigma_{i+\omega}}(T_i(\gamma))$ is negative and $(-1)^\omega DT_i(\gamma)$ is positive---i.e. $DT_i(\gamma)$ is positive when $\omega = 0$ and negative when $\omega = 1$. Therefore, for the equality in Eq.~(\ref{eq-1_minus_gamma_d_two_cases_dot}) to be true, $(-1)^\omega \dot{d}_{\sigma_{i+\omega}}(T_i(\gamma)) >0$.

The Lemma's point 3 follows from the Mean Value Theorem and the Lemma's point 2. Recall when $m(T_i(\gamma_0^+)) = 0$,  $\dot{d}_{\sigma_{i+\omega}}(T_i(\gamma_0^+)) = 0$, but $\ddot{d}_{\sigma_{i+\omega}}(T_i(\gamma_0^+)) \neq 0$. Since $\ddot{d}_{\sigma_{i+\omega}}(T_i(\gamma))$ is Lipschitz in $T_i(\gamma)$, there is a $0<\overline{\delta\gamma}_3\leq\overline{\delta\gamma}_2$ such that for each $\gamma \in (\gamma_0,\gamma_0+\overline{\delta\gamma}_3)$, $\ddot{d}_{\sigma_{i+\omega}}(T_i(\gamma))\neq 0$. Consider $\dot{d}_{\sigma_{i+\omega}}(T_i(\gamma_0+\overline{\delta\gamma}_3))$.
Through the Mean Value Theorem, there is a $\gamma'$ such that 
\[
\begin{array}{l}
\ddot{d}_{\sigma_{i+\omega}}(T_i(\gamma')) = \frac{\dot{d}_{\sigma_{i+\omega}}(T_i(\gamma_0+\overline{\delta\gamma}_3)) - \dot{d}_{\sigma_{i+\omega}}(T_i(\gamma_0^+))}{T_i(\gamma_0 + \overline{\delta\gamma}_3) - T_i(\gamma_0^+)}\\
\hspace{60pt}
 = \frac{\dot{d}_{\sigma_{i+\omega}}(T_i(\gamma_0+\overline{\delta\gamma}_3)) }{T_i(\gamma_0 + \overline{\delta\gamma}_3) - T_i(\gamma_0^+)}
\end{array}
\]
Since, according to the Lemma's point 2, $(-1)^\omega\dot{d}_{\sigma_{i+\omega}}(T_i(\gamma_0+\overline{\delta\gamma}_3))>0$, and that $(-1)^\omega(T_i(\gamma_0 + \overline{\delta\gamma}_3) - T_i(\gamma_0^+))>0$, $\ddot{d}_{\sigma_{i+\omega}}(T_i(\gamma'))>0$. Since the sign of $\ddot{d}_{\sigma_{i+\omega}}(T_i(\gamma))$ is constant for all $\gamma \in (\gamma_0,\gamma_0+\overline{\delta\gamma}_3)$, including at $T_i(\gamma')$, point 3 is true. Finally, set $\overline{\delta\gamma} = \overline{\delta\gamma}_3$.
\end{proof}

\subsection{Proof of Lemma \ref{lem-STapprox}: Local approximation of switching times \label{app-STapprox}}
\begin{proof}
For $m(T_i(\gamma_0^+)) = 1$, Eq.~(\ref{eq-Ti_approx_mk1}) follows from Taylor expanding $T_i(\gamma)$ around $\gamma_0^+$:
\begin{equation}
T_i(\gamma) = T_i(\gamma_0^+) + DT_i(\gamma_0^+)(\gamma-\gamma_0) + o(\gamma-\gamma_0).
\label{eq-Tigamma_init}
\end{equation}
Since $T_i(\gamma_0^+)$ is not a critical time of $u_{\sigma_i\sigma_{i+1}}-\gamma_0d_{\sigma_i\sigma_{i+1}}$, $DT_i(\gamma_0^+)$ is given in Eq.~(\ref{eq-Dtau_cont}). Furthermore, by the continuity and strict monotonicity of $T_i(\gamma)$ from Lemma~\ref{lem-continuity_of_T} and that $u\in\Omega$ is non-chattering, there is a $\overline{\delta\gamma}\in(0,\delta\gamma]$ such that $u_{\sigma_i\sigma_{i+1}}(T_i(\gamma))$ is constant and equal to $u_{\sigma_i\sigma_{i+!}}(T_i(\gamma_0^+))$. Thus,
\[
DT_i(\gamma_0^+) = \frac{u_{\sigma_i\sigma_{i+1}}(T_i(\gamma_0^+))}{\gamma_0^2\dot{d}_{\sigma_i\sigma_{i+1}}(T_i(\gamma_0^+))}.
\]
When $\omega = 0$, the active mode of $u$ at time $T_i(\gamma_0^+)$ is $\sigma_{i+1}$, so $u_{\sigma_i}(T_i(\gamma_0^+)) = 0$, $u_{\sigma_{i+1}}(T_i(\gamma_0^+)) = 1$, and $d_{\sigma_{i+1}}(T_i(\gamma_0^+)) = 0$. Therefore, $u_{\sigma_i\sigma_{i+1}}(T_i(\gamma_0^+)) = -1$ and $\dot{d}_{\sigma_i\sigma_{i+1}}(T_i(\gamma_0^+)) = \dot{d}_{\sigma_i}(T_i(\gamma_0^+))$. Similarly, when $\omega = 1$, the active mode of $u$ at time $T_i(\gamma_0^+)$ is $\sigma_{i}$, so $u_{\sigma_i}(T_i(\gamma_0^+)) = 1$, $u_{\sigma_{i+1}}(T_i(\gamma_0^+)) = 0$, and $d_{\sigma_{i}}(T_i(\gamma_0^+)) = 0$. Therefore, $u_{\sigma_i\sigma_{i+1}}(T_i(\gamma_0^+)) = 1$ and $\dot{d}_{\sigma_i\sigma_{i+1}}(T_i(\gamma_0^+)) = -\dot{d}_{\sigma_i}(T_i(\gamma_0^+))$. Plugging into $DT_i(\gamma_0^+)$,
\[
DT_i(\gamma_0^+) = -\frac{1}{\gamma_0^2\dot{d}_{\sigma_{i+\omega}}(T_i(\gamma_0^+))}.
\]
Plugging $DT_i(\gamma_0^+)$ into Eq.~(\ref{eq-Tigamma_init}) and setting $\theta = -1/\gamma_0$, Eq.~(\ref{eq-thetacalc}), results in Eq.~(\ref{eq-Ti_approx_mk1}).

For $m(T_i(\gamma_0^+)) = 2$, $\dot{d}_{\sigma_i\sigma_{i+1}}(T_i(\gamma_0^+)) = 0$ and so it is not possible to Taylor expand $T_i(\gamma)$ around $\gamma_0^+$ because $DT_i(\gamma)$, Eq.~(\ref{eq-Dtau_cont}), goes unbounded as $T_i(\gamma)$ approaches $T_i(\gamma_0^+)$. Instead, we start from the switching time equation, Eq.~(\ref{eq-Tigamma_form}) of Lemma~\ref{lem-d_properties}
\[
-1-\gamma d_{\sigma_{i+\omega}}(T_i(\gamma)) = 0.
\]
Through Lemmas~\ref{lem-continuity_of_T}, there exists a $\delta\gamma_1>0$ such that this implicit equation on $T_i(\gamma)$ is true for $\gamma\in(\gamma_0,\gamma_0+\delta\gamma_1)$ and $T_i(\gamma)$ is continuous and strictly monotonic. There is a $\delta t\in\setR/0$ such that $T_i:(\gamma_0,\gamma_0+\delta\gamma_1)\rightarrow (T_i(\gamma_0^+),T_i(\gamma_0^+)+\delta t)$.

Taylor expand $d_{\sigma_{i+\omega}}(T_i(\gamma))$ around $T_i(\gamma_0^{+})$ for $T_i(\gamma) \in (T_i(\gamma_0^+),T_i(\gamma_0^+)+\delta t)$, recalling that $\dot{d}_{\sigma_{i+\omega}}(T_i(\gamma_0^{+})) = 0$:
\[
\begin{array}{l}
d_{\sigma_{i+\omega}}(T_i(\gamma)) = d_{\sigma_{i+\omega}}(T_i(\gamma_0^{+})) + \frac{1}{2}\ddot{d}_{\sigma_{i+\omega}}(T_i(\gamma_0^{+})) \tau(\gamma)^2 \\\hspace{20pt}+ o(\tau(\gamma)^2)
\end{array}
\]
where $\tau(\gamma) = T_i(\gamma) - T_i(\gamma_0^+)$. Plug the expanded $d_{\sigma_{i+\omega}}(T_i(\gamma))$ into Eq.~(\ref{eq-Tigamma_form}) and reorder
\[
\frac{1}{2}\ddot{d}_{\sigma_{i+\omega}}(T_i(\gamma_0^{+}))\tau(\gamma)^2 = \frac{\gamma-\gamma_0}{\gamma\gamma_0} +  o(\tau(\gamma)^2).
\]
Taylor expanding $\frac{\gamma-\gamma_0}{\gamma\gamma_0}$ around $\gamma_0$, 
\begin{equation}
\frac{1}{2}\ddot{d}_{\sigma_{i+\omega}}(T_i(\gamma_0^{+}))\tau(\gamma)^2 = \frac{\gamma-\gamma_0}{\gamma_0^2} +  o(\gamma-\gamma_0) + o(\tau(\gamma)^2).
\label{eq-approx_taylor_form}
\end{equation}
By the Taylor expansion of $d_{\sigma_{i+\omega}}(\cdot)$ around $T_i(\gamma_0^{+})$, $o(\tau(\gamma)^2)$ is of lesser order than $\frac{1}{2}\ddot{d}_{\sigma_{i+\omega}}(T_i(\gamma_0^{+}))\tau(\gamma)^2$.  In order for the equality of Eq. (\ref{eq-approx_taylor_form}) to be true, $o(\tau(\gamma)^2)$ must also be of lesser order than  $\gamma-\gamma_0$.  Therefore, $o(\tau(\gamma)^2) = o(\gamma-\gamma_0)$.  Recall that $\theta = -1/\gamma_0$ and that $\ddot{d}_{\sigma_{i+\omega}}(T_i(\gamma_0^{+}))\neq 0$ since $m(T_i(\gamma_0^+))=2$. Solve for $\tau(\gamma)^2$:
\begin{equation}
\tau(\gamma)^2 = \frac{2\theta^2}{\ddot{d}_{\sigma_{i+\omega}}(T_i(\gamma_0^{+}))}(\gamma-\gamma_0)+ o(\gamma-\gamma_0).
\label{eq-approx_taylor_form_2}
\end{equation}
Set $c = \frac{2\theta^2 }{\ddot{d}_{\sigma_i\sigma_{i+1}}(T_i(\gamma_0^{+}))}$.  There is $0<\delta \gamma_2\leq\delta \gamma_1$ such that for $\gamma\in(\gamma_0,\gamma_0+\delta\gamma_2)$, 
\[
|c(\gamma-\gamma_0)| > o(\gamma-\gamma_0).
\]
Since $\ddot{d}_{\sigma_{i+\omega}}(T_i(\gamma_0^{+}))>0$ due to point 3 of Lemma~\ref{lem-d_properties}, the right side of Eq.~(\ref{eq-approx_taylor_form_2}) has a single positive real valued square root and a single negative real valued square root for each $\gamma\in(\gamma_0,\gamma_0+\delta\gamma_2)$. The switching time $T_i(\gamma)$ corresponds to one of the roots.  All that remains is to show that 
\[
[c(\gamma-\gamma_0)+o(\gamma-\gamma_0)]^\frac{1}{2} = c^\frac{1}{2}(\gamma-\gamma_0)^\frac{1}{2} + o((\gamma-\gamma_0)^\frac{1}{2}).
\]
In other words, we need to show that $[c(\gamma-\gamma_0)+o((\gamma-\gamma_0)]^\frac{1}{2}) - c^\frac{1}{2}(\gamma-\gamma_0)^\frac{1}{2} \in o((\gamma-\gamma_0)^\frac{1}{2})$.  By the definition of $o$, for each $p>0$, there is $0<\overline{\delta\gamma}(p)\leq\delta\gamma_4$ such that for all $0<\delta\gamma<\overline{\delta\gamma}(p)$, 
\[
\begin{array}{l}
[c\delta\gamma+o(\delta\gamma)]^\frac{1}{2} - c^\frac{1}{2}\delta\gamma^\frac{1}{2}<[c\delta\gamma+p\delta\gamma]^\frac{1}{2}-c^\frac{1}{2}\delta\gamma^\frac{1}{2} = [(c+p)^\frac{1}{2} - c^\frac{1}{2}]\delta\gamma^\frac{1}{2}.
\end{array}
\]
Set $p_2 = (c+p)^\frac{1}{2} - c^\frac{1}{2}$ which is zero when $p = 0$.  As such, for all $p_2>0$ and all $\delta\gamma\in\overline{\delta\gamma}(p)$, it is the case that $[c\delta\gamma+o(\delta\gamma)]^\frac{1}{2} - c^\frac{1}{2}\delta\gamma^\frac{1}{2}<p_2\delta\gamma$ and thus the left hand side of the inequality is an element of $o((\gamma-\gamma_0^k)^\frac{1}{2})$.
\end{proof}

\subsection{Proof of Lemma \ref{lem-Japprox}: Local approximation of the Cost \label{app-Japprox}}
\begin{proof}
The first order approximation of $J(\gamma)$ with respect to $\tau(\gamma):=\mathcal{T}(\gamma) - \mathcal{T}(\gamma_0^+)$ is $\tilde{J}(\gamma)$, Eq.~(\ref{eq-Japprox})---i.e. 
\[
J(\gamma) = \tilde{J}(\gamma) + o(|\tau(\gamma)|).
\]
The approximation $\hat{J}(\overline{m};\gamma)$ is a further approximation from $\tilde{J}(\gamma)$, which includes the approximation of $\tau(\gamma)_i := T_i(\gamma)-T_i(\gamma_0)$ using Lemma~\ref{lem-STapprox}.  Consider $\overline{m} = 1$ first.  All switching times in $\mathcal{T}(\gamma_0^+)$ are type-0 or type-1. Set $H = (I_1)^c$ as the complement of $I_1$---i.e. the index set of type-0 switching times.  Each $T_h(\gamma_0^+)$ for $h\in H$ is type-0 and so $\tau(\gamma)_h = 0$.  As for the type-1 switching times, $\tau_i(\gamma)$ is approximated by Eq.~(\ref{eq-Ti_approx_mk1}) where $i\in I_1$. As such, the full vector $\tau(\gamma)$ approximately varies linearly with $\gamma-\gamma_0$ and so $o(|\tau(\gamma)|) = o(\gamma-\gamma_0)$. Plugging $\tau_i(\gamma)$ from Eq.~(\ref{eq-Ti_approx_mk1}) for each $i\in I_1$ into $\tilde{J}(\gamma)$ results in $J(\gamma) = \hat{J}(1;\gamma) + o(\gamma-\gamma_0)$. Therefore, $R(\gamma) = o(\gamma-\gamma_0)$ and $|\hat{J}(1;\gamma)-J(0)| \geq  |R(\gamma)|$.

Now for the case where $\overline{m} = 2$.  First, the approximations of $\tau(\gamma)_i= T_i(\gamma)-T_i(\gamma_0)$ for $i\in\{1,\ldots,M-1\}$ are at least of order $(\gamma-\gamma_0)^\frac{1}{2}$ and thus $o(|\tau(\gamma)|) = o((\gamma-\gamma_0)^\frac{1}{2})$. Second, set $H = (I_2)^c$.  Whether $T_h(\gamma_0^+)$, $h\in H$ is type-0 or 1, $\tau(\gamma)_h$ is at least $o((\gamma-\gamma_0)^\frac{1}{2})$ (see Eq.~(\ref{eq-Ti_approx_mk1})).  Therefore, the $h$ index of the summation in Eq.~(\ref{eq-Japprox}) are  $(-1)^{\omega_h}\theta\tau(\gamma)_h = o(\gamma-\gamma_0)$.  Finally, plugging Eq.~(\ref{eq-Ti_approx_mk2}) in for each $i\in I_2$ into the summation in Eq.~(\ref{eq-Japprox}) results in 
\[
\begin{array}{l}
(-1)^{\omega_i}\theta\tau(\gamma)_i = - \frac{\sqrt{2}(\theta)^2}{\ddot{d}_{\sigma_{i+\omega_i}}(T_i(\gamma_0^+))^{\frac{1}{2}}}(\gamma-\gamma_0)^{\frac{1}{2}} + o((\gamma-\gamma_0)^{\frac{1}{2}}).
\end{array}
\]
Referring to Eq.~(\ref{eq-hatJk2}), $J(\gamma) = \hat{J}(2,\gamma) + R(\gamma)$ where
\[
\begin{array}{l}
R(\gamma) = \displaystyle{\sum_{i\in I(\overline{m})}}o((\gamma-\gamma_0)^\frac{1}{2}) + \displaystyle{\sum_{h\in H}}o(\gamma-\gamma_0)+ o((\gamma-\gamma_0)^\frac{1}{2}) =  o((\gamma-\gamma_0)^\frac{1}{2}).
\end{array}
\]
Since $\hat{J}(2;\gamma)-J(0)$ is not $o((\gamma-\gamma_0)^\frac{1}{2})$, the lemma is proven.
\end{proof}

\subsection{Proof of Lemma \ref{lem-suff_dec}: Sufficient descent \label{app-suff_dec}}
\begin{proof}
Recall from Eqs.~(\ref{eq-thetacalc}) and (\ref{eq-gamma0k}), $\theta^k = -1/\gamma_0^k = d^k_{\sigma_{i+\omega_i}}(T_i(\gamma_0^{k^+}))<0$ for each $i \in I_2^k$.  Also, according to Lemma~\ref{lem-d_properties}, for each $i\in I^k_2$, there is a neighborhood of $\gamma_0^k$ for which $d^k_{\sigma_{i+\omega_i}}(T_i(\gamma))<0$, $(-1)^{\omega_i}\dot{d}^k_{\sigma_{i+\omega_i}}(T_i(\gamma))>0$ and $\ddot{d}^k_{\sigma_{i+\omega_i}}(T_i(\gamma))>0$.
Set
\[
H(\gamma) := -\alpha \sqrt{2}\textrm{card}(I^k_2)\frac{(\theta^k)^2}{\nu^{\frac{1}{2}}}(\gamma-\gamma_0^k)^\frac{1}{2}.
\]
The right hand side of Eq.~(\ref{eq-t2suff_dec}) is greater than $H(\gamma)$ for all $\gamma>\gamma_0^k$ through the definition of $\nu$.  The proof follows by finding the $\gamma\in(\gamma_0^k,\gamma_1^k]$ for which the derivative of left hand side of Eq.~(\ref{eq-t2suff_dec}) is more negative than the derivative of the right hand side.  The derivative of the left hand side is 
\[
DJ^k(\gamma) = \sum_{i\in I^k_2} (-1)^{\omega_i}\frac{d^{k}_{\sigma_{i+\omega_i}}(T_i(\gamma))^3}{\dot{d}^{k}_{\sigma_{i+\omega_i}}(T_i(\gamma))}
\]
which is negative valued.  The derivative of the right hand side is bounded below by $DH(\gamma)$:
\begin{equation}
DH(\gamma) := -\alpha \frac{\sqrt{2}}{2}\textrm{card}(I^k_2)\frac{(\theta^k)^2}{\nu^{\frac{1}{2}}}(\gamma-\gamma_0^k)^{-\frac{1}{2}}.
\label{eq-DH_bnd}
\end{equation}
The rest of the proof shows $DJ^k(\gamma)<DH(\gamma)$ for all $\gamma\in(\gamma_0^k,\hat{\gamma}^k)$.

Set $\tau_i(\gamma) = T_i(\gamma)-T_i(\gamma_0^k)$.  Since $\ddot{d}^k_{\sigma_{i+\omega_i}}(T_i(\gamma))$ is Lipschitz, by the mean value theorem, 
\[
(-1)^{\omega_i}\dot{d}^k_{\sigma_{i+\omega_i}}(T_i(\gamma)) \leq \ddot{d}^k_{\sigma_{i+\omega_i}}(T_i(\gamma_0^k))\tau(\gamma) - L\tau(\gamma)^2.
\]
Therefore, for $\tau_i(\gamma)\leq\tau_{i,max} := \frac{\ddot{d}^k_{\sigma_{i+\omega_i}}(T_i(\gamma_0^k))}{2L}$
\begin{equation}
(-1)^{\omega_i}\dot{d}^k_{\sigma_{i+\omega_i}}(T_i(\gamma)) \leq\frac{3}{2}\ddot{d}^k_{\sigma_{i+\omega_i}}(T_i(\gamma_0^k))\tau_i(\gamma). 
\label{eq-vdot_bnd}
\end{equation}
By Lipschitz, a lower bound of $\ddot{d}^k_{\sigma_{i+\omega_i}}(T_i(\gamma))$ for $\tau_i(\gamma)\leq\tau_{i,max}$ is 
\[
\begin{array}{l}
\ddot{d}^k_{\sigma_{i+\omega_i}}(T_i(\gamma))\geq\ddot{d}^k_{\sigma_{i+\omega_i}}(T_i(\gamma^{k^+}_0)) + L\tau_i(\gamma) \geq \frac{1}{2}\ddot{d}^k_{\sigma_{i+\omega_i}}(T_i(\gamma^{k^+}_0)).
\end{array}
\]
By the Taylor expansion of $-1-\gamma d^k_{\sigma_{i+\omega_i}}(T_i(\gamma))$ around $T_i(\gamma)$, with remainder $r(T_i(\gamma))$,
\[
\frac{-1}{\gamma} + \frac{1}{\gamma_0^k} + \frac{1}{2}r(T_i(\gamma))\tau_i(\gamma)^2 = 0.
\]
For $\tau(\gamma)<\tau_{i,max}$ the lower bound of $\ddot{d}^k_{\sigma_{i+\omega_i}}(T_i(\gamma))$ is also the lower bound of the remainder term.  In other words, $r(T_i(\gamma))>\frac{1}{2}\ddot{d}^k_{\sigma_{i+\omega_i}}(T_i(\gamma^{k^+}_0))$ and thus for $\tau_i(\gamma)<\tau_{i,max}$,
\begin{equation}
\tau_i(\gamma) \geq \frac{-2 \theta^k}{\ddot{d}^{k}_{\sigma_{i+\omega_i}}(T_i(\gamma_0^{k^+}))^\frac{1}{2}}(\gamma-\gamma_0^k)^\frac{1}{2}.
\label{eq-tau_bnd}
\end{equation}
Indeed, for each $i\in I^k_2$ and $\gamma\in(\gamma_0^k,\min\{\gamma_1^k,\gamma_2^k\}]$, the right hand side of Eq.~(\ref{eq-tau_bnd}) is less than or equal to $\tau_{i,max}$.  Plugging $\gamma_2^k$ into the right hand side of Eq.~(\ref{eq-tau_bnd}) reduces to, 
\[
\begin{array}{l}
\displaystyle{\frac{\nu^\frac{3}{2}}{2L\ddot{d}^{k}_{\sigma_{i+\omega_i}}(T_i(\gamma_0^{k^+}))^\frac{1}{2}}}\leq\frac{\nu}{2L}\leq\tau_{i,max}.
\end{array}
\]
Therefore, Eqs (\ref{eq-vdot_bnd}) and (\ref{eq-tau_bnd}) are true for every  $\gamma\in(\gamma_0^k,\min\{\gamma_1^k,\gamma_2^k\}]$.  For these $\gamma$, an upper bound on $(-1)^{\omega_i}\dot{d}^k_{\sigma_{i+\omega_i}}(T_i(\gamma))$ is
\[
(-1)^{\omega_i}\dot{d}^k_{\sigma_{i+\omega_i}}(T_i(\gamma))\leq-3 \theta^k\ddot{d}^{k}_{\sigma_{i+\omega_i}}(T_i(\gamma_0^{k^+}))^\frac{1}{2}(\gamma-\gamma_0^k)^\frac{1}{2}.
\]
Let $\overline{\nu} = \max_{i\in I^k_2} \ddot{d}^k_{\sigma_{i+\omega_i}}(T_i(\gamma_0^{k^+}))$ and $\psi = \overline{\nu}/\nu>1$. Thus, for each $i\in I^k_2$,
\begin{equation}
(-1)^{\omega_i}\dot{d}^k_{\sigma_{i+\omega_i}}(T_i(\gamma))\leq-3 \theta^k(\nu\psi)^\frac{1}{2}(\gamma-\gamma_0^k)^\frac{1}{2}.
\label{eq-vdotgamma_bnd}
\end{equation}
To find an upper bound on $d^k_{\sigma_{i+\omega_i}}(T_i(\gamma))$, integrate Eq.~(\ref{eq-vdot_bnd}) with respect to $\tau_i(\gamma)$.  
\[
\begin{array}{l}
d^k_{\sigma_{i+\omega_i}}(T_i(\gamma)) < \theta^k + \int_0^{\tau_i(\gamma)}\frac{3}{2}\ddot{d}^k_{\sigma_{i+\omega_i}}(T_i(\gamma_0^k))s\:ds\\\hspace{20pt} = \theta^k + \frac{3}{4}\ddot{d}^k_{\sigma_{i+\omega_i}}(T_i(\gamma_0^k))\tau_i(\gamma)^2
\end{array}
\]
Using the bound in Eq.~(\ref{eq-tau_bnd}) and by setting $\beta(\gamma) = 1+3\theta^k(\gamma-\gamma_0^k)$,
\begin{equation}
d^k_{\sigma_{i+\omega_i}}(T_i(\gamma)) \leq \theta^k\beta(\gamma)
\label{eq-vgamma_bnd}
\end{equation}
With the bounds on $d^k_{\sigma_{i+\omega_i}}(T_i(\gamma))$, Eq.~(\ref{eq-vgamma_bnd}), and $(-1)^{\omega_i}\dot{d}^{k}_{\sigma_{i+\omega_i}}(T_i(\gamma))$, Eq.~(\ref{eq-vdotgamma_bnd}), $DJ^k(\gamma)$ is bounded above by
\begin{equation}
DJ^k(\gamma) \leq -\textrm{card}(I^k_2)\frac{\beta(\gamma)^3}{3}\frac{(\theta^k)^2}{(\nu\psi)^\frac{1}{2}}(\gamma-\gamma_0^k)^{-\frac{1}{2}}.
\label{eq-DJ_bnd}
\end{equation}
Comparing Eqs. (\ref{eq-DH_bnd}) and (\ref{eq-DJ_bnd}), 
\[
\beta(\gamma)^3\geq\alpha\frac{3\sqrt{2}}{2}\psi^\frac{1}{2}\geq\alpha\frac{3\sqrt{2}}{2},
\]
implies $DJ^k(\gamma)<DH(\gamma)$, which is valid for every $\gamma\in\min\{\gamma_1^k,\gamma_2^k,\gamma_3^k\} = \hat{\gamma}^k$.  It follows that each $\gamma\in(\gamma_0^k,\hat{\gamma}^k]$ satisfies the sufficient descent condition.  
\end{proof}

\subsection{Proof of Lemma \ref{lem-min_seq}: Infimizing Sequence \label{app-min_seq}}
\begin{proof}
Consider $(x^{k+1},u^{k+1})\in S_2$ which is calculated from $(x^k,u^{k})$ using backtracking so that $u^{k}-\gamma^{k} d^{k}$ satisfies the type-2 sufficient descent condition,  Eq.~(\ref{eq-t2suff_dec}) and set $\nu^{k} := \min_{i\in I^k_2} \ddot{d}^{k}_{\sigma_{i+\omega_i}}(T_i(\gamma_0^{k^+}))$.  The cost difference between switched system trajectories $(x^{k+1},u^{k+1})$ and $(x^k,u^{k})$ is 
\begin{equation}
J(x^k,u^{k})-J(x^{k+1},u^{k+1}) > \alpha \sqrt{2} \textrm{card}(I^k_2)\frac{(\theta^{k})^2}{(\nu^{k})^\frac{1}{2}}(\gamma^{k}-\gamma_0^{k})^\frac{1}{2}.
\label{eq-J_diff}
\end{equation}
Since $S_2$ has infinite cardinality, it is the case that as $k\rightarrow \infty$, the right hand side of Eq.~(\ref{eq-J_diff}) goes to zero.  By Lemma~\ref{lem-suff_dec} and the assumption on $\gamma_1^{k}$, $\gamma_2^{k}$, and $\gamma_3^{k}$, any $\gamma\in(\gamma_0^{k},\min\{\gamma_2^{k},\gamma_3^{k}\}]$, defined in Lemma~\ref{lem-suff_dec}, satisfies the type-2 sufficient descent condition. Let $L$ be the Lipschitz constant of $\ddot{d}^k_{a}(\cdot)$ for each $a\in\{1,\ldots,N\}$ and every $u^k\in\mathcal{S}_2$.  Recall $\gamma^{k} = (\gamma_3^{k}-\gamma_0^{k})\beta^{j^{k}}+\gamma_0^{k}$ is calculated by backtracking and therefore, if $\gamma_3^{k}\leq\gamma_2^{k}$, then $\beta^{j^{k}} = 0$ and $\gamma^{k} = \gamma^{k}_3$. Conversely, suppose $\gamma_2^{k}<\gamma_3^{k}$.  Due to backtracking, it is possible for $\gamma^k=(\gamma_3^{k}-\gamma_0^{k})\beta^{j^{k}}+\gamma_0^{k}<\gamma_2^k$.  If this is the case, then $(\gamma_3^{k}-\gamma_0^{k})\beta^{j^{k}-1}+\gamma_0^{k}>\gamma_2^k$.  Therefore, $\gamma^k$ is in the interval 
\[
\gamma^k\in[(\gamma_2^k-\gamma_0^k)\beta +\gamma_0^k,\gamma_2^k]
\]
and thus 
\begin{equation}
\gamma^k = \gamma_0^k + \psi^k\frac{(\nu^k)^3}{(\theta^k)^216L^2}
\label{eq-gammak_bnd}
\end{equation}
where $\psi^k\in[\beta,1]$.  By assumptions, it must be the case that there are an infinite number of $u^{k+1}$ calculated from $u^k$ where either 1) $\gamma^k = \gamma^k_3$ or 2) $\gamma^k$ is given by Eq.~(\ref{eq-gammak_bnd}).  Since $\lim_{k\rightarrow \infty}J(x^k,u^{k})-J(x^{k+1},u^{k+1}) = 0$, the limit of the right hand side of Eq.~(\ref{eq-J_diff}) goes to zero. If case 1), then
\[
\lim_{k\rightarrow \infty} \alpha\sqrt{2} \textrm{card}(I^k_2)\left(1-\frac{\sqrt[3]{\alpha \frac{3\sqrt{2}}{2}}}{3}\right)^\frac{1}{2}\frac{(\theta^k)^\frac{3}{2}}{(\nu^k)^\frac{1}{2}} = 0.
\]
Since $\nu^k\leq LT$, $\lim_{k\rightarrow \infty} \theta^k = 0$.  Now, if 2), then
\[
\lim_{k\rightarrow \infty} \frac{\alpha\sqrt{2\psi^k}\textrm{card}(I^k_2)}{4L} \theta^k\nu^k = 0.
\]
Since $\nu^k\geq K_2|\theta^k|$ and $\psi^k\geq \beta>0$, once again, $\lim_{k\rightarrow \infty} \theta^k = 0$ and the proof is complete.
\end{proof}

\bibliographystyle{elsarticle-num}
\bibliography{worksbib}

\begin{thebibliography}{10}
\expandafter\ifx\csname url\endcsname\relax
  \def\url#1{\texttt{#1}}\fi
\expandafter\ifx\csname urlprefix\endcsname\relax\def\urlprefix{URL }\fi
\expandafter\ifx\csname href\endcsname\relax
  \def\href#1#2{#2} \def\path#1{#1}\fi

\bibitem{bemporad_borrelli_morari}
A.~Bemporad, F.~Borrelli, M.~Morari, On the optimal control law for linear
  discrete time hybrid systems, Hybrid Systems: Computation and Control (2002)
  105--119.

\bibitem{caines_shaikh_2005}
P.~E. Caines, M.~S. Shaikh, Optimality zone algorithms for hybrid systems
  computation and control: From exponential to linear complexity, IEEE
  Mediterrean Conference on Control and Automation (2005) 1292--1297.

\bibitem{gorges_izak_liu}
D.~Gorges, M.~Izak, S.~Liu, Optimal control and scheduling of switched systems,
  IEEE Transactions on Autmatic Control (2011) 135--140.

\bibitem{seatzu_corona_etal}
C.~Seatzu, D.~Corona, A.~Giua, A.~Bemporad, Optimal control of continuous-time
  switched affine systems, IEEE Transactions on Automatic Control 51 (2006)
  726--741.

\bibitem{caldwell_murphey_acc12}
T.~M. Caldwell, T.~D. Murphey, Projection-based switched system optimization,
  American Control Conference (2012) 4552 -- 4557.

\bibitem{caldwell_murphey_cdc12}
T.~M. Caldwell, T.~D. Murphey, Projection-based switched system optimization:
  Absolute continuity of the line search, IEEE Conference on Decision and
  Control (2012) 700--706.

\bibitem{caldwell_murphey_cdc13}
T.~M. Caldwell, T.~D. Murphey, Projection-based optimal mode scheduling, IEEE
  Conference on Decision and Control (2013) 5307--5314.

\bibitem{bengea_decarlo}
S.~C. Bengea, R.~A. DeCarlo, Optimal control of switching systems, Automatica
  41 (2005) 11--27.

\bibitem{vasudevan_etal}
R.~Vasudevan, H.~Gonzalez, R.~Bajcsy, S.~S. Sastry, Consistent approximations
  for the optimal control of constrained switched systems---part 1: A
  conceptual algorithm, SIAM Journal on Control and Optimization 51 (2013)
  4463--4483.

\bibitem{axelsson_wardi_et_al}
H.~Axelsson, Y.~Wardi, M.~Egerstedt, E.~I. Verriest, Gradient descent approach
  to optimal mode scheduling in hybrid dynamical systems, Journal of
  Optimization Theory and Applications 136 (2008) 167--186.

\bibitem{egerstedt_wardi_axelsson}
M.~Egerstedt, Y.~Wardi, H.~Axelsson, Transition-time optimization for
  switched-mode dynamical systems, IEEE Transactions on Automatic Control 51
  (2006) 110--115.

\bibitem{gonzalez_vasudevan_etal}
H.~Gonzalez, R.~Vasudevan, M.~Kamgarpour, S.~S. Sastry, R.~Bajcsy, C.~J.
  Tomlin, A descent algorithm for the optimal control of constrained nonlinear
  switched dynamical systems, Hybrid Systems: Computation and Control 13 (2010)
  51--60.

\bibitem{wardi_egerstedt_hale}
Y.~Wardi, M.~Egerstedt, M.~Hale, Switched-mode systems: gradient-descent
  algorithms with {A}rmijo step sizes, Discrete Event Dynamic Systems (2014)
  1--29.

\bibitem{wardi_egerstedt_acc12}
Y.~Wardi, M.~Egerstedt, Algorithm for optimal mode scheduling in switched
  systems, American Control Conference (2012) 4546--4551.

\bibitem{nocedal_wright}
J.~Nocedal, S.~J. Wright, Numerical Optimization, Springer, 2006.

\bibitem{hauser}
J.~Hauser, A projection operator approach to the optimization of trajectory
  functionals, IFAC World Congress.

\bibitem{hausermeyer}
J.~Hauser, D.~G. Meyer, The trajectory manifold of a nonlinear control system,
  IEEE Conference on Decision and Control 1 (1998) 1034--1039.

\bibitem{christie}
R.~Christie, 118 bus power flow test case,
  \url{http://www.ee.washington.edu/research/pstca/pf118/pg_tca118bus.htm}
  (1993).

\bibitem{caldwell_murphey_journal}
T.~M. Caldwell, T.~D. Murphey, Switching mode generation and optimal estimation
  with application to skid-steering, Automatica 47 (2011) 50--64.

\bibitem{johnson_murphey_tac}
E.~R. Johnson, T.~D. Murphey, Second-order switching time optimization for
  non-linear time-varying dynamic systems, IEEE Transactions on Automatic
  Control 56~(8) (2011) 1953--1957.

\bibitem{xu_antsaklis_2}
X.~Xu, P.~J. Antsaklis, Optimal control of switched systems via non-linear
  optimization based on direct differentiations of value functions,
  International Journal of Control 75 (2002) 1406 -- 1426.

\bibitem{flasskamp_murphey_oberblobaum}
K.~Fla{\ss}kamp, T.~D. Murphey, S.~Ober-Bl{\"o}baum, Switching time
  optimization in discretized hybrid dynamical systems, IEEE Conference on
  Decision and Control (2012) 707--712.

\bibitem{khalil}
H.~K. Khalil, Nonlinear systems, Vol.~3, Prentice-hall, Inc., 2002.

\bibitem{armijo}
L.~Armijo, Minimization of functions having lipschitz continuous first-partial
  derivatives, Pacific Journal of Mathematics 16 (1966) 1--3.

\bibitem{kelley}
C.~T. Kelley, Iterative Methods for Optimization, Society for Industrial and
  Applied Mathematics, 1999.

\bibitem{riedinger_kratz_etal}
P.~Riedinger, F.~Kratz, C.~Iung, C.~Zanne, Linear quadratic optimization for
  hybrid systems, IEEE Conference on Decision and Control 38 (1999) 3059--3064.

\bibitem{riedinger_iung_etal}
P.~Riedinger, C.~Iung, F.~Kratz, An optimal control approach for hybrid
  systems, European Journal of Control 9 (2003) 449--458.

\bibitem{polak_wardi}
E.~Polak, Y.~Wardi, A study of minimizing sequences, SIAM Journal on Control
  and Optimization (1984) 599--609.

\bibitem{zeidler}
E.~Zeidler, Applied Functional Analysis: Applications to Mathematical Physics,
  Springer, 1995.

\bibitem{lemarechal}
C.~Lemarechal, A view of line-searches, Optimization and Optimal Control 30
  (1981) 59--78.

\bibitem{grainger_stevenson}
J.~Grainger, W.~S. Jr., Power System Analysis, McGraw-Hill Science, 1994.

\end{thebibliography}

\end{document}